  \tikzstyle{vertex} = [circle]
  \tikzstyle{edge} = [draw,thick,->]
  \tikzstyle{Redge} = [draw,thick,->,red]
  \tikzstyle{Gedge} = [draw,thick,->,green]
  \tikzstyle{Bedge} = [draw,thick,->,blue]
  \tikzstyle{arc} = [draw,dashed,-]
  \tikzstyle{label} = [font=\normalsize,rectangle,fill = white, text = red, fill opacity = 1, text opacity = 1, scale = 0.88]
\let\bbordermatrix\bordermatrix
\patchcmd{\bbordermatrix}{8.75}{4.75}{}{}
\patchcmd{\bbordermatrix}{\left(}{\left[}{}{}
\patchcmd{\bbordermatrix}{\right)}{\right]}{}{}
\newcommand{\bmtx}{\bbordermatrix}  
\newtheorem{theorem}{Theorem}[section]
\newtheorem{lem}[theorem]{Lemma}
\newtheorem{thm}[theorem]{Theorem}
\theoremstyle{definition}
\newtheorem{definition}[theorem]{Definition}
\newtheorem{ex}[theorem]{Example}
\newtheorem{cor}[theorem]{Corollary}
\newtheorem{rem}[theorem]{Remark}
\numberwithin{equation}{section}
\newcommand\g{{\mathfrak g}}
\newcommand\s{{\mathfrak s}}
\renewcommand\l{{\mathfrak l}}
\newcommand{\mtx}[1]{\left[\,\begin{matrix} #1\end{matrix}\,\right ]}       
\newcommand{\smtx}[1]{\left[\begin{smallmatrix} #1\end{smallmatrix}\right ]}    
\newcommand{\rank}{\operatorname{rank}\;}       
\newcommand{\footer}[1]{{\def\thefootnote{}\footnotetext{#1}}}
\newcommand\R{\mathbb R}
\newcommand\C{\mathbb C}
\newcommand\N{\mathbb N}
\newcommand\Z{\mathbb Z}
\newcommand\F{\mathbb F}
\newcommand\diag{{\operatorname{diag}\,}}
\renewcommand\O{{\operatorname{O}}}
\newcommand\Lra {\Longrightarrow}
\newcommand\lra {\longrightarrow}
\newcommand\ra {\rightarrow}
\newcommand{\ul}[1]{\underline{#1}} 
\newcommand{\wt}[1]{\widetilde{#1}}
\newcommand{\mc}{\mathcal}
\newcommand{\Bel}{Belitski\u{\i}}
\newcommand{\Bform}{the Belitski\u{\i}'s canonical form}
\newcommand{\bform}{Belitski\u{\i}'s canonical form}
\newcommand{\Balg}{the Belitski\u{\i}'s algorithm}
\newcommand{\Border}{the Belitski\u{\i}'s order}
\newcommand{\beq}{\begin{equation}}
\newcommand{\eeq}{\end{equation}}
\newcommand{\BEQ}{\begin{equation*}}
\newcommand{\EEQ}{\end{equation*}}
\newcommand{\SIM}[1]{\overset{#1}{\sim}}
\newcommand{\red}[1]{{\color{red} #1 }}
\newcommand{\mpage}[2]{ \begin{minipage}{#1\textwidth} #2 \end{minipage}}  
\newcommand{\twopage}[4]{\mpage{#1}{#3}\mpage{#2}{#4}}
\begin{document}

\title{On Triangular similarity of   nilpotent  Triangular matrices}

\author[M. Tsai] {Ming-Cheng Tsai}
\address{General Education Center, 
Taipei University of Technology, Taipei 10608, Taiwan}
\email{mctsai2@mail.ntut.edu.tw}

\author[M. Bogale] {Meaza Bogale}
\address{Department of Mathematics and Statistics, Auburn University,
AL 36849--5310, USA}
 \email{mfb0012@tigermail.auburn.edu}

\author[H. Huang] {Huajun Huang*} 
\address{Department of Mathematics and Statistics, Auburn University,
AL 36849--5310, USA}
  \email{huanghu@auburn.edu}

\begin{abstract} Let $B_n$ (resp. $U_n$, $N_n$) be the set of $n\times n$ nonsingular (resp. unit, nilpotent) upper triangular matrices.  We use a novel approach to 
explore the $B_n$-similarity orbits in $N_n$. 
The \bform \ of $A\in N_n$ under $B_n$-similarity is in $QU_n$ where $Q$ is the subpermutation such that $A\in B_n QB_n$. Using graph representations and $U_n$-similarity actions stabilzing $QU_n$, we obtain new properties of \Bform s and present an efficient  algorithm  to find \Bform s in $N_n$. As consequences, we construct new \bform s  in all $N_n$'s,
list all \bform s  for $n=7, 8$, and show examples of 3-nilpotent
\bform s  in $N_n$ with arbitrary numbers of parameters up to $\O(n^2)$.
\end{abstract}

\keywords{upper triangular similarity, nilpotent triangular matrices, \Bel's canonical form, \Bel's algorithm, graph operation}

\footer{Mathematics Subject Classification 2020: Primary 15A21, Secondary 15A23, 14L30.}

\maketitle

\section{Introduction}

Let $\F$ be a fixed field. Let $M_{m,n}$ (resp. $M_{n}$, $GL_{n}$) be the set of $m\times n$ (resp. $n\times n$,  $n\times n$ nonsingular) matrices over $\F$.
Let $B_n$ (resp. $U_n$, $N_n$) be the set of $n\times n$ nonsingular (resp. unit, nilpotent) upper triangular matrices, and $D_n$ the set of $n\times n$ nonsingular diagonal matrices, over $\F$.

The main goal of this paper is to describe the $B_n$-similarity orbits in $N_n$ through  \Bform s. 
We link a $B_n$-similarity orbit  to the corresponding $(B_n,B_n)$ double coset. 
Given $A\in N_n$, let $Q$ be the unique subpermutation such that $A\in B_n QB_n$.
The \bform \ of $A$ under $B_n$-similarity is in $QU_n$. We  improve \Balg\ to efficiently search for \Bform s using  graph representations  and graph operations on matrices in $QU_n$. As a consequence, all indecomposable \bform s for $n=7$ and $n=8$ are given,
which extends the works of D. Kobal \cite{Kobal2005} and Y. Chen et al \cite{Chen2016}. 
Moreover, we discover a   way to  obtain new indecomposable \bform s of any order $n$; we also present examples of $3$-nilpotent \bform s in $N_n$ with arbitrary number of  parameters up to $\O(n^2)$, which improves the $\O(n)$ result in \cite{Chen2016}.

The $B_n $-similarity orbits in $N_n $ is a special case of  the
 $\Lambda$-similarity matrix problem explored by V.  Sergeichuk in \cite{Sergeichuk2000}. Sergeichuk showed how the  $\Lambda$-similarity can be used to formulate the representations of quivers and  matrix problems \cite[Examples 1.1, 1.2]{Sergeichuk2000}, and presented \Balg \ to obtain  so called \Bform \ under  $\Lambda$-similarity. The strengthen Tame-Wild theorem for matrix problem (\cite[Theorem 3.1]{Sergeichuk2000}) and the existing classification on \Bform  s with two parameters  \cite{Chen2016} indicate that
the $B_n $-similarity problem on $N_n $ is of wild type. 
 
In 1978, M. Roitman discovered that if $\F$ is infinite,  the number of $B_n $-similarity orbits in $N_n $ is infinite for $n\ge 12$ \cite{Roitman1978}. 
D. Djokovi\'{c}  and J. Malzan improved the result to $n\ge 6$ in 1980 \cite{Djokovic1980}. 
D. Kobal   in 2005 listed all \bform s of the  $B_n$-similarity orbits  in $N_n$ for $n\le 5$  \cite{Kobal2005}. 
P. Thijsse  showed in 1997 that every upper triangular matrix  is $B_n $-similar to a generalized direct sum of irreducible blocks, and gave a  classification
of indecomposable (non-\Bel's) canonical forms    for $n\le 6$  \cite{Thijsse1997}. 
Besides,  Thijsse  showed that if an upper triangular matrix $A$   is nonderogatory or $A$ has Jordan block sizes no more than 2, then $A$ is $B_n $-similar to a generalized Jordan canonical form. 
In 2016, Y. Chen et al classified the indecomposable \bform s  for $n=6$  and for $n=7$ which admits   a parameter, and showed that   there exists an indecomposable \bform\   which admits
at least $\lfloor\frac{n}{2}\rfloor-2$ parameters   \cite{Chen2016}. 

When $\F=\C$ or $\R$, 
the  conjugacy orbits on nilpotent matrices or Lie algebra elements  were also  intensively investigated by Lie theorists and representation theorists. 
In the book \cite{Collingwood1993} of D. Collingwood and W. McGovern,  nilpotent $G$-orbits in semisimple Lie algebras $\g$  are bijectively corresponding to 
the $G$-orbits of the standard $\s\l_2$-triples, and are parameterized by weighted Dynkin diagrams. 
L. Fresse gave sufficient and necessary conditions for the intersection of  a nilpotent $GL_n$-orbit with $N_n$ to be a union of 
finitely many $B_n$-orbits \cite{Fresse2013}. 
A. Melnikov   described the $B_n$-orbits and their geometry on upper triangular $2$-nilpotent matrices  by link patterns in \cite{Melnikov2000, Melnikov2006, Melnikov2013}.  M. Boos and M. Reineke described the $B_n$-orbits and their  closure relations of all $2$-nilpotent matrices   \cite{Boos2012}. 
N. Barnea and  A. Melnikov described the Borel orbits of $2$-nilpotent elements in nilradicals for the symplectic algebra in 2017 \cite{Barnea2017}. 
M. Boos et al described the parabolic orbits of $2$-nilpotent  elements for classical groups  \cite{Boos2014, Boos2019}. 

The structure of this paper is as follows.
 
In Section \ref{sect: preliminary}, we review the classification and invariants of $(B_n,B_n)$ double cosets and \Balg\ for the $B_n$-similarity.
We show that \Bform\ of $A\in N_n$ is necessarily in $QU_n$ in which $Q$ is the subpermutation such that $A\in B_nQB_n$ (Theorem \ref{thm: Bform in right coset}). As a by product,   
we can construct  new \bform s $\smtx{A_1 &Q_{12}\\&A_2}$ when $A_1\in Q_1 U_{p}$ and $A_2\in Q_2U_{q}$ are \bform s and $\smtx{Q_1 &Q_{12}\\&Q_2}$ is a subpermutation in $N_{p+q}$ (Theorem \ref{thm: combine Bforms}). The criteria for $D_n$-similarity  is given in Theorem \ref{thm: D_n similarity}.  
Finally, every matrix in $B_nQB_n$ for a subpermutation $Q\in N_n$ can be transformed  via $B_n$-similarity to a matrix in $QU_n$, and this matrix can be transformed via
 elementary $U_n$-similarity operations (ESOs) stablizing $QU_n$ to  a matrix which is $D_n$-similar to \Bform\ (Theorem \ref{thm: U_n similar to Bform}). 

Section \ref{sect: graph operations} introduces the graph representations of matrices, and the graph operations corresponding to ESOs stabilizing $QU_n$. The graph operations visualize the 
$U_n$-similarity reduction process on $QU_n$ and help obtain \Bform s efficiently. 

Section \ref{sect: Bform properties} is devoted to explore the properties of \Bform\ through its graph. The graph of a \bform\ in $N_n$ with $m$ connected components and $N$ arcs has exactly $m$ indecomposable components and $N-n+m$ parameters (Theorem \ref{thm: Bform number of parameters}). 
Theorem \ref{thm: Bform places of parameters} determines the places of parameters in a \bform. 
Theorems \ref{thm: Bform no arcs} and \ref{thm: long chain deletion} prove that some entries in a \bform\ must be zero, and Theorem \ref{thm: Bform arcs} describes the possible places of nonzero entries. 
Finally, Theorem \ref{thm: Bform parameters} constructs 
indecomposable $3$-nilpotent \bform s with $r$ parameters 
for all $r\le \frac{1}{2} \lfloor\frac{n-2}{3}\rfloor(\lfloor\frac{n-2}{3}\rfloor-1)$ if $n\equiv 0, 2\mod 3$,
and $r\le \frac{1}{2} \lfloor\frac{n-2}{3}\rfloor(\lfloor\frac{n-2}{3}\rfloor-1)-1$ if $n\equiv 1\mod 3$. 

In Section \ref{sect: search Bforms},  we give an efficient graphical algorithm  to search for \Bform s based on Theorems \ref{thm: Bform arcs}  and \ref{thm: long chain deletion}. The algorithm significantly improves \Balg.
The indecomposable \bform s for $n=7$ is given in Theorem \ref{thm: Bform n=7}, and those for $n=8$ is given in Theorem \ref{thm: Bform n=8} and the Appendix. 
Examples of the algorithm, graph illustrations of Theorem \ref{thm: combine Bforms}, and connections to the $B_n$-similarity orbits of upper triangular matrices are also included
in this section.  

\section{Preliminary}\label{sect: preliminary}

\subsection{$B_n \times B_n $ action on $N_n $}

Given a subgroup $G$ of $GL_n$,
two matrices $A, C\in M_n$ are called {\em $G$-similar}, 
denoted by  $A\overset{G}{\sim} C$, 
if there exists  $B\in G$ such that $C=BAB^{-1}$. 
The   $A$ and $C$ are in the same $(B_n,B_n)$ double coset if there exist $B, B'\in B_n$ such that $C=BAB'$. 
The $B_n $-similarity orbit of $A\in M_n $ is contained in
the $(B_n , B_n )$ double coset of $A$:  
$$
\{BAB^{-1}\mid B\in B_n \}   \subseteq   B_n AB_n:= \{B AB' \mid B, B' \in B_n \}.
$$
The $(B_n , B_n )$ double cosets on $M_n $ are well classified  as an extension of both the Bruhat decomposition in semisimple Lie groups and  Gelfand-Naimark decomposition in matrix theory. We review the results here. 

\begin{definition}
A matrix $Q\in M_{m,n}$ is called a {\em subpermutation} if each of the rows and columns of $Q$ has at most one nonzero entry, which equals $1$. 
\end{definition}

Let $[n]:=\{1,2,\ldots,n\}$. 
Given $i,j\in [n]$, let $E_{i,j}^{(n)}\in M_n$ (or $E_{ij}^{(n)}$ for simplicity) be the matrix that has  1 on the $(i,j)$ entry and 0's elsewhere, and let $e_i^{(n)}\in\F^{n}$ be the vector that has 1 on the $i$th entry and 0's elsewhere. They are abbreviated as $E_{i,j}$ (or $E_{ij}$ for simplicity) and $e_i$, respectively, if the size $n$ is clear. 
Every subpermutation $Q\in M_n$ can be determined by a bijective map $\sigma: I\to \sigma(I)$ between two subsets $I$ and $\sigma(I)$ of $[n]$ of the same cardinality:
\begin{equation}\label{subpermutation}
Q=\sum_{i\in I} E_{i,\sigma(i)};\qquad
Q:=0\text{ \  if \ }I=\emptyset.
\end{equation}

Given $A\in M_n $ and $I,J\subseteq [n]$, let $A[I,J]$ denote the submatrix of $A$ with rows indexed by $I$ and columns indexed by $J$.  
Moreover, given $i, j\in [n]$, let 
\begin{equation}
r^{i,j}(A):= \rank A[[n]\setminus[n-i], [j]]=\rank A[\{n-i+1,\ldots,n\}, \{1,\ldots,j\}]
\end{equation}
be the rank of the  lower left $i\times j$ submatrix of $A$; define $r^{0,j}(A)=r^{i,0}(A):=0$. 

The following characterization of $(B_n,B_n)$ double cosets on $M_n$ is classical. Analogic double coset results on $GL_n$ can be found in \cite[Theorem 3.5.14]{Horn2013}.

\begin{lem}\label{thm: double coset}
The $(B_n, B_n)$ double coset of $A\in M_n$ is completely determined by the set of invariants:
\begin{equation}
 \{r^{i,j}(A): i,j\in [n]\}.
\end{equation}
There is a unique subpermutation $Q\in M_n$ such that $A\in B_n Q B_n$. The  entries of $Q=[q_{ij}]$ are determined by:
\begin{equation}\label{rank to subpermutation entries}
q_{n-i+1,j}=r^{i,j}(A)-r^{i-1,j}(A)-r^{i,j-1}(A)+r^{i-1,j-1}(A),\quad i,j\in [n]. 
\end{equation}
\end{lem}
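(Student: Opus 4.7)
The plan is to split Lemma~\ref{thm: double coset} into two pieces: (i) the numbers $r^{i,j}(A)$ are constant on $(B_n,B_n)$ double cosets, and (ii) every such double coset contains a unique subpermutation, whose entries are recovered from the $r^{i,j}$ by the inclusion-exclusion formula \eqref{rank to subpermutation entries}.

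For (i), I would exploit the block structure of $B_n$. If $B\in B_n$, then the restriction of $B$ to its bottom-right $i\times i$ block is again an invertible upper triangular matrix, and the bottom $i$ rows of $BA$ are precisely the image of the bottom $i$ rows of $A$ under this invertible $i\times i$ transformation. Consequently $r^{i,j}(BA)=r^{i,j}(A)$. Symmetrically, right multiplication by $B'\in B_n$ acts on the leftmost $j$ columns through the invertible top-left $j\times j$ block of $B'$, preserving the same rank. Thus all $r^{i,j}$ are $B_n\times B_n$-invariants.

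For (ii), I would argue existence of a subpermutation in the double coset of any $A$ by an explicit pivoting reduction: scan the columns of $A$ from left to right, and in the first nonzero column pick the bottommost nonzero entry, say at position $(r_1,c_1)$. A diagonal scaling (inside $B_n$) rescales this entry to $1$; row operations of left $B_n$-type (adding multiples of row $r_1$ to rows above) clear the rest of column $c_1$; column operations of right $B_n$-type (adding multiples of column $c_1$ to columns further right) clear the rest of row $r_1$. Iterating on the remaining columns produces a subpermutation $Q$ with $A\in B_n Q B_n$. For uniqueness of $Q$ and for the formula itself, I would observe directly that if $Q=[q_{ij}]$ is a subpermutation then $r^{i,j}(Q)$ is just the number of $1$'s in the bottom-left $i\times j$ block, so a two-variable inclusion-exclusion isolates the single entry $q_{n-i+1,j}$ and yields \eqref{rank to subpermutation entries}. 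Combined with (i), any two subpermutations in the same double coset must have identical entries, giving uniqueness.

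The main obstacle, I expect, is verifying that the pivoting does not corrupt earlier pivots. The invariant to track is that once a pivot is placed at $(r,c)$, all subsequent operations preserve it: later column operations only alter columns strictly right of $c$, and later row operations add multiples of some row $r'$ whose only nonzero entry lies in a column $c'>c$, so the $(r,c)$-entry as well as the cleared zeros in row $r$ to its right remain intact. Once this non-interference is confirmed, existence, uniqueness, and the entry formula all follow.
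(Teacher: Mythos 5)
Your proposal is correct and follows essentially the same route as the paper: invariance of the $r^{i,j}$ under the $B_n\times B_n$ action via the triangular block structure, an explicit pivoting reduction inside the double coset to a subpermutation, and recovery of the entries of $Q$ from the invariants by inclusion-exclusion, which yields uniqueness. The only differences are cosmetic: you pivot column-by-column (bottommost entry of the first nonzero column) where the paper works row-by-row from the bottom (leftmost entry of each row), and you verify \eqref{rank to subpermutation entries} by directly counting the $1$'s in the lower-left blocks instead of the paper's case analysis of the block forms of $Q$.
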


\begin{proof}
Given arbitrary $B, B'\in B_n$ and $i,j\in [n]$, we look at $BAB'$ from the following partitions: 
\begin{eqnarray*}
BAB' &=& \bmtx{&\small{n-i} &i \cr n-i &B_{11} &B_{12}\cr i &0 &B_{22}}\ \bmtx{ &j &n-j\cr n-i &A_{11} &A_{12}\cr i &A_{21} &A_{22}}\ \bmtx{&j &n-j\cr j &B_{11}' &B_{12}'\cr n-j &0 &B_{22}'}
\\
&=& \bmtx{&j &n-j\cr n-i &\star &\star \cr i &B_{22}A_{21}B_{11}' &\star}.
\end{eqnarray*}
Both $B_{22}\in M_{i}$ and $B_{11}'\in M_{j}$ are nonsingular. Therefore, 
$r^{i,j} (BAB') =r^{i,j}(A).$

Next, we illustrate how to transform  $A=[a_{ij}]\in M_n$ to a subpermutation $Q$ through elemantary row and column operations associated with  muliplications of matrices in $B_n$. 
\begin{enumerate}
\item
Start from the last row of $A$. If it is a zero row, we are done for the row. Otherwise, let $\sigma(n)\in [n]$ such that $a_{n \sigma(n)}$ is the first nonzero entry of the row. For each $j \in [n]\setminus[\sigma(n)]$, add a multiple $(-a_{nj}/a_{n\sigma(n)})$ of   the  $\sigma(n)$th column of $A$ to the $j$th column of $A$. These elementary column operations   result in multiplying $A$ from the right by a matrix $B_{(1)}'\in B_n$. Denote $A_1'=AB_{(1)}'$.  Then for each $i\in [n-1]$, 
add a multiple $(-a_{i\sigma(n)}/a_{n\sigma(n)})$  of the $n$th row of $A_1'$  to the $i$th row of $A_1'$. These elementary row operations result in 
multiplying $A_1'$ from the left by a matrix $B_{(1)}\in B_n$. Denote a new matrix $A_1=[a_{ij}^{(1)}]=B_{(1)}A_1'=B_{(1)}AB_{(1)}'.$ Then $a_{n\sigma(n)}^{(1)}=a_{n\sigma(n)}$ is the only nonzero entry of its row and column in $A_1$.

\item
Repeat the same strategy on the other rows of the new matrix in the reversing row order until all rows are done. 
\end{enumerate}
The above process produces a matrix $Q '=B_*AB_* ''$ in which each of the rows and columns has at most one nonzero entry. By multiplying an appropriate nonsingular diagonal matrix $D'$ from the right, we get   a subpermutation $Q=B_*AB_*''D'=B_*AB_*'$ for some $B_*, B_* '\in B_n$.  

Clearly, $r^{i,j}(Q)=r^{i,j}(A)$ for $i,j\in [n]\cup\{0\}$. Moveover, given $i,j\in[n]$, 
$Q[[n]\setminus [n-i], [j]]$  has exactly one of the following forms ($k\in [j-1]$, $l\in [i-1]$):
\begin{align*}
\mtx{0 &1\\ Q[[n]\setminus [n-i+1], [j-1]] &0}, &\quad &
\mtx{(e_k^{(j-1)})^T &0\\ Q[[n]\setminus [n-i+1], [j-1]] &0},
\\
\mtx{0 &0\\ Q[[n]\setminus [n-i+1], [j-1]] &e_l^{(i-1)}}, &\quad &
\mtx{0 &0\\ Q[[n]\setminus [n-i+1], [j-1]] &0}.
\end{align*}
In all cases, the entries of subpermutation $Q=[q_{ij}]$ can be obtained by:
\begin{equation}
q_{n-i+1,j}=r^{i,j}(Q)-r^{i-1,j}(Q)-r^{i,j-1}(Q)+r^{i-1,j-1}(Q),\quad i,j\in [n]. 
\end{equation}
Therefore, the set of invariants  $\{r^{i,j}(A): i,j\in [n]\}$ completely determines the unique subpermutation $Q$ and  the corresponding  $(B_n, B_n)$ double coset of $A$. 
\end{proof}

If two  matrices are similar and in the same $(B_n, B_n)$ double coset,
are they necessarily $B_n$-similar? The answer is no.  

\begin{ex} 
Let $A=\smtx{0 & 1 & 1 & 0 & 0 \cr  & 0 &0 &0 & 1 \cr  &  & 0 & 1 & 0 \cr   &  &  & 0 & 0 \cr  &  &  &  & 0 \cr}$ and $B=\smtx{0 & 1 & 0 & 0 & 0 \cr  & 0 &0 &0 & 1 \cr  &  & 0 & 1 & 0 \cr   &  &  & 0 & 0 \cr  &  &  &  & 0 \cr}$. Both $A$ and $B$ have the only eigenvalue 0, and $\rank (A^m)=\rank (B^m)$ for all $m\in\Z^+$. So $A$ and $B$ 
are similar. They are also in the same $(B_n, B_n)$  double coset represented by the subpermutation $B$.  However, $A$ and $B$ are not $B_n$-similar \cite[Theorem 2]{Kobal2005}. 
\end{ex}

The $(B_n, B_n)$ double coset provides a  good direction to explore the  $B_n$-similarity orbits it includes. 
Suppose $A=BQ B'\in N_n$ where $B, B'\in B_n$ and $Q\in N_n$ is a subpermutation. Then $A\overset{B_n}{\sim} Q B'B$. 
Write $B'B=DU$ for $D\in D_n$ and $U\in U_n$. 
Since $Q\in N_n$, there exists $D'\in D_n$ such that $D'QD(D')^{-1}=Q$. Then 
$$A\SIM{B_n} QB'B=QDU\SIM{D_n} D' QDU (D')^{-1}=QD'U(D')^{-1}\in QU_n.$$
The coset $Q U_n$ takes the following form.

\begin{lem}\label{thm: subperm coset}
Suppose $Q=\sum_{i\in I} E_{i,\sigma(i)}\in M_n$ is a subpermutation. Then $A=[a_{ij}]\in Q U_n$ if and only if $A$ meets the following conditions:
\begin{enumerate}
\item
 $A$ and $Q$ have the same places of nonzero rows indexed by $I$;
\item
$A$ and $Q$ have the same places and values of the first nonzero entry in each nonzero row; precisely,
for each $i\in I$, $a_{i,\sigma(i)}=1$ is the first nonzero entry of the $i$th row of $A$.
\end{enumerate}
\end{lem}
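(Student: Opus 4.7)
The plan is to compute $QU$ explicitly for an arbitrary $U\in U_n$, read off its row structure, and then reverse the computation to recover $U$ from a matrix $A$ satisfying (1) and (2).

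First I would expand $Q U$ with $Q=\sum_{i\in I}E_{i,\sigma(i)}$ and $U=[u_{kj}]\in U_n$. Since $E_{i,\sigma(i)}U$ contributes only to row $i$ and equals the $\sigma(i)$th row of $U$ in that row, the $(i,j)$ entry of $QU$ is $u_{\sigma(i),j}$ if $i\in I$ and $0$ otherwise. Hence row $i$ of $QU$ is zero when $i\notin I$, giving condition (1), and coincides with row $\sigma(i)$ of $U$ when $i\in I$. Because $U$ is unit upper triangular, row $\sigma(i)$ of $U$ has its first nonzero entry equal to $1$ at column $\sigma(i)$, which is condition (2). This handles the forward direction.

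For the converse, given $A$ satisfying (1) and (2), I would build $U=[u_{kj}]\in U_n$ row by row. For each $i\in I$, declare the $\sigma(i)$th row of $U$ to be the $i$th row of $A$; for each $k\in[n]\setminus\sigma(I)$, declare the $k$th row of $U$ to be $e_k^T$. The injectivity of $\sigma$ on $I$ ensures these prescriptions do not conflict, so $U$ is well defined. Condition (2) then forces $u_{\sigma(i),\sigma(i)}=a_{i,\sigma(i)}=1$ and $u_{\sigma(i),j}=a_{ij}=0$ for $j<\sigma(i)$, so every row of $U$ has the required unit upper triangular profile. Finally, computing $QU$ row by row and invoking condition (1) for $i\notin I$ shows $QU=A$.

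The main subtlety, rather than a genuine obstacle, lies in verifying that the ad hoc definition of $U$ in the converse is simultaneously consistent (no row of $U$ is assigned twice) and unit upper triangular; both properties follow directly from the hypotheses (2) on the first nonzero entry of each prescribed row of $A$ and the injectivity of $\sigma$, so the argument is essentially a bookkeeping exercise once the row-by-row description of $QU$ is in hand.
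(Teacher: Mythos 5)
Your proof is correct and is precisely the ``direct computation'' the paper invokes without writing out: reading off that row $i$ of $QU$ equals row $\sigma(i)$ of $U$ (or zero), and in the converse reconstructing $U$ row by row from $A$, with injectivity of $\sigma$ and condition (2) guaranteeing the unit upper triangular profile. No gaps; it matches the intended argument.
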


The proof can be done by direct computation.

\subsection{The \Bel’s algorithm for the $B_n$-similarity in $N_n$}\label{sect: Balg}

 V.  Sergeichuk presented {\em \Balg}  to find a canonical form, {\em \Bform}, for the  $\Lambda$-similarity matrix problem 
  \cite{Sergeichuk2000}. On the $B_n$-similarity of $A\in N_n$, the algorithm can be described as follows. 

\begin{enumerate}
\item \label{Balg order}
List the matrix entry positions above the diagonal in a reversal row lexicographical order ``$\prec$'' called {\em \Border}: 
\begin{equation}\label{Border}
(n-1,n)\prec (n-2,n-1)\prec  (n-2, n)\prec  (n-3,n-2)\prec \cdots\prec (1,n).
\end{equation}
The strictly upper triangular entries  will be normalized 
through $B_n$-similarity  one-by-one in this order.

\item (Normalizing the first entry)
Let $(A^{(0)}, B^{(0)}):=(A,B_n)$. 
Find $A^{(1)}$  in the $B^{(0)}$-similarity orbit of $A^{(0)}=[a_{ij}]$ such that the $(n-1,n)$ entry of $A^{(1)}$   is either 0 or 1. 
For example,
$$
A^{(1)}:=
\begin{cases} 
A &\text{if \ } a_{n-1,n}=0,\\ (I_{n-1}\oplus [a_{n-1,n}])A(I_{n-1}\oplus [a_{n-1,n}])^{-1} &\text{if \ } a_{n-1,n}\ne 0.
\end{cases}
$$
Denote  the group
$$B^{(1)}:=\{g\in B^{(0)}\mid gA^{(1)}g^{-1} \text{ fixes   the value of the $(n-1,n)$ entry of $A^{(1)}$}\}.$$ 

\item  (Normalizing the consequent entries)
Suppose $(A^{(k)},B^{(k)})$ has been determined, and the group $B^{(k)}$ fixes the first $k$ entries of $A^{(k)}=[a_{ij}']$ in \Border.  Let $(p,q)$ be the $(k+1)$th entry position.
There are three situations for the $(p,q)$ entry of matrices $C=[c_{i,j}]$ in the $B^{(k)}$-similarity orbit of $A^{(k)}$:
\begin{enumerate}
\item  $c_{p,q}$ is always $0$, or $c_{p,q}$ could take any value of $\F$: we find $A^{(k+1)}=[a_{i,j}'']\overset{B^{(k)}}{\sim}A^{(k)}$ such that $a_{p,q}''=0$;
\item $c_{p,q}$ could take any  value of $\F\setminus\{0\}$: we find $A^{(k+1)}=[a_{i,j}'']\overset{B^{(k)}}{\sim}A^{(k)}$ such that $a_{p,q}''=1$;
\item otherwise, $c_{p,q}\equiv\lambda$ for a fixed $\lambda\in\F\setminus\{0\}$: we   choose $A^{(k+1)}=A^{(k)}$ with $a_{p,q}''=\lambda$.
\end{enumerate}
Let $B^{(k+1)}$ denote the subgroup of $B^{(k)}$ that  fixes the $(k+1)$th entry value as well as the first $k$ entry values of $A^{(k+1)}$.

\item
 Repeat the preceding step until the last position in \Border \ is reached.
Denote the last pair
$(A^\infty, B^\infty)$. 
 The matrix $A^\infty$  is called {\em the Belitski\u{\i}'s canonical form of $A$ under the $B_n$-similarity}. 

\end{enumerate}

The above algorithm shows that each upper triangular entry of \Bform \ $A^\infty$ is $0$ or $1$ or a {\em parameter} $\lambda$ in which
different $\lambda$ values correspond to different $B_n$-similarity orbits. This property is similar to that of a Jordan canonical form. Moreover, \Bform \ $A^\infty$ has the following connection to the subpermutation $Q$ in the $(B_n,B_n)$ double coset of $A$ and $A^\infty$.

\begin{thm}\label{thm: Bform in right coset}
Given a  Belitski\u{\i}'s canonical form $A\in N_n$, if $A\in B_n Q B_n$ in which $Q$ is a subpermutation, then $A\in Q U_n$. 
\end{thm}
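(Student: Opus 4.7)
The plan is to combine the uniqueness of \Bform\ within each $B_n$-similarity orbit with the constructive argument preceding Lemma~\ref{thm: subperm coset}. That argument yields some $A_0\in QU_n$ with $A_0\overset{B_n}{\sim}A$; since \Balg\ has a unique output on each $B_n$-orbit, running it on $A_0$ terminates at $A$. I would therefore show by induction on the algorithm step that every intermediate matrix $A^{(k)}$ can be kept inside $QU_n$, whence $A=A^{\infty}\in QU_n$.

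For the inductive step, I would split into cases according to how the position $(p,q)$ being processed relates to $Q=\sum_{i\in I}E_{i,\sigma(i)}$. Assume $A^{(k)}\in QU_n$. By Lemma~\ref{thm: subperm coset}, the entry $a^{(k)}_{p,q}$ equals $0$ whenever $p\notin I$ or $q<\sigma(p)$, and it equals $1$ when $p\in I$ and $q=\sigma(p)$; in both situations the algorithm makes no change, so $A^{(k+1)}=A^{(k)}\in QU_n$. The only remaining case is $p\in I$ with $q>\sigma(p)$, where $a^{(k)}_{p,q}$ is unconstrained by $QU_n$-membership and a genuine $B^{(k)}$-conjugation $g$ may be required to normalize it. Here I would either pick $g$ already preserving $QU_n$ or, if the raw conjugation $gA^{(k)}g^{-1}$ rescales certain first-nonzero entries away from $1$, compose $g$ with a suitable diagonal matrix $D'\in D_n$ that restores those entries to $1$, by the same $D'$-construction used in the paragraph preceding Lemma~\ref{thm: subperm coset}.

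The main obstacle is this last case: verifying that a suitable conjugation is always available within the stabilizer $B^{(k)}$ and that the compensating $D'$ fixes all previously normalized entries. I would address it by analyzing elementary unipotent conjugations $I+tE_{i,j}$ and diagonal conjugations on $A^{(k)}$, tracking their effect on the first-nonzero-entry position of each row. The unipotent ones disturb only entries in positions determined by $A^{(k)}$'s sparsity pattern, while diagonal ones rescale entries and can always be corrected by a $D'$ that commutes with $Q$ (existing because $Q\in N_n$ forces $\sigma(i)>i$ for all $i\in I$, giving the equations for $D'$ a triangular structure that can be solved). Once the induction concludes, $A^{\infty}\in QU_n$, and the uniqueness of \Bform\ gives $A=A^{\infty}\in QU_n$.
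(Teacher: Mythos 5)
Your top-level framework is sound (start from some $A_0\in QU_n$ that is $B_n$-similar to $A$, run the algorithm on $A_0$, use uniqueness of the canonical form to conclude the output is $A$), but the inductive step contains a genuine gap, and it sits exactly where you park "the main obstacle." When the position $(p,q)$ being processed has $p\in I$ and $q>\sigma(p)$, you propose to take any normalizing $g\in B^{(k)}$ and, if $gA^{(k)}g^{-1}$ leaves $QU_n$, to compose with a diagonal $D'$ as in the paragraph preceding Lemma \ref{thm: subperm coset}. That construction only guarantees $D'QD(D')^{-1}=Q$, i.e.\ that the \emph{leading} entries are restored to $1$; it says nothing about membership of $D'$ in the stabilizer $B^{(k)}$. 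A diagonal conjugation that fixes all leading entries is precisely one that rescales the remaining (extra-arc) entries — this is the very mechanism that produces parameters in Theorem \ref{thm: D_n similarity} and Theorem \ref{thm: Bform places of parameters} — so once some earlier off-leading entry has been normalized to $1$ (or frozen at a value $\lambda$), a nontrivial correcting $D'$ will in general destroy it, and may also change the just-normalized $(p,q)$ value. Hence "restore the leading $1$'s" and "fix all previously normalized entries" are in genuine conflict, and you have not shown that a $g\in B^{(k)}$ achieving the algorithm's value and preserving $QU_n$ exists. There is a second unproved assertion as well: at a leading position $(p,\sigma(p))$ you claim "the algorithm makes no change," but this requires ruling out that the entry can be annihilated within the $B^{(k)}$-orbit (if it could, the algorithm would set it to $0$ and the matrix would leave $QU_n$); nothing in your sketch excludes this. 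Note also that the machinery that would make this strategy rigorous (Theorems \ref{thm: ESO on coset} and \ref{thm: U_n similar to Bform}) is proved in the paper \emph{using} Theorem \ref{thm: Bform in right coset}, so importing it here would be circular; you would have to redo it from scratch under the extra constraint of staying inside $B^{(k)}$ at every step.

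For comparison, the paper's proof avoids all of this by inducting on the matrix size rather than on the algorithm steps: write $A=\left[\begin{smallmatrix}0&a^T\\0&A_1\end{smallmatrix}\right]$, observe that $A_1$ is itself a \bform\ in $N_{n-1}$ (so $A_1\in Q_1U_{n-1}$ by induction), and then use only two elementary consequences of \Balg\ for the first row: its leading nonzero entry must equal $1$ (else a diagonal conjugation improves it), and its column cannot contain the leading entry of a later row (else a unipotent conjugation pushes the leading entry of row $1$ further right, contradicting canonicity). Together with Lemma \ref{thm: double coset} this pins down $Q$ and gives $A\in QU_n$ directly. If you want to salvage your approach, you would need to prove a stabilizer-constrained version of the elimination argument — essentially that the normalized value at each step can always be realized by an element of $U_{S_Q}\cap B^{(k)}$ together with a diagonal that acts trivially on all previously fixed entries — which is substantially harder than the statement you are trying to prove.
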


\begin{proof}
The proof  is done by induction on $n$. $n=1$ is obviously true. Suppose the statement holds for all $n<m$. 
Given  $A\in B_mQ B_m$ where $Q\in N_{m}$ is a subpermutation, write $A=\mtx{0 &a^T\\ &A_1}$ for $A_1\in N_{m-1}$ and $a\in\F^{m-1}$.
By \Balg, $A_1$ is  a Belitski\u{\i}'s canonical form in $N_{m-1}$. 
Write $Q=\mtx{0 &b^T\\ &Q_1}$ in which $Q_1$ is a subpermutation in $N_{m-1}$ and $b\in\F^{m-1}$.
Then $A_1\in B_{m-1}Q_1 B_{m-1}$. So by induction hypothesis  $A_1= Q_1 \hat U$ for $\hat U\in U_{m-1}$. 
\begin{enumerate}
\item If $a=0$, then Lemma \ref{thm: double coset} implies that $Q=\mtx{0 &0\\&Q_1}$. Hence 
$$A=\mtx{0 &0\\&Q_1}\mtx{1 &0\\&\hat U}\in Q U_m.$$

\item If $a\ne 0$, let $A=[a_{ij}]$ and let $a_{1q}$ ($q\in\{2,\ldots,m\}$) be the leading nonzero entry in the first row of $A$.
Then
$$A\SIM{B_{m}}\mtx{a_{1q} &0\\ &I_{m-1}}^{-1}A\mtx{a_{1q} &0\\ &I_{m-1}}=\mtx{0 & a_{1q}^{-1} a^T\\ &A_1}$$ 
in which the last matrix
has the leading entry $1$ on the $(1,q)$ position. 
By \Balg \ $a_{1q}=1$.
  We claim that 
there is no $p\in\{2,\ldots,m\}$ such that $a_{pq}$ is the leading nonzero entry of the $p$th row of $A$  (i.e. the $(p-1)$th row of $A_1$). 
Otherwise,
\begin{eqnarray*}
A
&\SIM{B_m} &
\mtx{1 &\frac{1}{a_{pq}}(e_{p-1}^{(m-1)})^T\\ &I_{m-1}}^{-1}\mtx{0 &a^T\\ &A_1} \mtx{1 &\frac{1}{a_{pq}}(e_{p-1}^{(m-1)})^T\\ &I_{m-1}}
\\
&=& \mtx{0 &a^T-\frac{1}{a_{pq}}(e_{p-1}^{(m-1)})^TA_1\\ &A_1} 
\end{eqnarray*} 
where the first row of the last matrix has at least  $q$ leading zeros;  contradicting \Balg.   
By Lemma \ref{thm: subperm coset},  the $(q-1)$th column of $Q_1$ is zero.
Using \eqref{rank to subpermutation entries}, 
we have $Q=\mtx{0 &(e_{q-1}^{(m-1)})^T\\ &Q_1}$.
Let $\hat U(a^T)$ denote the matrix obtained by replacing the $(q-1)$th  row of $\hat U$ by $a^T$. 
Then $\hat U(a^T)\in U_{m-1}$ and 
$$A=\mtx{0 &(e_{q-1}^{(m-1)})^T\\ &Q_1}\mtx{1 &0\\ &\hat U(a^T)}\in Q U_m.$$
\end{enumerate}
Overall, the statement holds for $n=m$ and the induction process is completed.
\end{proof}

\begin{rem}
A \bform \  needs not be in $U_nQ$ or $B_nQ$. See the  examples in
Theorems \ref{thm: Bform n le 6}, \ref{thm: Bform n=7}, and \ref{thm: Bform n=8}.
\end{rem}

The direct sums of \bform s are obviously  \bform s. Moreover, Theorem \ref{thm: Bform in right coset} implies a way to combine \bform s together through certain subpermutations
to form a new \bform, as shown below.

\begin{thm}\label{thm: combine Bforms}
Suppose $A_1\in N_p$ and $A_2\in N_q$ are \bform s, in which $A_1\in Q_1U_p$ and $A_2\in Q_2U_q$ for subpermutations $Q_1\in N_p$ and $Q_2\in N_q$. 
If $Q_{12}\in M_{p,q}$  such that $\mtx{Q_1 &Q_{12}\\0 &Q_2}$ is a subpermutation,
then $\mtx{A_1 &Q_{12}\\0 &A_2}$ is a \bform\ in $N_{p+q}$.
\end{thm}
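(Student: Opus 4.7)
The plan is first to verify that $A := \smtx{A_1 & Q_{12}\\ 0 & A_2}$ lies in $QU_{p+q}$ for $Q := \smtx{Q_1 & Q_{12}\\ 0 & Q_2}$, and then to check that \Balg\ started on $A$ never alters any entry.

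The first step is a row-by-row application of Lemma \ref{thm: subperm coset}. Rows of $A$ in the bottom $q$-block inherit the first-nonzero condition directly from $A_2\in Q_2U_q$. For a row $i\le p$ of $Q$ whose unique nonzero entry lies in $Q_1$, the subpermutation property of the combined $Q$ forces $Q_{12}[i,:]=0$, so the condition for that row follows from $A_1\in Q_1U_p$. When the unique nonzero entry of row $i\le p$ of $Q$ lies in $Q_{12}$ (at column $\sigma(i)>p$), one has $Q_1[i,:]=0$ and hence $A_1[i,:]=0$, pinning the first nonzero of row $i$ of $A$ at $(i,\sigma(i))$ with value $1$. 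Zero rows of $Q$ give zero rows of $A$.

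For the second step, I would write each $B\in B_n$ blockwise as $B=\smtx{B_1 & B_{12}\\ 0 & B_2}$ and use that $(BAB^{-1})_{11}=B_1A_1B_1^{-1}$ and $(BAB^{-1})_{22}=B_2A_2B_2^{-1}$. Since $I_n$ always belongs to the running stabilizer $B^{(k)}$, the achievable set $S_{ij}=\{(BAB^{-1})[i,j]\mid B\in B^{(k)}\}$ always contains $A[i,j]$. If $A[i,j]=0$ then $0\in S_{ij}$, and the trichotomy implicit in \Balg\ (each $S_{ij}$ is $\{0\}$, $\F$, $\F\setminus\{0\}$, or a single nonzero value) forces $S_{ij}\in\{\{0\},\F\}$, so case (a) of \Balg\ selects $0$. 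If $A[i,j]\ne 0$ and $(i,j)$ lies in the $A_1$-block but is not the first nonzero entry of its row, I would project $B^{(k)}$ onto $B_p$ to embed $S_{ij}$ inside the achievable set for the corresponding step of \Balg\ run on $A_1$; because $A_1$ is itself a \bform, that larger set excludes $0$ and selects $A_1[i,j]$, and the trichotomy together with the inclusion forces the same choice for $A$. The $A_2$-block case is symmetric.

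The remaining possibility is that $(i,j)$ is the first nonzero entry of row $i$ of $A$, which is the only location where $A_{12}$ can carry a nonzero value. Here I would invoke the rank invariants of Lemma \ref{thm: double coset}: every $C=BAB^{-1}\in B_nQB_n$ satisfies $\rank C[\{i,\ldots,n\},\{1,\ldots,\sigma(i)\}]=\rank Q[\{i,\ldots,n\},\{1,\ldots,\sigma(i)\}]$. Row $i$ of $Q$ contributes a new nonzero column to this submatrix, so deleting row $i$ drops the rank by $1$. Combined with the previously normalized zeros $C[i,j]=0$ for $j<\sigma(i)$, this forces $C[i,\sigma(i)]\ne 0$; hence $0\notin S_{ij}$ and the trichotomy leaves \Balg\ no choice but to pick $1=A[i,\sigma(i)]$. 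The main obstacle I anticipate is the block-interior sub-case above: the projection of $B^{(k)}$ onto $B_p$ or $B_q$ may be strictly smaller than the Belitski\u{\i} stabilizer of $A_1$ or $A_2$, and justifying that the (possibly smaller) $S_{ij}$ still falls in the trichotomy slot selecting $A_1[i,j]$ (respectively $A_2[i,j]$) is where the subpermutation rigidity of $Q_{12}$, together with the canonical-form hypothesis on $A_1, A_2$, will be essential.
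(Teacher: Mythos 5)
Your proposal is correct, and its skeleton---show that \Balg, started at $A'=\smtx{A_1&Q_{12}\\0&A_2}$, never alters an entry, using the block formulas $(BA'B^{-1})_{11}=B_1A_1B_1^{-1}$, $(BA'B^{-1})_{22}=B_2A_2B_2^{-1}$ together with the canonicity of $A_1$ and $A_2$---is the same mechanism the paper relies on. Where you differ is in how the row-leading entries (in particular the $Q_{12}$-entries) are protected. The paper first checks $A'\in QU_{p+q}$ exactly as you do, then cites Theorem \ref{thm: Bform in right coset} to conclude that the canonical form of $A'$ again lies in $QU_{p+q}$, which pins every row-leading entry at $1$; it then identifies the blocks wholesale ($A_{22}=A_2$ by the order of the algorithm, $A_{11}=A_1$ via the projection $B\mapsto B_1$ and the fact that $A_1$ cannot be reduced further, hence $A_{12}=Q_{12}$). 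You instead reprove this leading-entry rigidity inside an entry-by-entry induction, via the rank invariants of Lemma \ref{thm: double coset}: for the subpermutation $Q$ the ranks of the lower-left submatrices with and without row $i$ differ by $1$, so together with the already-normalized zeros to the left of $(i,\sigma(i))$ every matrix $C$ in the stabilizer orbit has $C[i,\sigma(i)]\ne 0$. This buys a self-contained argument that does not invoke Theorem \ref{thm: Bform in right coset}; the paper's route is shorter precisely because that theorem already packages the same rank computation.

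The obstacle you flag at the end closes without extra input, and in particular without the ``subpermutation rigidity of $Q_{12}$.'' For a nonzero non-leading entry $(i,j)$ of the $A_1$-block, the projection $B\mapsto B_1$ sends the running stabilizer into the Belitski\u{\i} stabilizer of $A_1$ at the corresponding step, because the restriction of \Border\ to block-$11$ positions is \Border\ for $N_p$ and fixing the previously processed entries of $A'$ in particular fixes the previously processed entries of $A_1$; hence $S_{ij}\subseteq S'_{ij}$, where $S'_{ij}$ is the achievable set in the algorithm run on $A_1$. Since $A_1$ is a \bform\ with $A_1[i,j]\ne 0$, we have $0\notin S'_{ij}$, so $0\notin S_{ij}$, while $A_1[i,j]\in S_{ij}$ because the identity lies in the stabilizer. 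By the trichotomy, either $S_{ij}$ is a single value, which must then be $A_1[i,j]$, or $S_{ij}=\F\setminus\{0\}$, in which case $S'_{ij}=\F\setminus\{0\}$ as well and canonicity of $A_1$ gives $A_1[i,j]=1$, which is again the value the algorithm selects. The $A_2$-block case is even simpler, since there the achievable sets coincide with those of the algorithm run on $A_2$. With this observation spelled out, your induction is complete.
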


\begin{proof}
Let $A=\mtx{A_{11} &A_{12}\\0 &A_{22}}$ ($A_{11}\in N_p$) be \Bform\ of  $A':=\mtx{A_1 &Q_{12}\\0 &A_2}$. 
Then $A_{22}=A_2$ by \Balg.

Let $Q:=\mtx{Q_1 &Q_{12}\\0 &Q_2}$. 
Write $A_1=Q_1U'$ and $A_2=Q_2U''$ for $U'\in U_{p}$ and $U''\in U_{q}$. 
Then the nonzero rows of $A'=\mtx{Q_1U' &Q_{12}\\0 &Q_2U''}$ have
the same places and values (i.e., $1$) of leading nonzero entries as  the nonzero rows of $Q$ do.
Therefore, 
 $A'\in QU_{p+q}$ by Lemma \ref{thm: subperm coset}, and $A\in QU_{p+q}$ by Theorem \ref{thm: Bform in right coset}. 
 
 Now consider $A_{11}$ and $A_{12}$.
One one hand, 
each nonzero entry of the subpermutation $Q_{12}$ 
equals the corresponding row leading nonzero entry of  $A_{12}$. 
On the other hand, $A'\SIM{B_{p+q}} A$ implies that $A_{1}\SIM{B_{p}} A_{11}$;
 $A_{11}$ cannot be further reduced from \Bform\ $A_{1}$ in \Balg. 
 Therefore,  $A_{11}=A_1$ and $A_{12}=Q_{12}$ by \Balg, so that  $A=\mtx{A_1 &Q_{12}\\0 &A_2}$ is a \bform.
\end{proof}

\begin{rem}
In Theorem \ref{thm: combine Bforms}, the form of \Bform\ $\mtx{A_1 &Q_{12}\\0 &A_2}$ 
could have more parameters in nonzero entries of $A_1$ and $A_2$ than in the original \bform s $A_1$ and $A_2$. For an example, see the case
$A_1=A_2=\smtx{0 &1 &1 &0\\ &0&0&0\\&&0&1\\&&&0}$ and $Q_{12}=\smtx{0 &0 &0 &0\\1 &0 &0 &0\\0 &0 &0 &0\\0 &0 &1 &0}$ 
in Example \ref{ex: combine bforms}. 
\end{rem}

\subsection{$B_n$, $D_n$, and $U_n$ similarities}

On the group level, $B_n=D_n\ltimes U_n$. Two matrices
  $A\SIM{B_n} C$ if and only if $C=BAB^{-1}$ for  $B\in B_n$ and $B=UD$ such that $D\in D_n$ and $U\in U_n$, so that
 $A\SIM{D_n} DAD^{-1}\SIM{U_n} C$. The $D_n$-similarity  on  $M_n$ is easy to classify.


In this paper, $A\in M_n$ is called {\em indecomposable} if no permutation matrix $P\in M_n$ satisfies that $PAP^T$ can be written as a direct sum of two proper principal submatrices. The notation is different  from that in \cite{Chen2016}, but they are identical when referring to an
indecomposable \bform.

Given $A\in M_n$ and $i,j\in [n]$, let us define 
\begin{equation}
f_{ij}(A):=\begin{cases}
a_{ij} &\text{if \ } a_{ij}\ne 0,\\
\frac{1}{a_{ji}} &\text{if \ } a_{ij}=0 \text{ but } a_{ji}\ne 0,\\
0&\text{if \ } a_{ij}=a_{ji}=0.
\end{cases}
\end{equation}

\begin{thm}\label{thm: D_n similarity}
Two matrices $A=[a_{ij}], C=[c_{ij}]\in M_n$ have $A\SIM{D_n} C$ if and only if the following two conditions hold:
\begin{enumerate}
\item $A$ and $C$ have the same places of nonzero entries, namely, $a_{ij}\ne 0$ if and only if $c_{ij}\ne 0$; and 
\item for every
sequence $(i_1,\ldots,i_p)$ of distinct elements in $[n]$ such that 
at least one of
$a_{i_{k}i_{k+1}}$ and $a_{i_{k+1}i_{k}}$ is nonzero for each $k\in [p]$ (let $i_{p+1}:=i_1$), we have the identity
\begin{equation}\label{D_n similarity relation}
f_{i_1i_2}(A)\cdots f_{i_{p-1}i_p}(A)f_{i_pi_1}(A)=
f_{i_1i_2}(C)\cdots f_{i_{p-1}i_p}(C)f_{i_pi_1}(C).
\end{equation}
\end{enumerate}
\end{thm}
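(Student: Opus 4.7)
The plan is to convert the relation $C = DAD^{-1}$ with $D = \diag(d_1,\ldots,d_n) \in D_n$ into the entrywise equations $c_{ij} = (d_i/d_j)\,a_{ij}$ for all $i,j \in [n]$, and establish the two implications separately. For the forward direction, condition (1) is immediate since $d_id_j^{-1}\neq 0$. For condition (2), I would first prove the auxiliary identity $f_{ij}(C) = (d_i/d_j)\, f_{ij}(A)$ for every pair $(i,j)$ with $f_{ij}(A) \neq 0$ by a one-line case check on the definition of $f_{ij}$: when $a_{ij}\neq 0$ both sides are $c_{ij}$; when $a_{ij} = 0$ but $a_{ji}\neq 0$, $f_{ij}(C) = 1/c_{ji} = (d_i/d_j)\cdot(1/a_{ji}) = (d_i/d_j)\, f_{ij}(A)$. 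Plugging this into the left-hand side of \eqref{D_n similarity relation} produces the telescoping factor $(d_{i_1}/d_{i_2})(d_{i_2}/d_{i_3})\cdots(d_{i_p}/d_{i_1}) = 1$, yielding the identity.

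For the backward direction, I would construct $D$ explicitly by a spanning-tree argument. Let $G$ be the simple undirected graph on vertex set $[n]$ with an edge $\{i,j\}$ (for $i\neq j$) whenever at least one of $a_{ij}, a_{ji}$ is nonzero; by condition (1) this graph agrees whether built from $A$ or from $C$. In each connected component of $G$ pick a spanning tree $T$ and a root $r$, set $d_r := 1$, and recursively extend $d$ along tree edges by
\[
d_{i_{l+1}} := d_{i_l}\cdot\frac{f_{i_l i_{l+1}}(A)}{f_{i_l i_{l+1}}(C)}
\]
following the path from the root. For vertices isolated in $G$ set $d_i := 1$. It then suffices to verify $c_{ij} = (d_i/d_j)\,a_{ij}$ for all pairs $(i,j)$. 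Off the edges of $G$ both sides vanish by condition (1); on tree edges the identity holds by construction; and for a diagonal entry $a_{ii}\neq 0$, the $p=1$ instance of condition (2) forces $a_{ii} = c_{ii}$, so the equation holds automatically regardless of $d_i$.

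The main obstacle is consistency on non-tree edges. For such an edge $\{i,j\}$, the unique tree path from $i$ to $j$ together with the reverse edge produces a simple cycle $(i = i_1, i_2, \ldots, i_p = j)$ of distinct vertices in $G$ satisfying the hypothesis of condition (2). Running the telescoping computation from the forward direction in reverse along this cycle, the ratio $d_i/d_j$ assembled from the tree path must equal $f_{ij}(C)/f_{ij}(A)$, which is precisely what is required for the edge equation. This is the only place where the full cycle hypothesis is invoked; once consistency is settled on non-tree edges, the identity $c_{ij} = (d_i/d_j)\,a_{ij}$ holds throughout $[n]\times[n]$ and $D$ realizes $A \SIM{D_n} C$.
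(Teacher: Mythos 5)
Your overall strategy is essentially the paper's: both proofs build the conjugating diagonal by propagating the ratios $f_{uv}(A)/f_{uv}(C)$ along the graph of the common support of $A$ and $C$, and both rely on the cycle identities \eqref{D_n similarity relation} for consistency. The paper organizes this as an induction on $n$ (handling decomposable $A$ by a permutation similarity) plus an expanding-set search from the vertex $1$ in the indecomposable case, which implicitly grows the same spanning tree that you choose explicitly; your per-component spanning-tree formulation avoids the induction and isolates the use of condition (2) at the non-tree edges, which is arguably cleaner. Your forward direction, via $f_{ij}(C)=(d_i/d_j)f_{ij}(A)$ and telescoping, is correct and more detailed than the paper's one-line claim, and your use of the $p=1$ instance of (2) for diagonal entries is legitimate and needed.

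There is one incomplete step. For a tree edge $\{u,v\}$ with \emph{both} $a_{uv}\ne 0$ and $a_{vu}\ne 0$, your recursion enforces only one of the two required equations, say $c_{uv}=(d_u/d_v)a_{uv}$; the reverse equation $c_{vu}=(d_v/d_u)a_{vu}$ does not hold ``by construction'' but needs $a_{uv}a_{vu}=c_{uv}c_{vu}$, which is exactly the $p=2$ instance of condition (2) — the paper invokes this explicitly right after defining $d_j$ in \eqref{d_j}. The same instance is what lets you use $f_{i_ki_{k+1}}(C)=(d_{i_k}/d_{i_{k+1}})f_{i_ki_{k+1}}(A)$ for tree edges traversed against the root orientation when you telescope along the tree path to a non-tree edge. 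So your remark that the cycle hypothesis is invoked only at non-tree edges is not accurate. The fix is a one-line observation: for every edge of $G$ whose two symmetric entries are both nonzero, the $p=2$ identity makes the ratio $d_v/d_u=f_{uv}(A)/f_{uv}(C)$ orientation-independent and equivalent to both entry equations; with that noted, the remaining cases (off-edge pairs, isolated vertices, diagonal entries, and the non-tree-edge cycle computation) go through as you describe.
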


\begin{proof}
Suppose $C=DAD^{-1}$ where $D=\diag(d_1,\ldots,d_n)$ is nonsingular. Then $c_{ij}=\frac{d_i}{d_j} a_{ij}$ for $i,j\in [n]$. Conditions
(1) and (2) in the theorem obviously hold.

Conversely, we use induction on $n$ to prove that  (1) and (2)  imply $A\SIM{D_n} C$.  $n=1$ is  true.
Suppose the claim  holds for  all cases of $n<m$. Now for $n=m$, let $A, C\in M_n$  satisfy (1) and (2). 
If $A$ is not indecomposable, then there is a permutation matrix $P$ such that $PAP^T$ and
$PCP^T$ are direct sums of respective proper principal submatrices. So
by induction hypothesis $PAP^T\SIM{D_n}PCP^T$ and $A\SIM{D_n}C$.
Otherwise, $A$ is indecomposable. We find $d_1,\ldots, d_n\in\F\setminus\{0\}$  as follows such that $c_{ij}=\frac{d_i}{d_j} a_{ij}$ for $i,j\in [n]$. 
Let $S_0:=\{1\}$ and $d_1:=1$.

\begin{enumerate}
\item
Since $A$ is indecomposable, there are $j\in [n]\setminus S_0$ such that $a_{1j}\ne 0$ or $a_{j1}\ne 0$, in which we define 
\begin{equation}\label{d_j}
d_j:=\begin{cases} d_1\frac{a_{1j}}{c_{1j}} &\text{if \ } a_{1j}\ne 0,
\\
d_1\frac{c_{j1}}{a_{j1}} &\text{if \ } a_{1j}=0,\ a_{j1}\ne 0.
\end{cases}
\end{equation} 
In the case $a_{1j}\ne 0$ and $a_{j1}\ne 0$, \eqref{D_n similarity relation} gives $a_{1j}a_{j1}=c_{1j}c_{j1}$ so that 
the $d_j$ defined by \eqref{d_j} satisfies both
$c_{1j}=\frac{d_1}{d_j}a_{1j}$ and $c_{j1}=\frac{d_j}{d_1} a_{j1}$. 
Let 
\begin{equation*}
S_1:=S_0\cup\{j\in [n]\setminus S_0: a_{1j}\ne 0 \text{ or } a_{j1}\ne 0\}.
\end{equation*} 
Then $S_1\supsetneq S_0$ and $c_{ij}=\frac{d_i}{d_j} a_{ij}$ for $i,j\in S_1$. 

\item
If $S_1\ne [n]$,
then $A$ being indecomposable  implies that  
$a_{ij}\ne 0$ or $a_{ji}\ne 0$ for some $(i,j)\in S_1\times ([n]\setminus S_1)$, in which we define
\begin{equation}
d_j:=\begin{cases} d_i\frac{a_{ij}}{c_{ij}} &\text{if \ } a_{ij}\ne 0,
\\
d_i\frac{c_{ji}}{a_{ji}} &\text{if \ } a_{ij}=0,\ a_{ji}\ne 0.
\end{cases}
\end{equation} 
 Let
\begin{equation*}
S_2:=S_1\cup\{j\in [n]\setminus S_1:  a_{ij}\ne 0 \text{ or } a_{ji}\ne 0 \text{ for some }  i\in S_1\}.
\end{equation*} 
Then $S_2\supsetneq S_1$ and $c_{ij}=\frac{d_i}{d_j} a_{ij}$ for $i,j\in S_2$ by \eqref{D_n similarity relation}. 

\item
Repeat the process until we reach $S_m=[n]$, where all $d_j$ for $j\in [n]$ are well-defined. 
Let $D:=\diag(d_1,\ldots,d_n)$ then $C=DAD^{-1}$ as desired.
\qedhere 
\end{enumerate}
\end{proof}

Theorem \ref{thm: D_n similarity} shows that:
if  $A\in M_n$ is transformed via a $B_n$-similarity action to  $C\in M_n$, the zero places of $C$ are determined by the associated $U_n$-similarity transformation. 
The identities \eqref{D_n similarity relation} in Theorem \ref{thm: D_n similarity} will also be used to determine the places of parameters in a \bform. 

The matrix group $U_n$ is generated by 
\begin{equation}
\{I_n+\lambda E_{pq}: \lambda\in\F,\ p,q\in [n],\ p<q\}.
\end{equation}

\begin{definition}\label{def: ESO}
Given $\lambda\in\F$, $(p,q)\in [n]\times [n]$ and $p<q$,  we define   an {\em  elementary $U_n$-similarity operation (ESO)} to be the function $\O_{p,q}^{\lambda}: N_n\to N_n$ such that
for $A =[a_{ij}]\in N_n$: 
\begin{eqnarray}
\O_{p,q}^{\lambda}(A) 
&:=&
(I_n+\lambda E_{pq})A(I_n+\lambda E_{pq})^{-1}
\notag\\
&=& 
(I_n+\lambda E_{pq})(\sum_{i,j=1}^{n} a_{ij}E_{ij})(I_n-\lambda E_{pq})
\notag
\\ \label{elementary similarity}
&=& A+\sum_{\substack{j\in [n]\\ a_{qj}\ne 0}}\lambda a_{qj}E_{pj}-\sum_{\substack{i\in [n]\\a_{ip}\ne 0}}  \lambda a_{ip}E_{iq}.
\end{eqnarray}
Each $\O_{p,q}^{\lambda}$ is also called an {\em $\O_{p,q}$-operation}.
\end{definition}

The ESOs will be described
by graph operations in Section 3.

\begin{lem}\label{thm: U by elementary matrices}
Given $U\in U_n$, write $U=I_n+\sum_{k=1}^{m} u_{i_kj_k} E_{i_kj_k}$ where 
 $(i_1,j_1)\prec (i_2,j_2)\prec\cdots\prec (i_m,j_m)$  in \Border\ \eqref{Border}. 
Then 
\begin{equation}\label{elementary matrices generates others}
U=(I_n+u_{i_1j_1}E_{i_1j_1})\cdots (I_n+u_{i_mj_m}E_{i_mj_m}).
\end{equation}
\end{lem}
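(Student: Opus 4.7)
The plan is to proceed by induction on $m$, with the base case $m=1$ being immediate. For the inductive step I would apply the induction hypothesis to the last $m-1$ factors so that
\[
(I_n+u_{i_2j_2}E_{i_2j_2})\cdots (I_n+u_{i_mj_m}E_{i_mj_m}) = I_n+\sum_{k=2}^m u_{i_kj_k}E_{i_kj_k},
\]
and then multiply by $I_n+u_{i_1j_1}E_{i_1j_1}$ on the left and expand.

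The key algebraic input is that whenever $(i,j)\prec (i',j')$ in \Border\ \eqref{Border}, the product $E_{ij}E_{i'j'}$ vanishes. To see this, note that \eqref{Border} orders strictly upper triangular positions first by decreasing row index and then by increasing column index, so $(i,j)\prec(i',j')$ means either $i>i'$, or $i=i'$ with $j<j'$. In the first case $j>i>i'$, and in the second case $j>i=i'$, so in either case $j\neq i'$. Since $E_{ij}E_{i'j'}=\delta_{j,i'}E_{i,j'}$, this yields $E_{ij}E_{i'j'}=0$.

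Applying this observation to the expansion
\[
(I_n+u_{i_1j_1}E_{i_1j_1})\Bigl(I_n+\sum_{k=2}^m u_{i_kj_k}E_{i_kj_k}\Bigr) = I_n+\sum_{k=1}^m u_{i_kj_k}E_{i_kj_k}+u_{i_1j_1}\sum_{k=2}^m u_{i_kj_k}E_{i_1j_1}E_{i_kj_k},
\]
every cross term in the final sum is zero because $(i_1,j_1)\prec(i_k,j_k)$ for each $k\geq 2$, so what remains is exactly $I_n+\sum_{k=1}^m u_{i_kj_k}E_{i_kj_k}=U$, closing the induction.

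I do not anticipate a genuine obstacle here; the only thing that requires care is correctly unpacking the convention of \Border\ and using strict upper triangularity ($j>i$ together with $i\geq i'$) to conclude $j\neq i'$. That single combinatorial fact is all the induction needs to eliminate the quadratic cross terms in \eqref{elementary matrices generates others}.
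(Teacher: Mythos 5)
Your proof is correct and is essentially the paper's argument: the paper peels off one factor at a time by left-multiplying with $(I_n-u_{i_1j_1}E_{i_1j_1})$, which works for exactly the reason you isolate, namely that $(i_1,j_1)\prec(i_k,j_k)$ in \Border\ forces $j_1\neq i_k$, so the first position cannot interact with the later ones. Your explicit identity $E_{ij}E_{i'j'}=0$ for $(i,j)\prec(i',j')$ simply makes transparent the cancellation that the paper's successive-elimination step leaves implicit.
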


\begin{proof}
Left multiply $(I_n+u_{i_1j_1}E_{i_1j_1})^{-1}$ onto $U$. The matrix
$(I_n+u_{i_1j_1}E_{i_1j_1})^{-1}U=(I_n-u_{i_1j_1}E_{i_1j_1})U$ is the one that eliminates the $(i_1,j_1)$ entry of $U$.
Keep left multiplying $(I_n+u_{i_2j_2}E_{i_2j_2})^{-1},\ldots,(I_n+u_{i_mj_m}E_{i_mj_m})^{-1}$ in order.
We will have
$$(I_n+u_{i_mj_m}E_{i_mj_m})^{-1}\cdots (I_n+u_{i_1j_1}E_{i_1j_1})^{-1}U=I_n.$$
So \eqref{elementary matrices generates others} holds. 
\end{proof}

\begin{rem}
Given $U\in U_n$, if we write $U^{-1}=I_n-\sum_{k=1}^{m} u_{i_kj_k}' E_{i_kj_k}$ where $(i_1,j_1)\prec (i_2,j_2)\prec\cdots\prec (i_m,j_m)$, then \eqref{elementary matrices generates others} implies that
$$
U^{-1}=(I_n-u_{i_1j_1}' E_{i_1j_1})\cdots (I_n-u_{i_mj_m}' E_{i_mj_m})
$$
so that
\begin{equation}
U=(I_n+u_{i_mj_m}' E_{i_mj_m}) \cdots (I_n+u_{i_1j_1}' E_{i_1j_1}).
\end{equation}
\end{rem}

\begin{lem}\label{thm: group generated by elements}
Let $S\subseteq \{(i,j)\in [n]\times[n]: i<j\}$ such that 
\begin{equation}\label{U_S}
U_S:=\{I_n+\sum_{(i,j)\in S} a_{ij}E_{ij}: a_{ij}\in \F\}
\end{equation} 
is a subgroup of $U_n$.
Then $U_S$ is generated by $\{I_n+\lambda E_{ij}: (i,j)\in S,\lambda\in\F\}$, and each element of
$U_S$ can be written as a product of  no more than $|S|$ elements in $\{I_n+\lambda E_{ij}: (i,j)\in S,\lambda\in\F\}$.
\end{lem}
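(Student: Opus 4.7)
The plan is to deduce the claim directly from Lemma \ref{thm: U by elementary matrices}. Let $G$ denote the subgroup of $U_n$ generated by $\mathcal{E}_S := \{I_n + \lambda E_{ij} : (i,j) \in S,\ \lambda \in \F\}$. Each generator in $\mathcal{E}_S$ already lies in $U_S$ (take the coefficient of $E_{ij}$ in \eqref{U_S} to be $\lambda$ and all other coefficients to be zero), so because $U_S$ is a subgroup by hypothesis, closure under products and inverses forces $G \subseteq U_S$.

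For the reverse inclusion together with the length bound, fix an arbitrary $U = I_n + \sum_{(i,j)\in S} a_{ij} E_{ij} \in U_S$ and let $T := \{(i,j) \in S : a_{ij} \ne 0\}$, so that $|T| \le |S|$. Enumerating $T$ in \Border\ as $(i_1, j_1) \prec (i_2,j_2)\prec \cdots \prec (i_m, j_m)$ with $m = |T|$, Lemma \ref{thm: U by elementary matrices} delivers the factorization
\begin{equation*}
U = (I_n + a_{i_1 j_1} E_{i_1 j_1})(I_n + a_{i_2 j_2} E_{i_2 j_2}) \cdots (I_n + a_{i_m j_m} E_{i_m j_m}),
\end{equation*}
each of whose factors belongs to $\mathcal{E}_S$ because $(i_k,j_k) \in T \subseteq S$. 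Hence $U \in G$ and $U$ is expressed as a product of at most $|S|$ generators drawn from $\mathcal{E}_S$.

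No real obstacle arises: the subgroup assumption on $U_S$ is used only to guarantee the easy inclusion $G \subseteq U_S$, while the telescoping factorization that underlies the length bound has already been carried out in Lemma \ref{thm: U by elementary matrices}. The one subtle point to double-check is simply that the indices $(i_k,j_k)$ produced by Lemma \ref{thm: U by elementary matrices} are exactly the support of $U - I_n$, which, for $U \in U_S$, is automatically contained in $S$; this is what ensures the factors stay inside $\mathcal{E}_S$ rather than leaking into $U_n \setminus U_S$.
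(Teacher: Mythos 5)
Your proposal is correct and follows exactly the route the paper intends: the paper's proof simply cites Lemma \ref{thm: U by elementary matrices} as giving the result directly, and your argument spells out that factorization (support of $U-I_n$ lies in $S$, hence at most $|S|$ factors, all from the generating set) together with the easy inclusion from the subgroup hypothesis. Nothing further is needed.
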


\begin{proof}
It is a direct consequence of Lemma \ref{thm: U by elementary matrices}.
\end{proof}

Given a subpermutation $Q$, the coset $Q U_n$ is not closed under the $U_n$-similarity. However,
the following result indicates that   $U_n$-similar  matrices in $Q U_n$ 
can be transformed to each other via finitely many ESOs stabilizing $Q U_n$.

\begin{thm}\label{thm: ESO on coset}
Let $Q\in N_n$ be a subpermutation.  Let $A, C\in Q U_n$ such that $A\SIM{U_n}C$. 
Then there exist a sequence of ESOs $\{\O_{i_1,j_1}^{\lambda_1},\ldots,\O_{i_m,j_m}^{\lambda_m}\}$, 
$\lambda_k\in\F$ and $1\le i_k<j_k\le n$ for $k\in [m]$, such that the followings conditions hold: 
\begin{enumerate}
\item   $(i_1,j_1)\prec (i_2,j_2)\prec\cdots\prec (i_m,j_m)$ in \Border\ \eqref{Border}.
\item Let $A_0:=A$ and for $k\in [m]$: 
\begin{equation}
A_{k}:=\O_{i_k,j_k}^{\lambda_k}(A_{k-1})=(I_n+\lambda_k E_{i_kj_k})A_{k-1}(I_n+\lambda_k E_{i_kj_k})^{-1}.
\end{equation}
Then $A_0, A_1,\ldots,A_m\in QU_n$ and $A_m=C$. 
\end{enumerate}
\end{thm}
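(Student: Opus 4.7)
The plan is to pull back the $U_n$-similarity to a product of elementary conjugations ordered in Belitski\u{\i} order. Fix $V \in U_n$ with $C = V A V^{-1}$. By the remark following Lemma \ref{thm: U by elementary matrices}, $V$ admits the factorization
\[
V = X_m X_{m-1} \cdots X_1, \qquad X_k = I_n + x_k E_{i_k j_k},
\]
where $(i_1, j_1) \prec (i_2, j_2) \prec \cdots \prec (i_m, j_m)$ is a Belitski\u{\i}-ordered enumeration of positions and the leftmost factor $X_m$ carries the largest Belitski\u{\i} index (the coefficients $x_k$ come from writing $V^{-1} = I_n - \sum_k u'_{i_k j_k} E_{i_k j_k}$). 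Setting $A_0 := A$, $A_k := X_k A_{k-1} X_k^{-1} = \O_{i_k, j_k}^{x_k}(A_{k-1})$, and $\lambda_k := x_k$, unrolling $C = X_m \cdots X_1 A X_1^{-1} \cdots X_m^{-1}$ from the inside outward gives $A_m = C$ and applies the ESOs at the positions $(i_1, j_1), \ldots, (i_m, j_m)$ in Belitski\u{\i} order, yielding condition (1) and the recursion in condition (2).

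What remains, and is the main obstacle, is to verify $A_k \in QU_n$ for every $k$. My plan is induction on $n$ via the block decomposition
\[
A = \smtx{0 & a^T \\ 0 & A'}, \quad C = \smtx{0 & c^T \\ 0 & C'}, \quad Q = \smtx{0 & b^T \\ 0 & Q'}.
\]
The lower blocks satisfy $A', C' \in Q' U_{n-1}$ and $A' \SIM{U_{n-1}} C'$, so the inductive hypothesis supplies an ESO sequence in the Belitski\u{\i} order of $N_{n-1}$ taking $A'$ to $C'$ through $Q' U_{n-1}$. Lifting these to ESOs at row indices $\ge 2$ on $N_n$ produces an intermediate $A^\ast \in QU_n$ with $(A^\ast)' = C'$, and these row-$\ge 2$ positions form the initial segment of the Belitski\u{\i} order on $N_n$. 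For the remaining row-$1$ ESOs $\O_{1, j}^{\lambda}$ with $j = 2, \ldots, n$, the identity $A^\ast E_{1 j} = 0$ (the first column of $A^\ast$ vanishes) reduces conjugation by $I_n + \sum_{j \ge 2} w_j E_{1 j}$ to left multiplication, so matching row $1$ of $A^\ast$ to $c^T$ becomes the linear system
\[
c^T - a^{\ast T} = \sum_{j \ge 2} w_j A^\ast[j, \cdot].
\]
Because the $E_{1 j}$ pairwise annihilate, the Belitski\u{\i}-ordered factorization of $I_n + \sum_j w_j E_{1 j}$ uses the same coefficients $w_j$, and $QU_n$-preservation on row $1$ follows from Lemma \ref{thm: subperm coset}, since the determined entries in row $1$ (dictated by $Q$) already agree between $A^\ast$ and $C$.

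The hardest subpoint will be the solvability of this row-$1$ linear system when the lower-block conjugator is nontrivial. I plan to address it using the centralizer relation $V''' C' = C' V'''$ satisfied by the lower block of any conjugator $V = \smtx{1 & v^T \\ 0 & V'''}$ realizing $A^\ast \SIM{U_n} C$, together with the top-right block identity $c^T V''' = a^{\ast T} + v^T C'$ extracted from $V A^\ast V^{-1} = C$; by exploiting the freedom to modify the induction's lower-block ESO sequence up to the centralizer of $C'$, one should be able to arrange the bottom block of the residual conjugator to be $I_{n-1}$, so that $c^T - a^{\ast T} = v^T C' \in \mathrm{rowspan}(C')$ as required.
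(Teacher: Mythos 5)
Your first paragraph is fine as far as it goes, but the entire content of the theorem is the claim you defer, namely that the intermediate matrices stay in $QU_n$, and your inductive route does not close it. The decisive gap is the solvability of your row-$1$ system $c^T-a^{\ast T}=\sum_{j\ge 2}w_jA^\ast[j,\cdot]$: for the $A^\ast$ produced by a generic application of your stated inductive hypothesis this is simply false. Take $n=3$, $Q=E_{12}$, $A=E_{12}$ and $C=E_{12}+E_{13}=(I_3-E_{23})A(I_3-E_{23})^{-1}=\O_{2,3}^{-1}(A)$; both lie in $QU_3$, their lower $2\times 2$ blocks are both zero, so the induction may legitimately return the empty sequence, giving $A^\ast=A$, and then $c^T-a^{\ast T}$ is not in the (zero) row space of the lower rows. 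The ESO that actually does the job sits at the row-$2$ position $(2,3)$, i.e.\ in the phase your induction has already finished. Your proposed repair --- re-choosing the lower-block sequence so that the residual conjugator has bottom block $I_{n-1}$ --- requires a strictly stronger statement than the one you are inducting on: you would need that \emph{every} $V'\in U_{n-1}$ with $V'A'V'^{-1}=C'$ (in particular the bottom block of an actual conjugator of $A$ and $C$) admits a Belitski\u{\i}-ordered elementary factorization whose partial conjugations keep the matrix inside $Q'U_{n-1}$. That strengthened statement is true, but it is not what ``there exists a sequence of ESOs'' gives you, and proving it is essentially the whole theorem; you acknowledge this step only with ``one should be able to arrange''. (Your row-$1$ preservation claim via Lemma \ref{thm: subperm coset} is also only sketched, though that part can be patched by a leading-entry argument.)

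The paper avoids all of this by a direct stabilizer computation, which is the idea missing from your proposal. Writing $A=QU'$, one checks that for $T\in U_n$ the condition $TAT^{-1}\in QU_n$ is equivalent to $TQ\in QU_n$, hence depends only on $Q$, and holds exactly when the strictly upper entries of $T$ are supported on the index set $S_Q$ of \eqref{index set of U_n stabilizing QU_n}; thus $\{T\in U_n: TAT^{-1}\in QU_n\}=U_{S_Q}$ is a subgroup of the form \eqref{U_S}, independent of the particular $A\in QU_n$. Consequently the given conjugator $U$, and hence $U^{-1}$, lies in $U_{S_Q}$; Lemma \ref{thm: U by elementary matrices} factors $U^{-1}$ into elementary matrices at positions in $S_Q$ listed in \Border, and since each factor itself belongs to $U_{S_Q}$, every partial conjugation of $A$ remains in $QU_n$, which proves both (1) and (2) at once. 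If you want to salvage your approach, prove this support characterization first (your row-$1$ analysis is precisely its $i=1$ case); once it is available the induction on $n$ becomes unnecessary.
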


\begin{proof} Let $Q=\sum_{i\in I} E_{i,\sigma(i)}$ as in \eqref{subpermutation}. 
Let $A=Q U'$ and $C=UAU^{-1}=UQU'U^{-1}$ for $U, U'\in U_n$. 
Write $U=I_n+[u_{ij}]$ where $[u_{ij}]\in N_n$. By direct computation, $C\in QU_n$ if and only if  $UQ\in QU_n$, if and only if the nonzero $u_{ij}$ entries have the pairs $(i,j)$ in the set
\begin{equation}\label{index set of U_n stabilizing QU_n}
S_Q:=\{(i,j)\in I\times I: i<j,\sigma(i)<\sigma(j)\}\cup \{(i,j)\in [n]\times ([n]\setminus I): i<j\}.
\end{equation}
Therefore, the group $\{T\in U_n: TAT^{-1}\in QU_n\}=U_{S_Q}$ which is generated by  $\{I_n+\lambda E_{ij}: (i,j)\in S_Q,\ \lambda\in\F\}$ according to Lemma \ref{thm: group generated by elements}. Moreover, $U^{-1}\in U_{S_Q}$. If we write  
$$
U^{-1}=I_n-\sum_{k=1}^{m} \lambda_{k} E_{i_kj_k},
$$ 
in which $\lambda_{k}\in\F\setminus\{0\}$, $(i_k,j_k)\in S_Q$ and $(i_1,j_1)\prec \cdots\prec (i_m,j_m)$ in \Border,
then by Lemma \ref{thm: U by elementary matrices},
$U^{-1}=(I_n-\lambda_1 E_{i_1j_1})\cdots (I_n-\lambda_m E_{i_mj_m})$ and
$$
C =(I_n+\lambda_m E_{i_mj_m})\cdots (I_n+\lambda_1 E_{i_1j_1})A (I_n+\lambda_1 E_{i_1j_1})^{-1}\cdots (I_n+\lambda_m E_{i_mj_m})^{-1}.
$$
So  Theorem \ref{thm: ESO on coset} (1) and (2) are proved.
\end{proof}

\begin{rem} 
Theorem \ref{thm: ESO on coset} also holds if we replace condition (1) by the condition: $(i_1,j_1)\succ (i_2,j_2)\succ\cdots\succ (i_m,j_m)$ in \Border\ \eqref{Border}. 
\end{rem}

\begin{thm}\label{thm: U_n similar to Bform}
If $A\in Q U_n$ and $Q\in N_n$ is a  subpermutation, then $A$ can be transformed via a finite number of ESOs stabilizing $Q U_n$
to a matrix $\widetilde{A^\infty}\in Q U_n$ which is $D_n$-similar to \Bform \ $A^\infty\in QU_n$.
\end{thm}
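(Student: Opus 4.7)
The plan is to produce $\widetilde{A^\infty}$ by factoring the $B_n$-similarity that connects $A$ and $A^\infty$. By Theorem \ref{thm: Bform in right coset}, $A^\infty \in QU_n$, so there exists $B\in B_n$ with $A^\infty = BAB^{-1}$. Using $B_n = D_n \ltimes U_n$, I decompose $B = DU$ with $D \in D_n$ and $U \in U_n$ and set
\[
  \widetilde{A^\infty} \;:=\; UAU^{-1} \;=\; D^{-1}A^\infty D.
\]
By construction $\widetilde{A^\infty}$ is $U_n$-similar to $A$ via $U$ and $D_n$-similar to $A^\infty$ via $D$, so the remaining task is to choose $B$ so that $\widetilde{A^\infty} \in QU_n$. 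Since $D$-conjugation preserves the zero pattern of $A^\infty$, this is equivalent to the leading nonzero entries $(\widetilde{A^\infty})_{i,\sigma(i)} = d_{\sigma(i)}/d_i$ being $1$ for every $i \in I$, i.e.\ to $D$ lying in the subgroup $D_n^Q := \{D \in D_n : d_i = d_{\sigma(i)},\ \forall i \in I\}$.

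I would produce such a $B$ by running \Balg \ on $A\in QU_n$ in \Border \ \eqref{Border}, and recording each step either as an ESO (contributing to $U$) or as a diagonal scaling $D_k \in D_n$ (contributing to $D$). The scalings occur only at case (b) steps, and each $D_k$ must preserve every previously normalized entry. Because \Border \ processes leading positions before later positions within a row and processes rows from bottom to top, every leading $(i,\sigma(i))$ with $i \ge p$ has been fixed to $1$ by the time a case (b) step acts at a non-leading position $(p,q)$ with $p\in I$, $q > \sigma(p)$. Preservation thus forces $d_{k, i} = d_{k, \sigma(i)}$ for all such $i$, and the still-free entries of $D_k$ can be extended by hand to satisfy $d_{k, i} = d_{k, \sigma(i)}$ on the $\sigma$-chains of $I$ below $p$ as well, placing $D_k \in D_n^Q$. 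Moreover, when $p$ and $q$ lie in the same $\sigma$-chain $p \to \sigma(p)\to\cdots \to \sigma^{k}(p) = q$, the preserved leading entries $(\sigma^j(p),\sigma^{j+1}(p)) = 1$ link $d_{k,p} = d_{k,\sigma(p)} = \cdots = d_{k,q}$, forcing the scaling factor to be $1$ and degenerating the step to case (c); a genuine case (b) therefore only occurs with $p, q$ in distinct chains, where the ratio $d_{k,p}/d_{k,q}$ is unconstrained and compatible with $D_k \in D_n^Q$. Multiplying through, the cumulative $D$ lies in $D_n^Q$.

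With $\widetilde{A^\infty}\in QU_n$ in hand, Theorem \ref{thm: ESO on coset} applied to the $U_n$-similar pair $A,\widetilde{A^\infty}\in QU_n$ delivers the required finite sequence of ESOs in $U_{S_Q}$ transforming $A$ to $\widetilde{A^\infty}$, completing the proof. The main obstacle is the chain argument in the middle paragraph: rigorously confirming that \Balg \ never needs a nontrivial case (b) scaling of an entry whose indices lie in a single $\sigma$-chain, which hinges on the interplay between \Border, the preserved leading $1$s, and the ESO-closure structure of $U_{S_Q}$ established in Theorem \ref{thm: ESO on coset}.
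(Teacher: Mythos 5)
Your setup (factor $B=DU$ with $A^\infty=BAB^{-1}$, put $\widetilde{A^\infty}:=UAU^{-1}=D^{-1}A^\infty D$, and invoke Theorem \ref{thm: ESO on coset} at the end) matches the paper, and your reduction of ``$\widetilde{A^\infty}\in QU_n$'' to ``$d_i=d_{\sigma(i)}$ for all $i\in I$'' is correct. The genuine gap is the middle paragraph, and it is twofold. First, \Balg\ does not hand you steps that are each either an ESO or a pure diagonal scaling preserving all previously normalized entries: at the $(k+1)$st step it uses an arbitrary element $g$ of the stabilizer $B^{(k)}$, and when you factor $g=D_kU_k$ the factors $D_k$ and $U_k$ need not individually lie in $B^{(k)}$ (the unipotent part may disturb already-fixed entries and the diagonal part may restore them). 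So your bookkeeping ``case (b) steps are diagonals $D_k$ that must preserve the earlier leading $1$'s, hence $D_k\in D_n^Q$'' is not licensed by the algorithm as defined, and proving that the steps can be so realized is essentially the content you would still owe. Second, even granting that bookkeeping, your same-chain argument only shows that a diagonal element of the stabilizer acts trivially on an entry $(p,q)$ with $p,q$ in one $\sigma$-chain; case (b) is a statement about the full $B^{(k)}$-orbit of that entry, so this does not show the step ``degenerates to case (c)'' — which is exactly the obstacle you flag yourself. As written, the proof does not close.

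The paper avoids the entire difficulty: no choice of $B$ is needed, because $\widetilde{A^\infty}=UAU^{-1}\in QU_n$ holds automatically for any $B=DU$ conjugating $A$ to $A^\infty$. Indeed, by Theorem \ref{thm: Bform in right coset} and Theorem \ref{thm: D_n similarity}, $\widetilde{A^\infty}=D^{-1}A^\infty D$ has the same zero pattern as $A^\infty\in QU_n$, so each nonzero row $i$ has its leading entry in column $\sigma(i)$; and writing $\widetilde{A^\infty}=U\,(AU^{-1})$ with $AU^{-1}\in QU_n$, the row $R_i(\widetilde{A^\infty})$ is $R_i(AU^{-1})$ plus multiples of lower rows of $AU^{-1}$, whose leading $1$'s occupy pairwise distinct columns $\sigma(j)\ne\sigma(i)$; hence the entry of $\widetilde{A^\infty}$ in position $(i,\sigma(i))$ is the coefficient of $R_i(AU^{-1})$, namely $1$, giving $\widetilde{A^\infty}\in QU_n$ by Lemma \ref{thm: subperm coset}. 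Then Theorem \ref{thm: ESO on coset} supplies the finite ESO sequence. If you want to salvage your route, you would have to prove the structural claims about \Balg\ (each normalization realizable inside the stabilizer by ESOs together with a diagonal in $D_n^Q$, and the same-chain exclusion); the direct leading-entry computation above makes all of that unnecessary.
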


\begin{proof}
Since $B_n=D_n\ltimes U_n$, there exist $D\in D_n$ and $U\in U_n$ such that $A^\infty=(DU)A(DU)^{-1}=D(UAU^{-1})D^{-1}$. Let
$\widetilde{A^\infty}:=UAU^{-1}$. 
We first prove that $\widetilde{A^\infty}\in QU_n$. Notice that $A^\infty\in QU_n$ by Theorem \ref{thm: Bform in right coset}, and
$\widetilde{A^\infty}$ and $A^\infty=D\widetilde{A^\infty}D^{-1}$ have the same places of nonzero entries by Theorem \ref{thm: D_n similarity}.
Using Lemma \ref{thm: subperm coset}, it suffices to show that the leading nonzero entry of each nonzero row of $\widetilde{A^\infty}$ equals $1$. 
Let $R_i(C)$ denote the $i$th row of a matrix $C$.  By $A\in QU_n$, we have $AU^{-1}\in QU_n$ so that
all nonzero rows $R_i(AU^{-1})$ have distinct places of leading nonzero entries $1$. 
Let $U:=[u_{i,j}]$. 
Then $\widetilde{A^\infty}=U(AU^{-1})$ implies that for $i\in[n]$:
$$R_i(\widetilde{A^\infty})=R_i(AU^{-1})+u_{i,i+1}R_{i+1}(AU^{-1})+\cdots+u_{i,n}R_{n}(AU^{-1}).$$
Suppose $R_i(\widetilde{A^\infty})$ is a nonzero row for a given $i$. Then $R_i(\widetilde{A^\infty})$, $R_i(A^\infty)$, and $R_i(AU^{-1})$ have the same places of leading nonzero entries as $Q$ does. 
Moreover, every $u_{i,j}\ne 0$ for $i<j\le n$ implies that either $R_{j}(AU^{-1})$ is zero or the place of leading nonzero entry of $R_{j}(AU^{-1})$ is after that of $R_i(AU^{-1})$. 
Therefore, the leading nonzero entry of $R_i(\widetilde{A^\infty})$ equals that of $R_i(AU^{-1})$, namely $1$. We get $\widetilde{A^\infty}\in QU_n$. 

Finally, Theorem \ref{thm: ESO on coset} shows that $A$ can be  transformed via a finite number of ESOs stabilizing $Q U_n$
to   $\widetilde{A^\infty}$, and $\widetilde{A^\infty}$ is $D_n$-similar to $A^\infty$. 
\end{proof}

In summary, here is a simplification process to get \Bform\ $A^\infty$ of a given $A\in N_n$ under the $B_n$-similarity:
\begin{enumerate}
\item  
Use elementary row and column operations (cf. the proof of Lemma \ref{thm: double coset}) to factorize $A=B  Q B'$ for $B, B'\in B_n$
and $Q\in N_n$ is the subpermutation  determined by $ \{r^{i,j}(A): i,j\in [n]\}$. Then $A\SIM{B_n} QB'B$.

\item Write $B'B=DU$ for $D\in D_n$ and $U\in U_n$. Find $D'\in D_n$ such that $D'QD(D')^{-1}=Q$. Then 
$$QB'B=QDU\SIM{D_n} D' QDU (D')^{-1}=QD'U(D')^{-1}\in QU_n.$$

\item Use a sequence of ESOs stabilizing $QU_n$ to simplify $QD'U(D')^{-1}$ to a matrix $\widetilde{A^\infty}$ which is $D_n$-similar to   $A^\infty$ (cf. Theorem \ref{thm: ESO on coset} and Theorem \ref{thm: U_n similar to Bform}). Then determine $A^\infty$ (cf. Theorem \ref{thm: D_n similarity}). 

\end{enumerate}

We will explore the details of step (3) above in the coming sections.

\section{Graph representations and graph operations}\label{sect: graph operations}

In this section,  given a subpermutation   $Q\in N_n$, we use graph representations to visualize matrices in $QU_n$,
then use graph operations to visualize ESOs on matrices in $QU_n$.

\subsection{Graph representation of matrices in $QU_n$}\label{sect: graph representation}

Every $A=[a_{ij}]\in M_n$ is the adjacency matrix of a  directed graph 
$G_A=(V_A,E_A)$ with a weight function $w_A:[n]\times [n]\to \F\setminus\{0\}$ whose support is $E_A$. Precisely, 
\begin{equation}
V_A=[n];\qquad 
E_A=\{(i,j)\in[n]\times [n]: a_{ij}\ne 0\};\qquad
w_A(i,j)=a_{ij}. 
\end{equation}
Each element of $V_A$ (resp. $E_A$) is called a {\em vertex} (resp. an {\em arc}) of the graph $G_A$. 
Each arc $(i,j)\in E_A$  is visualized as  $i\ra j$, in which $i$ (resp. $j$) is called the {\em tail} (resp. the {\em  head}) of the arc $(i,j)$, and $w_A(i,j)$ is called {\em the weight} of the arc $(i,j)$. 
Call $G_A=(V_A,E_A)$ {\em the graph   of $A$}, and
$\widetilde{G}_A =(V_A,E_A,w_A)$ {\em the weighted graph   of $A$}, respectively. 

When $A\in N_n$, the graph of $A$ is simple and it consists of some arcs    $(i,j)\in[n]\times [n]$ with $i<j$.

A partition of $[n]$ has the form $[n]=S_1\cup\cdots\cup S_m$ where each partition subset $S_i\ne\emptyset$. For the uniqueness   of expression, we assume that
the minimal elements of $S_1,\ldots,S_m$ are in ascending order, and write the partition as $\widetilde S_1|\cdots| \widetilde S_m$ where $\widetilde S_i$ is the list of elements of $S_i$ in ascending order. For example, the parition $\{5,6\}\cup\{7,3\}\cup\{2,4,1\}$ of $[7]$ will be expressed as $124|37|56$ (for $n>9$, we will add spaces between neighboring numbers).

\begin{lem}\label{thm: graph partition subpermutation}
Given a subpermutation $Q\in N_n$,  the graph $G_Q$  of $Q$ consists of finite connected components, 
  each of which is a  directed path 
of the form:
\begin{equation}\label{chain}
i_1\lra i_2\lra\cdots\lra i_p,\qquad i_1<\cdots<i_p,\quad p\in\Z^+.
\end{equation}
There is a bijective correspondence between the set of all subpermutations in $N_n$ and the set of all paritions of $[n]$, in which 
$Q$ corresponds to the partition ${\mc P}_Q$ of the union of the sets $\{i_1,\ldots,i_p\}$, namely, the $(i,j)$ entry of $Q$ is nonzero if and only if
$i<j$ are sequential elements in a partition subset of ${\mc P}_Q$. 
\end{lem}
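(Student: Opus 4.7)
The plan is to decode the subpermutation condition into two combinatorial constraints on $G_Q$ and then argue that these constraints force each connected component to be a single directed path.

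First, I would observe that $Q \in N_n$ being a subpermutation translates to the following graph-theoretic conditions. The row condition (at most one nonzero entry per row) says that every vertex of $G_Q$ has out-degree at most $1$; the column condition says that every vertex has in-degree at most $1$; and $Q \in N_n$ forces every arc $(i,j) \in E_Q$ to satisfy $i < j$, so $G_Q$ is acyclic. Any connected graph in which every vertex has in- and out-degree at most $1$ is either a directed cycle or a directed simple path, and acyclicity rules out the cycle case. Arcs going from smaller to larger indices then force the unique path in each component to be of the form $i_1 \to i_2 \to \cdots \to i_p$ with $i_1 < i_2 < \cdots < i_p$.

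Next I would set up the bijection explicitly. Given $Q$, define $\mathcal{P}_Q$ to be the partition of $[n]$ whose parts are the vertex sets of the connected components of $G_Q$ (with isolated vertices contributing singleton parts). Conversely, given a partition $[n] = S_1 \cup \cdots \cup S_m$ with each $S_k = \{i_1^{(k)} < \cdots < i_{p_k}^{(k)}\}$, set
\[
Q_{\mathcal{P}} \;:=\; \sum_{k=1}^{m} \sum_{l=1}^{p_k - 1} E_{i_l^{(k)},\, i_{l+1}^{(k)}}.
\]
A direct check shows $Q_{\mathcal{P}} \in N_n$ is a subpermutation: rows indexed by the maximum of each part are zero (and all other rows have a single $1$), and similarly for columns indexed by the minimum of each part.

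Finally I would verify the two constructions are mutually inverse. The graph of $Q_{\mathcal{P}}$ has, in each component, the arcs $i_l^{(k)} \to i_{l+1}^{(k)}$, so its connected components recover the parts $S_k$ exactly; conversely, reading off the paths of $G_Q$ recovers $Q$ from its partition because the positions of the $1$'s in $Q$ are precisely the consecutive pairs within the ordered parts. The last sentence of the lemma (that the $(i,j)$ entry of $Q$ is nonzero iff $i < j$ are sequential in a part of $\mathcal{P}_Q$) is then just a restatement of this inverse relationship. The only non-routine step is the small graph-theoretic observation at the start, and it is essentially immediate once the degree bounds and acyclicity are recorded.
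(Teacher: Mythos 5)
Your proof is correct and follows the same route as the paper: translate the subpermutation and nilpotency conditions into in-degree, out-degree, and ordering constraints on $G_Q$, conclude each weakly connected component is an increasing directed path, and read off the bijection with partitions of $[n]$. The paper's own proof is just a terser version of this (it dismisses the bijection as obvious, whereas you write out the inverse maps explicitly), so there is nothing substantively different to flag.
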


\begin{proof} Since $Q\in N_n$, the graph $G_Q$ only contains arcs  $(i,j)$ with $i<j$. Since $Q$ is a subpermutation, each row and column of $Q$ has at most one nonzero entry, so that each vertex $i$ of $G_Q$ is the head (resp. the tail) of at most one arc. Therefore, each connected component of $G_Q$ must have the form \eqref{chain}. The rest is obvious.
\end{proof}

We call each connected component subgraph \eqref{chain} of $G_Q$ a {\em chain of $G_Q$}. So the graph $G_Q$ is a union of finite disconnected chains. 
$G_Q$ is connected if and only if $Q$ is indecomposable.  When $Q$ is fixed, in the chain \eqref{chain}:
\begin{itemize}
\item $i_1$ (resp. $i_p$) is called {\em the chain tail} (resp. {\em the chain head}) of the chain \eqref{chain};
\item for each $k\in [p-1]$, $i_{k+1}$ is called {\em the chain successor} of $i_k$, denoted by
${i_k}^+=i_{k+1}$; and $i_{k}$ is called {\em the chain predecessor} of $i_{k+1}$, denoted by ${i_{k+1}}^-=i_k$.
\end{itemize}
We   call the partition ${\mc P}_Q$ in Lemma \ref{thm: graph partition subpermutation} {\em the partition of $Q$}. 
 ${\mc P}_Q$ also determines the permutation matrices $P$ in which each $PQP^T$ is a direct sum of indecomposable submatrices.


Lemma \ref{thm: subperm coset} for a subpermutation $Q\in N_n$ can be rephrased in graphs as follows. 

\begin{lem}\label{thm: QU_n graph}
$A\in M_n$ is in $QU_n$ for a subpermutation $Q\in N_n$ if and only if the weighted graph of $A$ satisfies the following conditions: 
\begin{enumerate}
\item $E_A\supseteq E_Q$ and the weights $w_A(i,j)=w_Q(i,j)=1$ for all $(i,j)\in E_Q$;
\item each $(i,j)\in E_A\setminus E_{Q}$ satisfies that $i$ is not the chain head of any chain of $G_Q$, and $i<i^+<j$ where $(i,i^+)\in E_Q$. 
\end{enumerate}
\end{lem}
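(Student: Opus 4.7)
The plan is to obtain this lemma by translating Lemma \ref{thm: subperm coset} into the graph language set up just before, using the chain decomposition from Lemma \ref{thm: graph partition subpermutation}. Writing $Q=\sum_{i\in I} E_{i,\sigma(i)}$ as in \eqref{subpermutation}, the first step is to identify the bijection $\sigma$ with the chain-successor map: for $i\in I$ the vertex $i$ is the tail of the unique arc $(i,\sigma(i))\in E_Q$, which by Lemma \ref{thm: graph partition subpermutation} means $\sigma(i)=i^+$. In particular, $I$ is exactly the set of vertices of $G_Q$ that are \emph{not} chain heads, and $[n]\setminus I$ is exactly the set of chain heads.

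For the forward direction, suppose $A=[a_{ij}]\in QU_n$. By Lemma \ref{thm: subperm coset}(2), $a_{i,i^+}=1$ for each $i\in I$, so every arc $(i,i^+)\in E_Q$ belongs to $E_A$ with weight $1$; this establishes condition (1) of the present lemma. Next, take any $(i,j)\in E_A\setminus E_Q$. Then $a_{ij}\ne 0$, so row $i$ is a nonzero row of $A$; Lemma \ref{thm: subperm coset}(1) forces $i\in I$, hence $i$ is not a chain head and $i^+$ is defined with $(i,i^+)\in E_Q$. Moreover, because $a_{i,i^+}=1$ is the \emph{first} nonzero entry of row $i$ by Lemma \ref{thm: subperm coset}(2), any other nonzero column index $j$ in that row satisfies $j>i^+$. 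Combining with $A\in N_n$, which forces $i<j$, we get $i<i^+<j$, establishing condition (2).

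For the converse, assume $A$ satisfies the two graph conditions. Condition (1) gives $a_{i,i^+}=1$ for every $i\in I$, so row $i$ is nonzero, while for $i\in [n]\setminus I$ (a chain head) no arc of $E_Q$ emanates from $i$, and condition (2) forbids any arc of $E_A\setminus E_Q$ from having tail $i$; therefore row $i$ of $A$ is zero. This matches Lemma \ref{thm: subperm coset}(1). Finally, for $i\in I$, any nonzero entry $a_{ij}$ with $j\ne i^+$ corresponds to an arc $(i,j)\in E_A\setminus E_Q$, which by condition (2) satisfies $j>i^+$; thus $a_{i,i^+}=1$ is indeed the first nonzero entry of row $i$, verifying Lemma \ref{thm: subperm coset}(2). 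Hence $A\in QU_n$.

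The argument is essentially a dictionary translation, so I do not anticipate a genuine obstacle; the only point requiring care is making sure that the quantifier ``$i$ is not a chain head'' is read off correctly from ``row $i$ of $Q$ is nonzero,'' and that ``first nonzero entry of row $i$ occurs at column $i^+$'' is faithfully captured by the inequality $i^+<j$ for every additional arc $(i,j)\in E_A\setminus E_Q$.
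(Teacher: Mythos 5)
Your proof is correct and follows essentially the same route as the paper: both directions are a direct translation of Lemma \ref{thm: subperm coset} into the chain/graph language, with the identifications $\sigma(i)=i^+$ and $I=[n]\setminus S_h$ (your converse is just spelled out more fully than the paper's one-line remark). One tiny point of precision: the inequality $i<i^+$ comes from $Q\in N_n$ (so $\sigma(i)>i$), not from $A\in N_n$ giving $i<j$; this is immediate and does not affect the argument.
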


\begin{proof} Suppose $A\in QU_n$ in which $Q\in N_n$ is  a subpermutation.
Write $Q=\sum_{i\in I} E_{i,\sigma(i)}$ for $I\subseteq [n]$. Then $i^+=\sigma(i)$ for all $i\in I$. Moreover, $i$ is a chain head of $G_Q$ if and only if $i\in [n]\setminus I$. Lemma \ref{thm: subperm coset} (2) shows that $\widetilde{G}_A$ contains $\widetilde{G}_Q$ as a weighted subgraph. 
Given $(i,j)\in E_A\setminus E_Q$,  Lemma \ref{thm: subperm coset} (1) shows that $i$ is not a chain head of $G_Q$, and 
Lemma \ref{thm: subperm coset} (2) and the assumption $Q\in N_n$  show that $i<\sigma(i)=i^+<j$. 

The converse statement also holds by Lemma \ref{thm: subperm coset}.
\end{proof}

In Lemma \ref{thm: QU_n graph}, $G_A$  contains $G_Q$ as a subgraph. 
When $A\in QU_n$ for  a subpermutation $Q$, we call each element of $E_A\setminus E_Q$  an {\em extra arc of $G_A$}. 
We denote the {\em graph type} of $A$ as ${\mc P}_Q: i_1j_1|\cdots|i_t j_t$ where ${\mc P}_Q$ is the partition corresponding to $Q$ and 
$(i_1,j_1),\ldots,(i_t,j_t)$ are the extra arcs of $G_A$ listed in ascending \Bel `s order \eqref{Border}. If $A$ has no extra arc (i.e. $A=Q$), 
its graph type is denoted as ${\mc P}_Q:\emptyset$. 
The graph type of $A$ is a concise expression of the graph $G_A$. 

\begin{ex}\label{ex: graph of QU_n}
Let $n=7$. Let 
$$Q=\smtx{0 &1&0&0&0&0&0\\&0&0&1&0&0&0\\&&0&0&0&0&1\\&&&0&0&0&0\\&&&&0&1&0\\&&&&&0&0\\&&&&&&0}\in N_7,\qquad
A=Q\smtx{1 &*  &*  &*  &*  &*  &*  \\ &1&0&3 &-2 &0 &1\\&&1&*  &*  &*  &*  \\&&&1&-1&0&0\\&&&&1&* &* \\&&&&&1&0\\&&&&&&1}=
\smtx{0 &1&0&3 &-2 &0 &1\\&0&0&1&-1&0&0\\&&0&0&0&0&1\\&&&0&0&0&0\\&&&&0&1&0\\&&&&&0&0\\&&&&&&0}.
$$
The subpermutation $Q$ corresponds to the partition ${\mc P}_Q=124|37|56$ of $[7]$. The graph of $Q$ is
$G_Q=([7], E_Q)$ in which $E_Q=\{(1,2),(2,4),(3,7),(5,6)\}$. The graph of $A$ is $G_A=([7],E_A)$ in which $E_A=E_Q\cup\{(1,4),(1,5),(1,7),(2,5)\}.$
So  the  graph type of $A$ is $124|37|56: 25|14|15|17$. 
\\ \\
\twopage{0.59}{0.42}
{
In the graph on the right, $\wt{G}_Q$  consists of three chains formed by black arcs with weights $1$, and $\wt{G}_A$ has the extra arcs with weights marked in red. By Lemma \ref{thm: QU_n graph}, an arc like $(4,7)$ or $(3,5)$ cannot be an extra arc of $G_A$. 
}
{
\begin{tikzpicture}
\node (1) at (0,2) {$1$};
\node (2) at (0.8,2) {$2$};
\node (3) at (1.6,1.3) {$3$};
\node (4) at (2.4,2) {$4$};
\node (5) at (3.2,0.6) {$5$};
\node (6) at (4,0.6) {$6$};
\node (7) at (4.8,1.3) {$7$};

	\foreach \from/\to in {1/2,2/4,3/7,5/6}
            \draw[edge] (\from) -- (\to);

	\foreach \from/\to in {2/5,1/7}
            \path[Redge] (\from) -- (\to);
	\node at (2.6,1.1) {\red{\tiny $-1$}}; 
	\node at (2.9,1.7) {\red{\tiny $1$}}; 

	\foreach \from/\to in {1/4}
            \draw[Redge] (\from) to [bend left = 20] (\to);
	\node at (1.2, 2.4) {\red{\tiny $3$}};  
                                                
	\foreach \from/\to in {1/5}
            \draw[Redge] (\from) to [bend right = 10] (\to);
	\node at (1.2,1.0) {\red{\tiny $-2$}};  

\end{tikzpicture}
}
\end{ex}

\subsection{The elementary $U_n$-similarity graph operations on $QU_n$}
The ESO in Definition \ref{def: ESO} can be rephrased using graph operations. 
Let $\lambda\in\F$ and $(p,q)\in [n]\times [n]$ with $p<q$.
For $A=[a_{ij}]\in N_n$, \eqref{elementary similarity} shows that
$$
\O_{p,q}^{\lambda}(A)=A+\sum_{\substack{j\in [n]\\ a_{qj}\ne 0}}\lambda a_{qj}E_{pj}-\sum_{\substack{i\in [n]\\a_{ip}\ne 0}}  \lambda a_{ip}E_{iq}.
$$
Let $A'=[a_{ij}']:=\O_{p,q}^{\lambda}(A)$. 
The changes made by $\O_{p,q}^{\lambda}$  from $\widetilde{G}_A$ to $\widetilde{G}_{A'}$ are below:
\begin{enumerate}
\item whenever $(i,p)\in E_A$ (i.e. $a_{ip}\ne 0$),  
\begin{equation}\notag
w_{A'}(i,q)=a_{iq}'=a_{iq}-\lambda a_{ip}=w_A(i,q)-\lambda w_A(i,p);
\end{equation}
\item whenever $(q,j)\in E_A$ (i.e. $a_{qj}\ne 0$), 
\begin{equation}\notag
w_{A'}(p,j)=a_{pj}'=a_{pj}+\lambda a_{qj}=w_A(p,j)+\lambda w_A(q,j).
\end{equation}
\end{enumerate}
These changes are visualized as follows, in which a red arc indicates a change of the weight, and a dashed arc indicates that the weight may be zero: 
\begin{equation}\label{ESGO}
\begin{array}{ccc}
\widetilde{G}_A & \overset{\O_{p,q}^{\lambda}}{\Longrightarrow}
&  \widetilde{G}_{A'}
\\
\begin{tikzpicture}
\node (1) at (0,2) {$i$};
\node (2) at (1.4,2) {$p$};
\node (3) at (2.1,1) {$q$};
\node (4) at (3.5,1) {$j$};
\node at (1.75,1.5)  {{\tiny $<$}};
\node at (0.7,2.3) {{\tiny $a_{ip}$}};
\node at (2.8,1) {{\tiny $a_{qj}$}};
\node at (1.05,1.2) {{\tiny $a_{iq}$}};
\node at (2.45,1.9)  {{\tiny $a_{pj}$}};
	\foreach \from/\to in {1/2}
           \draw [edge] (1) to [bend left = 20] (2);
	\foreach \from/\to in {3/4}
           \draw [edge] (3) to [bend right = 20] (4);
	\foreach \from/\to in {1/3}
           \draw [edge, dashed] (1) to [bend right = 10] (3);
	\foreach \from/\to in {2/4}
           \draw [edge, dashed] (2) to [bend left = 10] (4);
\end{tikzpicture}
& \overset{\O_{p,q}^{\lambda}}{\Longrightarrow}
&
\begin{tikzpicture}
\node (1) at (0,2) {$i$};
\node (2) at (1.4,2) {$p$};
\node (3) at (2.1,1) {$q$};
\node (4) at (3.5,1) {$j$};
\node at (1.75,1.5)  {{\tiny $<$}};
\node at (0.7,2.3) {{\tiny $a_{ip}$}};
\node at (2.8,1) {{\tiny $a_{qj}$}};
\node at (0.6,1.25) {\red{\tiny $a_{iq}-\lambda a_{ip}$}};
\node at (2.9,1.85)  {\red{\tiny $a_{pj}+\lambda a_{qj}$}};
	\foreach \from/\to in {1/2}
           \draw [edge] (1) to [bend left = 20] (2);
	\foreach \from/\to in {3/4}
           \draw [edge] (3) to [bend right = 20] (4);
	\foreach \from/\to in {1/3}
           \draw [edge, red, dashed] (1) to [bend right = 10] (3);
	\foreach \from/\to in {2/4}
           \draw [edge, red, dashed] (2) to [bend left = 10] (4);
\end{tikzpicture}
\end{array}
\end{equation}
By abuse of language, we also call transformation  \eqref{ESGO} the 
{\em elementary $U_n$-similarity operation (ESO)} $\O_{p,q}^{\lambda}$ on the weighted graph $\wt{G}_A$, denoted by $\O_{p,q}^{\lambda}(\wt{G}_A)=\wt{G}_{A'}.$

Given a matrix $A=[a_{ij}]\in QU_n$ where $Q\in N_n$ is a subpermutation,  Theorem \ref{thm: U_n similar to Bform} shows that
$A$ could be transformed via a sequence of ESOs   stabilizing $QU_n$     to a matrix $\wt{A^\infty}\in QU_n$ which is $D_n$-similar to   \Bform \ $A^\infty$.
By Theorem \ref{thm: D_n similarity}, $\wt{A^\infty}$ and $A^\infty$ have the same places of nonzero entries, that is, $G_{\wt{A^\infty}}=G_{A^\infty}$,
and the relation of weight functions \eqref{D_n similarity relation} holds on each undirected cycle of $G_{\wt{A^\infty}}$.
Therefore, we will use ESOs \eqref{ESGO} to eliminate ``redundant arcs''  on $\wt{G}_A$ following \Border\ until we reach $\wt{G}_{\wt{A^\infty}}$; then we adjust the weights  on undirected cycles of $\wt{G}_{\wt{A^\infty}}$ following \Border\ and get $\wt{G}_{A^\infty}$. 

\begin{lem}\label{thm: ESO necessary condition}
Let $A=[a_{ij}]\in N_n$. An arc $(i,j)$ of $\wt{G}_A=([n],E_A, w_A)$ can be eliminated by an ESO only if one of the following two cases happens:
\begin{enumerate}
\item
there is $p$ such that $i<p<j$ and $(i,p)\in E_A$, in which $\O_{p,j}^{\lambda}(\wt{G}_A)$  for $\lambda=\frac{a_{ij}}{a_{ip}}$ has no arc $(i,j)$; 

\item
there is $q$ such that $i<q<j$ and $(q,j)\in E_A$, in which $\O_{i,q}^{\lambda}(\wt{G}_A)$  for $\lambda=-\frac{a_{ij}}{a_{qj}}$ has no arc $(i,j)$; 

\end{enumerate}
\end{lem}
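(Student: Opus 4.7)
The plan is to read off the result directly from the ESO formula \eqref{elementary similarity}, which I recall here: for $A=[a_{ij}]\in N_n$ and an ESO $\mathcal{O}_{p,q}^{\mu}$ with $p<q$,
\[
\mathcal{O}_{p,q}^{\mu}(A)=A+\sum_{\substack{j'\in[n]\\ a_{qj'}\ne 0}}\mu\, a_{qj'}E_{pj'}-\sum_{\substack{i'\in[n]\\ a_{i'p}\ne 0}}\mu\, a_{i'p}E_{i'q}.
\]
Hence the only entries whose values change under $\mathcal{O}_{p,q}^{\mu}$ are of the form $(i',q)$ with $(i',p)\in E_A$ (and $i'<p$), or $(p,j')$ with $(q,j')\in E_A$ (and $j'>q$). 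In particular, if a given arc $(i,j)$ of $\wt{G}_A$ is eliminated by some ESO $\mathcal{O}_{p,q}^{\mu}$, then $(i,j)$ must belong to one of these two classes.

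First I would dispose of the trivial pairs $p=q$; they are not ESOs. Then I would split into the two possibilities. In the first possibility, $(i,j)=(i',q)$ for some $i'<p$, so $j=q$ and there exists $p$ with $i<p<j$ and $(i,p)\in E_A$. The new $(i,j)$-entry is
\[
a_{ij}-\mu\, a_{ip},
\]
which vanishes iff $\mu=a_{ij}/a_{ip}$; this is case (1) with ESO $\mathcal{O}_{p,j}^{\lambda}$ and $\lambda=a_{ij}/a_{ip}$. In the second possibility, $(i,j)=(p,j')$ for some $j'>q$, so $i=p$ and there exists $q$ with $i<q<j$ and $(q,j)\in E_A$. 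The new $(i,j)$-entry is
\[
a_{ij}+\mu\, a_{qj},
\]
which vanishes iff $\mu=-a_{ij}/a_{qj}$; this is case (2) with ESO $\mathcal{O}_{i,q}^{\lambda}$ and $\lambda=-a_{ij}/a_{qj}$.

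I do not anticipate any real obstacle: once one unpacks \eqref{elementary similarity} and reads off the support of the ``change'', the two cases and the unique value of $\lambda$ in each follow immediately. The only mildly subtle point to flag is that the lemma states a \emph{necessary} condition (``only if''), so the argument is one-directional and does not require checking that no other side-effect of the operation breaks something else; the ESO may of course create or alter other arcs, but that is irrelevant to the claim about eliminating the specific arc $(i,j)$.
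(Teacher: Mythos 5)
Your proposal is correct and follows essentially the same route as the paper: the paper's proof simply cites the ESO formula (its graph form \eqref{ESGO}, equivalently \eqref{elementary similarity}), and your argument just writes out explicitly that the only entries altered by $\O_{p,q}^{\mu}$ are $(i',q)$ with $(i',p)\in E_A$ or $(p,j')$ with $(q,j')\in E_A$, from which the two cases and the unique $\lambda$ follow.
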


\begin{proof} The statement is a direct consequence of \eqref{ESGO}.
\end{proof}

By Theorem \ref{thm: U_n similar to Bform}, when $A\in QU_n$ for a subpermutation $Q\in N_n$, we should try to eliminate the extra arcs of $\wt{G}_A$
by ESOs stabilizing $QU_n$.  So in practice, not every ESO satisfying conditions in Lemma \ref{thm: ESO necessary condition} will be considered. 

\begin{ex}\label{ex: find Bform}
Let subpermutation $Q\in N_7$ and matrix $A\in QU_7$ be given in Example \ref{ex: graph of QU_n}. The extra arcs in $\wt{G}_A$ sorted by \Border\ are:
$(2,5)\prec (1,4)\prec (1,5)\prec (1,7)$.  We  test and  eliminate them by ESOs stabilizing $QU_7$ in this order. The first arc $(2,5)$ cannot be eliminated, since 
the only type of ESOs that can modify the weight of $(2,5)$ is $\O_{4,5}$ which creates the arc $(4,6)$ and does not stabilize $QU_7$. Then:
\begin{align*}
&&
\begin{tikzpicture}
\node (1) at (0,2) {$1$};
\node (2) at (0.8,2) {$2$};
\node (3) at (1.6,1.5) {$3$};
\node (4) at (2.4,2) {$4$};
\node (5) at (3.2,1) {$5$};
\node (6) at (4,1) {$6$};
\node (7) at (4.8,1.5) {$7$};
	\foreach \from/\to in {1/2,2/4,3/7,5/6}
            \draw[edge] (\from) -- (\to);
	\foreach \from/\to in {2/5,1/7}
            \path[Redge] (\from) -- (\to);
	\node at (2.65,1.35) {\red{\tiny $-1$}}; 
	\node at (2.9,1.85) {\red{\tiny $1$}}; 
	\foreach \from/\to in {1/4}
            \draw[Redge] (\from) to [bend left = 20] (\to);
	\node at (1.2, 2.4) {\red{\tiny $3$}};  
	\foreach \from/\to in {1/5}
            \draw[Redge] (\from) to [bend right = 10] (\to);
	\node at (1.1,1.3) {\red{\tiny $-2$}};  
\end{tikzpicture}
&\overset{\O_{2,4}^{3}}{\Lra}&
\begin{tikzpicture}
\node (1) at (0,2) {$1$};
\node (2) at (0.8,2) {$2$};
\node (3) at (1.6,1.5) {$3$};
\node (4) at (2.4,2) {$4$};
\node (5) at (3.2,1) {$5$};
\node (6) at (4,1) {$6$};
\node (7) at (4.8,1.5) {$7$};
	\foreach \from/\to in {1/2,2/4,3/7,5/6}
            \draw[edge] (\from) -- (\to);
	\foreach \from/\to in {2/5,1/7}
            \path[Redge] (\from) -- (\to);
	\node at (2.65,1.35) {\red{\tiny $-1$}}; 
	\node at (2.9,1.85) {\red{\tiny $1$}}; 
	\foreach \from/\to in {1/5}
            \draw[Redge] (\from) to [bend right = 10] (\to);
	\node at (1.1,1.3) {\red{\tiny $-2$}};  
\end{tikzpicture}
\\
&\overset{\O_{2,5}^{-2}}{\Lra}&
\begin{tikzpicture}
\node (1) at (0,2) {$1$};
\node (2) at (0.8,2) {$2$};
\node (3) at (1.6,1.5) {$3$};
\node (4) at (2.4,2) {$4$};
\node (5) at (3.2,1) {$5$};
\node (6) at (4,1) {$6$};
\node (7) at (4.8,1.5) {$7$};
	\foreach \from/\to in {1/2,2/4,3/7,5/6}
            \draw[edge] (\from) -- (\to);
	\foreach \from/\to in {2/5,1/7}
            \path[Redge] (\from) -- (\to);
	\node at (2.65,1.35) {\red{\tiny $-1$}}; 
	\node at (2.9,1.85) {\red{\tiny $1$}}; 
	\draw[Redge] (2) to [bend left = 10] (6); 
	\node at (3.6,1.35) {\red{\tiny $-2$}};  
\end{tikzpicture}
&\overset{\O_{4,6}^{-2}}{\Lra}&
\begin{tikzpicture}
\node (1) at (0,2) {$1$};
\node (2) at (0.8,2) {$2$};
\node (3) at (1.6,1.5) {$3$};
\node (4) at (2.4,2) {$4$};
\node (5) at (3.2,1) {$5$};
\node (6) at (4,1) {$6$};
\node (7) at (4.8,1.5) {$7$};
	\foreach \from/\to in {1/2,2/4,3/7,5/6}
            \draw[edge] (\from) -- (\to);
	\foreach \from/\to in {2/5,1/7}
            \path[Redge] (\from) -- (\to);
	\node at (2.65,1.35) {\red{\tiny $-1$}}; 
	\node at (2.9,1.85) {\red{\tiny $1$}}; 
\end{tikzpicture}
\\
&\overset{\O_{2,7}^{1}}{\Lra}&
\begin{tikzpicture}
\node (1) at (0,2) {$1$};
\node (2) at (0.8,2) {$2$};
\node (3) at (1.6,1.5) {$3$};
\node (4) at (2.4,2) {$4$};
\node (5) at (3.2,1) {$5$};
\node (6) at (4,1) {$6$};
\node (7) at (4.8,1.5) {$7$};
	\foreach \from/\to in {1/2,2/4,3/7,5/6}
            \draw[edge] (\from) -- (\to);
	\foreach \from/\to in {2/5}
            \path[Redge] (\from) -- (\to);
	\node at (2.65,1.35) {\red{\tiny $-1$}}; 
\end{tikzpicture}
\end{align*}
Hence $\O_{2,7}^{1}\O_{4,6}^{-2}\O_{2,5}^{-2}\O_{2,4}^{3}(A)=\wt{A^\infty}$ in which $\wt{G}_{\wt{A^\infty}}$ is the last weight graph above. 
There is no undirected cycle on $\wt{G}_{\wt{A^\infty}}$.  So $\wt{A^\infty}$ is $D_7$-similar to \Bform\ $A^\infty$ whose weighted graph has weight $1$ on the arc $(2,5)$. In other words,
$$A\SIM{U_7}\wt{A^\infty}=Q-E_{2,5} \SIM{D_7}A^\infty =Q+E_{2,5}.
$$
\end{ex}

The elimination process in Example \ref{ex: find Bform} may be roughly abbreviated as changes on graph types as below, where an appropriate operation on
the first row causes the changes of arcs listed on the second row.
\begin{equation}\label{to get Bform}
\begin{matrix}
&\O_{2,4}
&\O_{2,5}
&\O_{4,6}
&\O_{2,7}
&
\\
124|37|56: 
25|14|15|17 
&-14
&-15+26
&-26
&-17 
&= 25
\end{matrix}
\end{equation}
So \Bform \ of $A$ has the type $124|37|56: 25$. A process like \eqref{to get Bform}  only works for  generic cases, since the process omits all weight information; for some specific matrix, an ESO on its weighted graph may eliminate several extra arcs simultaneously and lead to a different type. 
However,  we will see that a graphical version of process \eqref{to get Bform} is powerful in classifying the forms of \Bform s under $B_n$-similarity.

Another observation about Example \ref{ex: find Bform} is that: unlike those ESOs in Theorem \ref{thm: ESO on coset}, in $\O_{2,7}^{1}\O_{4,6}^{-2}\O_{2,5}^{-2}\O_{2,4}^{3}(A)=\wt{A^\infty}$,
the pairs $(2,4), (2,5), (4,6), (2,7)$ do not completely follow \Border\ $\prec$. 
However, we check and (if possible) eliminate the extra arcs of $\wt{G}_A$ following \Border; the success of this process is guaranteed by the combination of \Balg \ and Theorem \ref{thm: ESO on coset}.

\section{Properties of \Bform s\ under $B_n$-similarity}\label{sect: Bform properties}

Given a matrix $A\in B_n QB_n$ or $QU_n$ where  $Q\in N_n$ is a subpermutation, Theorem \ref{thm: Bform in right coset} shows that \Bform\ $A^\infty\in QU_n$. 
Here we investigate the nonzero entries in $A^\infty$, or equivalently, what extra arcs and weights could be in $\wt{G}_{A^\infty}$. 
For simplicity, we assume that $A$ is already a \bform.

\subsection{Characterization of \Bform}

The  \eqref{D_n similarity relation} in Theorem \ref{thm: D_n similarity} indicates that
if the graph of a \bform\ has an undirected cycle, then at least one arc of this undirected cycle has a parameter weight. 
It derives the following results.

\begin{thm}\label{thm: Bform number of parameters}
Let $A\in N_n$ be a  \bform. 
If $G_A$ has $m$ connected components and $|E_A|=N$,
then $A$ has $m$ indecomposable components and $N-n+m$ parameters. 
\end{thm}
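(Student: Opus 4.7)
The plan is to prove the two assertions separately, using that $N-n+m$ is the cyclomatic number of $G_A$. For the number of indecomposable components, I would let $V_1,\ldots,V_m\subseteq[n]$ be the vertex sets of the connected components of the underlying undirected graph of $G_A$ and choose a permutation matrix $P$ arranging each $V_i$ as a contiguous block, so that $PAP^T=A_1\oplus\cdots\oplus A_m$ with $A_i$ the principal submatrix on $V_i$. Each $A_i$ has a connected graph and is therefore indecomposable, whereas any finer direct-sum decomposition would disconnect one of the $V_i$'s; hence $A$ has exactly $m$ indecomposable components.

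For the count of parameters, let $k$ be the number of chains of $G_Q$, so $|E_Q|=n-k$ and the number of extra arcs in $G_A$ is $|E_A\setminus E_Q|=N-n+k$. Let $T\subseteq D_n$ be the subgroup of diagonal matrices that are constant on each chain of $G_Q$; by Lemma \ref{thm: QU_n graph} and the analysis underlying Theorem \ref{thm: ESO on coset}, $T$ is exactly the stabilizer of $QU_n$ in $D_n$ under conjugation, and $\dim T=k$. Consider
\[ X_A:=\{C\in QU_n : G_C=G_A\}, \]
parameterized by the nonzero weights on the extra arcs, hence of dimension $N-n+k$. The torus $T$ acts on $X_A$ by conjugation, scaling the weight of each arc $(i,j)$ by $d_{C(i)}/d_{C(j)}$, where $C(i)$ denotes the chain of $G_Q$ containing $i$. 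The stabilizer of $A$ in $T$ consists of those $d$ constant on every connected component of $G_A$, hence is $m$-dimensional; so the $T$-orbit of $A$ has dimension $k-m$ and $X_A/T$ has dimension $(N-n+k)-(k-m)=N-n+m$. By Theorem \ref{thm: D_n similarity} these $T$-orbits coincide with the $D_n$-similarity classes in $X_A$, with associated invariants given by $N-n+m$ independent cycle products indexed by a basis of fundamental cycles of $G_A$.

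The remaining step is to identify this orbit space with the family of \bform s of graph type $G_A$. Distinct parameter values give distinct \bform s, which lie in distinct $B_n$-orbits and hence in distinct $T$-orbits in $X_A$, yielding (number of parameters)$\,\le N-n+m$. For the reverse, I would establish that the $B_n$-orbit of $A$ meets $X_A$ in exactly the $T$-orbit of $A$: given $C\in X_A$ with $C\SIM{B_n}A$, Theorem \ref{thm: U_n similar to Bform} produces $\wt C\in QU_n$ obtained from $C$ by ESOs and $D_n$-similar to the \bform of $C$, which must be $A$ by uniqueness; the relation $\wt C=DAD^{-1}$ with $D\in D_n$ stabilizing $QU_n$ forces $D\in T$, and $C$ being itself a \bform precludes any nontrivial further ESO reduction, placing $C$ in the $T$-orbit of $A$. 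The main obstacle is this identification, since \emph{a priori} a graph-preserving $U_n$-similarity might merge distinct $T$-orbits inside $X_A$; the argument circumvents this by invoking uniqueness of the \bform within a $B_n$-orbit together with Theorem \ref{thm: U_n similar to Bform}, which reduces every $B_n$-similarity inside $QU_n$ to an ESO step followed by a $T$-scaling.
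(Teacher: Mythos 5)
Your count of indecomposable components is exactly the paper's argument, and the torus bookkeeping in the middle of your proposal is sound as far as it goes: $T$ is indeed the stabilizer of $QU_n$ in $D_n$, its orbits in $X_A$ coincide with the $D_n$-similarity classes there, the stabilizer of $A$ in $T$ has one degree of freedom per connected component of $G_A$, and the quotient is cut out by $N-n+m$ independent cycle products. The genuine gap is the step you yourself flag as the main obstacle, and your proposed fix does not close it. For the lower bound you need that distinct $T$-orbits in $X_A$ lie in distinct $B_n$-orbits (equivalently, $B_n\cdot A\cap X_A=T\cdot A$), but your argument assumes what it must prove: for an arbitrary $C\in X_A$ with $C\SIM{B_n}A$, Theorem \ref{thm: U_n similar to Bform} only says that $C$ can be carried by a possibly nontrivial chain of ESOs stabilizing $QU_n$ to some $\wt{C}$ which is $D_n$-similar (hence, comparing leading entries, $T$-similar) to $A$; it does not place $C$ itself in $T\cdot A$. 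The phrase ``$C$ being itself a \bform\ precludes any nontrivial further ESO reduction'' is a non sequitur, since $C$ is not assumed to be a \bform; the possibility that a graph-preserving $U_n$-similarity alters cycle products and merges $T$-orbits is exactly what must be excluded, and nothing in the proposal excludes it. Without that, the dimension count of $X_A/T$ yields only the inequality (number of parameters) $\le N-n+m$, not the theorem.

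Two further points. First, identifying ``number of parameters'' with $\dim X_A/T$ is not the definition used in the paper (a parameter is an entry that \Balg\ cannot normalize to $0$ or $1$), and the dimension language tacitly assumes $\F$ is infinite, whereas the statement and the paper's proof are field-free. Second, the paper's route is more elementary and sidesteps your obstacle: working one connected component at a time, it observes that the weights on a connected spanning subgraph with $n_1-1$ arcs can be normalized to $1$ (Theorem \ref{thm: D_n similarity} together with \Balg), while each of the remaining $r_1-n_1+1$ arcs closes an undirected cycle and therefore, by the cycle-product identities \eqref{D_n similarity relation}, must carry a parameter weight; summing over the $m$ components gives $N-n+m$, with the exact placement of the parameters handled separately in Theorem \ref{thm: Bform places of parameters}. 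If you want to retain your orbit-space picture, the missing ingredient is an argument in the spirit of Theorems \ref{thm: Bform places of parameters} and \ref{thm: Bform no arcs} showing that ESOs stabilizing $QU_n$ cannot change the cycle products of a matrix in $X_A$ without changing its graph; granted that, $B_n\cdot A\cap X_A=T\cdot A$ and your count would go through.
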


\begin{proof}
If $G_A$ has $m$ connected components, then the vertex sets of these $m$ connected subgraphs form a partition of $[n]$. 
For each permutation matrix $P\in M_n$, the graphs $G_{PAP^T}$ and $G_A$ are isomorphic. 
There is a permutation matrix $P$ such that the vertex set of each   connected component of $G_{PAP^T}$ contains sequential integer(s). 
Then $PAP^T$ is a direct sum of $m$ principal submatrices, each of which is indecomposable. 
In other words, $A$ has $m$ indecomposable components.

If a connected component of $G_A$ has $n_1$ vertices and $r_1$ arcs, then  $r_1\ge n_1-1$.
When $r_1=n_1-1$, the connected component contains no undirected cycle so that all weights of its arcs are 1 by Theorem \ref{thm: D_n similarity} and
\Balg. 
When $r_1>n_1-1$, the connected component can be obtained by adding $r_1-n_1+1$ arcs to a connected subgraph with $n_1-1$ arcs, and adding each arc creates  an undirected cycle on the union of this arc and the subgraph.
Therefore, by Theorem \ref{thm: D_n similarity}, there are $r_1-n_1+1$ parameter weights on the arcs of this connected component.

Summing over all $m$ connected components of $G_A$,
we see that $A$ has $N-n+m$ parameters.
\end{proof}

\begin{rem}
In matrix way, Theorem \ref{thm: Bform number of parameters} says that:
if a \bform\ $A\in N_n$ is permutation similar to a direct sum of $m$ indecomposable squared submatrices, and $A$ has $N$ nonzero entries, then $A$ has $N-n+m$ parameters.
\end{rem}

The following two results describe an indecomposable \bform\ and its graph. They show that if the graph type or the places of nonzero entries of a \bform\ are known, then 
we can determine the amount and the places of parameters among these nonzero entries. 

\begin{cor}\label{thm: indecomposable Bform parameters}
Let $A\in N_n$ be a \bform. Then $A$ is indecomposable if and only if the graph $G_A=([n],E_A)$ is connected.  
Moreover, if $A$ is indecomposable with $N$ nonzero entries, then $A$ has $N-n+1$ parameters. 
\end{cor}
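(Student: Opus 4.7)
The plan is to obtain this corollary as a direct specialization of Theorem \ref{thm: Bform number of parameters}. That theorem already asserts that if $G_A$ has $m$ connected components, then $A$ has exactly $m$ indecomposable components and $N-n+m$ parameters. So the whole task is to read off both halves of the present statement by setting $m=1$.

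For the equivalence, I would argue as follows. Suppose $G_A$ splits into connected components with vertex sets $V_1,\ldots,V_m$; pick a permutation matrix $P$ that relabels $[n]$ so that each $V_i$ becomes a contiguous block of indices. Then $PAP^T$ is block-diagonal with principal blocks indexed by the $V_i$, and each block must itself be indecomposable because its graph is a single connected component. Conversely, any nontrivial decomposition $PAP^T = A_1 \oplus A_2$ into proper principal submatrices partitions the vertex set into two nonempty pieces with no arcs between them, which disconnects $G_A$. Hence $A$ is indecomposable if and only if $m=1$, i.e., $G_A$ is connected.

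For the parameter count, once indecomposability is identified with connectedness, I simply substitute $m=1$ into the expression $N-n+m$ from Theorem \ref{thm: Bform number of parameters}, giving $N-n+1$. There is really no obstacle here beyond the combinatorial tautology that direct-sum decompositions of $PAP^T$ correspond bijectively to disconnections of $G_A$; all the non-trivial content (namely, that cycles in $G_A$ are precisely the locations where parameters are forced by Theorem \ref{thm: D_n similarity} and \Balg) has already been absorbed into the proof of Theorem \ref{thm: Bform number of parameters}.
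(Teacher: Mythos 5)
Your proposal is correct and matches the paper's approach: the paper states this corollary without separate proof as an immediate specialization of Theorem \ref{thm: Bform number of parameters}, whose own proof already contains exactly the permutation-relabeling argument you spell out identifying direct-sum decompositions of $PAP^T$ with disconnections of $G_A$. Setting $m=1$ then yields both the equivalence and the count $N-n+1$, just as you say.
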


\begin{thm}\label{thm: Bform places of parameters}
Let $A\in QU_n$ be an indecomposable \bform\ in which $Q\in N_n$ is a subpermutation. 
List  the extra arcs of $G_A$ (i.e. the elements of $E_A\setminus E_Q$) in \Border:
\begin{equation}\label{extra arc Border}
(i_1,j_1)\prec (i_2,j_2) \prec\cdots\prec (i_t,j_t).
\end{equation}
Then the places of parameters of $A$ (if any) correspond to the marked extra arcs  determined by the following steps, 
starting at the graph $G:=G_Q$ in which all arcs in $E_Q$ are unmarked:
\begin{enumerate}
\item
add  the extra arcs of $G_A$ one at a time to $G$ according to \Border\ \eqref{extra arc Border}. 
\item
when adding an extra arc $(i,j)$ to $G$ creates an undirected cycle  in which none of the arcs is marked, 
mark the extra arc $(i,j)$  and continue;
 
\item 
repeat the steps (1) and (2) until all extra arcs of $G_A$ are gone through. 
\end{enumerate}
\end{thm}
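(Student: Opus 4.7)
My plan is to establish the theorem in two parts: a cardinality match between parameters and marked arcs, and then a position-by-position identification of the two sets.

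First, the count. By Corollary \ref{thm: indecomposable Bform parameters}, since $A$ is an indecomposable \bform, the number of parameters equals $|E_A|-n+1$. On the graph side I would show that the unmarked subgraph produced by the algorithm is a spanning tree of $G_A$. For this, observe that the algorithm is Kruskal-like: an arc is left unmarked precisely when its endpoints lie in different connected components of the current unmarked subgraph, so the unmarked subgraph is always a forest (starting from the forest $G_Q$ of chains described in Lemma \ref{thm: graph partition subpermutation}). To see that it actually spans $G_A$, use that $G_A$ is connected (since $A$ is indecomposable) together with the observation that marking an arc $(i,j)$ happens only when $i$ and $j$ are already joined by an unmarked path; hence connectivity of the unmarked subgraph tracks that of $G_A$ at every step, and at the end gives a single component. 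Therefore the final unmarked subgraph has $n-1$ arcs and the marked arcs number $|E_A|-(n-1)=|E_A|-n+1$, matching the number of parameters.

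Second, the identification of positions. I would run \Balg\ on $A$ and use induction on \Border\ to show that each extra arc of $G_A$ is assigned value $1$ (\Balg's Case (b)) if it is unmarked, and a parameter value (\Balg's Case (c)) if it is marked. The base step is Lemma \ref{thm: subperm coset}, which forces weight $1$ on every arc of $E_Q$. For an extra arc $(i,j)$ reached in \Border, the inductive hypothesis is that all previously processed extra arcs of $G_A$ already carry either $1$ (on unmarked arcs) or a parameter (on marked arcs). If $(i,j)$ is unmarked, then $i$ and $j$ lie in distinct components of the current unmarked subgraph, so no cycle-product identity from Theorem \ref{thm: D_n similarity} involving only previously normalized weight-$1$ arcs constrains $w_A(i,j)$; hence the residual $D_n$-freedom inside $B^{(k)}$ is enough to rescale $w_A(i,j)$ to $1$, placing us in Case (b). If $(i,j)$ is marked, then by the algorithm's rule, adding $(i,j)$ completes an undirected cycle whose other arcs are all unmarked and already normalized to $1$; the cycle-product identity \eqref{D_n similarity relation} then rigidifies $w_A(i,j)$ as a $D_n$-invariant, so no residual $B_n$-action can further normalize it, and it is recorded by \Balg's Case (c) as a parameter.

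The step I expect to be delicate is verifying the unmarked case of the induction: one must justify that the residual group $B^{(k)}$ really can rescale $w_A(i,j)$ to $1$ without perturbing earlier normalized entries. This mixes the $U_n$-similarity reductions of Theorem \ref{thm: U_n similar to Bform} with a $D_n$-normalization, so the compatibility is not automatic. I would handle this by combining Theorem \ref{thm: ESO on coset} to absorb the ESO effects with the cycle-product criterion of Theorem \ref{thm: D_n similarity}: absent a freshly completed cycle through previously normalized arcs, no new scalar invariant is introduced at step $k$, and hence $w_A(i,j)$ is genuinely free. Once this bookkeeping aligns the greedy-spanning-tree viewpoint with \Balg's sequential normalization, the identification of parameters with marked arcs falls out immediately.
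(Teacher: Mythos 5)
Your proposal is correct and follows essentially the same route as the paper: both arguments rest on the cycle-product invariant of Theorem \ref{thm: D_n similarity} together with the observation that a marked arc is the last arc in \Border\ of an undirected cycle whose other arcs are unmarked, while the unmarked arcs form a cycle-free spanning subgraph so that their weights normalize to $1$ in accordance with \Balg. Your preliminary spanning-tree count (via Corollary \ref{thm: indecomposable Bform parameters}) and the recasting as an induction along \Border\ are a reorganization of the paper's global argument rather than a genuinely different method.
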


\begin{proof} 
Since $A\in QU_n$, the parameters   of $A$ appear only in the entries corresponding to extra arcs.  

In step (2), 
when adding an extra arc $(i,j)$  results in an undirected cycle  in which none of the arcs is marked, 
we may assume that the undirected cycle  has distinct vertices  by removing redundant subcycles. 
By Theorem \ref{thm: D_n similarity} (2), the undirected cycle contains at least one arc with a parameter weight to represent the scalar in \eqref{D_n similarity relation}. 
Moreover, $(i,j)$ is the last arc in \Border\ in this undirected cycle. So by \Balg, the parameter weight in the undirected cycle should be on $(i,j)$.

 After step (3), 
if we remove all marked arcs from $G_A$ then the remaining subgraph does not have any undirected cycle. By Theorem \ref{thm: D_n similarity},
$A$ is $D_n$-similar (and thus $B_n$-similar) to a matrix whose unmarked arcs have weights 1 and marked arcs have parameter weights. 

The normalization steps (1), (2), (3) 
allow us to place the parameters of $A$ in accordance with \Balg. So these steps determine the places of parameters.
\end{proof}

\begin{ex} An analysis similar to Example \ref{ex: find Bform} 
shows that:  
every matrix $A\in QU_8$ of 
the graph type $123678|45: 46|24|14$ 
has no extra arc in $G_A$ that can be eliminated by ESOs stablizing $QU_8$.  
So $A$ is a \bform, which is indecomposable since  $G_A$ is connected. Corollary \ref{thm: indecomposable Bform parameters} shows that $A$ has $2$ parameters, and 
Theorem \ref{thm: Bform places of parameters} shows that the parameters appear in the $(2,4)$ and $(1,4)$ entries. So  $\wt{G}_A$ and $A$ have the forms ($\lambda,\mu\in\F\setminus\{0\}$): 
\\
\twopage{.55}{.45}
{
\begin{center}
\begin{tikzpicture}
\node at (-0.7,0) {$\wt{G}_A:$};
\node (1) at (0,0) {$1$};
\node (2) at (0.8,0) {$2$};
\node (3) at (1.6,0) {$3$};
\node (4) at (2.4,-0.5) {$4$};
\node (5) at (3.2,-0.5) {$5$};
\node (6) at (4,0) {$6$};
\node (7) at (4.8,0) {$7$};
\node (8) at (5.6,0) {$8$};

	\foreach \from/\to in {1/2,2/3,3/6,6/7,7/8,4/5}
            \draw[edge] (\from) -- (\to);
	\foreach \from/\to in {4/6,2/4,1/4}
            \path[Redge] (\from) -- (\to);
	\node at (3,-0.2) {\red{\tiny $1$}}; 
	\node at (1.8,-0.2) {\red{\tiny $\lambda$}}; 
	\node at (1.1,-0.35) {\red{\tiny $\mu$}}; 
\end{tikzpicture}
\end{center}
}
{
$$A=\smtx{0&1&0&\mu&0&0&0&0\\&0&1&\lambda&0&0&0&0\\&&0&0&0&1&0&0\\&&&0&1&1&0&0\\&&&&0&0&0&0\\&&&&&0&1&0\\&&&&&&0&1\\&&&&&&&0} 
$$}
All such \bform s may be represented by the graph type with additional underlines indicating parameters, namely $123678|45: 46|\ul{24}|\ul{14}$. 
\end{ex}

\subsection{Extra arcs in the graph of \Bform}

Fix a subpermutation 
$Q=\sum_{i\in I} E_{i,\sigma(i)}\in N_n. $
Using the notations in Section \ref{sect: graph representation}, 
the graph $G_Q$ consists of $n-|I|$ chains in which the set $S_{h}$ of chain heads  and the set $S_{t}$ of  chain tails are 
\begin{equation}
S_{h}=[n]\setminus I,\qquad S_{t}=[n]\setminus \sigma(I). 
\end{equation}
We also denote the maps 
\begin{equation}
I\to\sigma(I),\quad i\mapsto i^+:=\sigma(i),\qquad \text{and \ } 
\sigma(I)\to I,\quad j\mapsto j^-:=\sigma^{-1}(j). 
\end{equation}

Given a \bform\ $A\in QU_n$, Lemma \ref{thm: subperm coset} and its graph version Lemma \ref{thm: QU_n graph} give  a description of the entries of $A$. 
In this subsection, we further explore what entries of $A$ should be zero, namely, 
what extra arcs should not be in $G_A$. 

\begin{thm}\label{thm: Bform no arcs}
Let $A=[a_{ij}]\in QU_n$ be a \bform\  in which $Q=\sum_{i\in I} E_{i,\sigma(i)}\in N_n$ is  a subpermutation.
Then for $i\in I$, $(i,j)\not\in E_A$ (i.e. $a_{ij}=0$) when one of the following situations happen:

\begin{enumerate}
\twopage{.5}{.5}{\item $i^+<j\in S_{h}$;\\
\begin{tikzpicture}
\node (1) at (0,1) {$i$};
\node (2) at (1.5,1) {$i^+$};
\node (4) at (2.5,0) {$j$};
	\foreach \from/\to in {1/2}
            \path[edge] (\from) -- (\to);
	\foreach \from/\to in {1/4}
            \path[Redge,dashed] (\from) -- (\to);
\node at (2,0.5)  {\tiny $<$};
\node at (2, 1)  {$\cdots$};
\node at (-0.4, 1)  {$\cdots$};
\node at (1.6,0)  {$\cdots$};
\node at (3,0)  {\tiny $\in S_{h}$};
\end{tikzpicture}
}
{\item  $j\not\in S_{t}$  and $i<j^-$.
\\ 
\begin{tikzpicture}
\node (1) at (0,1) {$i$};
\node (2) at (1.5,1) {$i^+$};
\node (3) at (1,0) {$j^-$};
\node (4) at (2.5,0) {$j$};
	\foreach \from/\to in {1/2,3/4}
            \path[edge] (\from) -- (\to);
	\foreach \from/\to in {1/4}
            \path[Redge, dashed] (\from) -- (\to);
\node at (0.5,0.5)  {\tiny $<$};
\node at (2, 1)  {$\cdots$};
\node at (-0.4, 1)  {$\cdots$};
\node at (0.5,0)  {$\cdots$};
\node at (3,0)  {$\cdots$};
\end{tikzpicture}
}
\end{enumerate}
In particular, if $i$ and $j$ are on the same chain of $G_Q$ but $j\ne i^+$, then $(i,j)\not\in E_A$. 
\end{thm}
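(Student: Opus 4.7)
The plan is to prove the two cases separately by induction on the position $(i,j)$ in \Border, and then to deduce the ``in particular'' clause from case (2). In each case, since $A$ is a \bform, it suffices to exhibit a $B \in B_n$ whose similarity action fixes every entry at positions $\prec (i,j)$ while zeroing $a_{ij}$; \Balg\ then normalizes that entry to $0$, forcing $a_{ij}=0$ in $A$.

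For case (2), Lemma \ref{thm: QU_n graph} automatically gives $i^+ < j$, which together with $i, j^- \in I$ and $i < j^-$ places $(i, j^-) \in S_Q$. I would take $B = I - a_{ij} E_{i, j^-}$, i.e., apply $\O_{i, j^-}^{-a_{ij}}$. By the formula \eqref{elementary similarity}, the only entries modified are $(i,j)$ itself (becoming zero), the entries $(i,k)$ with $k > j$ in row $i$, and the entries $(l, j^-)$ with $l < i$ in column $j^-$. All these side-effect positions lie $\succ (i,j)$ in \Border, so the transformation preserves the Belitski\u{\i} stabilizer $B^{(k)}$.

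For case (1), the natural ESO $\O_{i^+, j}^{a_{ij}}$ stabilizes $QU_n$ because $j \in S_h$ puts $(i^+, j) \in S_Q$; moreover, since $j$ is a chain head, Lemma \ref{thm: QU_n graph} forbids extra arcs starting at $j$, so the $j$-th row of $A$ vanishes and no side effect occurs in row $i^+$. The remaining side effects sit at $(l, j)$ for $i < l < i^+$ with $a_{l, i^+} \ne 0$; such $l$ satisfy $l \in I$ and $l^+ < i^+ < j$, and the inductive hypothesis (case (1) at $(l,j)$) gives $a_{lj} = 0$ before the ESO acts. The main obstacle is that the ESO would shift each such $a_{lj}$ to $-a_{ij} a_{l, i^+}$, corrupting a position $\prec (i,j)$. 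I would fix this by replacing the single ESO with the compound transformation $B = I + \sum_p c_p E_{p, j} \in U_{S_Q}$, and solving the linear system
\[
\sum_p c_p \, a_{m, p} \;=\; \delta_{m, i}\, a_{ij}, \qquad m \in \{i, i+1, \ldots, j-1\}.
\]
Starting with $c_{i^+} = a_{ij}$ and introducing further pivots $c_{l^+}$ for the problematic $l$'s, a Gaussian-style elimination solves the system: case (2) applied to $(i, m^+)$ (which lies $\prec (i,j)$) yields $a_{i, m^+} = 0$, so the row-$i$ pivot of $1$ at column $i^+$ is preserved throughout the elimination, while the strict decrease $l^+ < i^+$ at each recursive stage guarantees termination. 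The resulting $B$ lies in $B^{(k)} \cap U_{S_Q}$ and zeros $a_{ij}$.

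The ``in particular'' clause reduces to case (2): if $i$ and $j$ lie on a common chain of $G_Q$ with $j \ne i^+$, then $j$ has a chain predecessor $j^-$ strictly between $i$ and $j$, so $j \in \sigma(I)$ (hence $j \notin S_t$) and $i < j^-$. The recursive compound-ESO construction in case (1) is clearly the delicate part; everything else is bookkeeping on \Border\ and direct use of \eqref{elementary similarity}.
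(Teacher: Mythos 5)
Your proposal is correct and follows essentially the same route as the paper: case (2) is exactly the paper's single $\O_{i,j^-}$-operation with the same bookkeeping of side effects in \Border, and your compound matrix $I+\sum_p c_p E_{p,j}$ in case (1) is precisely the product of the paper's iterated column-$j$ operations $\O_{i^+,j},\O_{i_1^+,j},\ldots$ (these commute because row $j$ vanishes for $j\in S_{h}$), with your triangular-system elimination playing the role of the paper's termination argument $i^+>i_1^+>\cdots$. The induction on \Border\ and the appeal to case (2) to get $a_{i,m^+}=0$ are harmless but not needed, since every later pivot column $m^+$ lies strictly to the left of the leading entry of row $i$, which is zero there automatically.
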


\begin{proof}
We prove by contradictions that $A$ cannot be a \bform\ if an arc $(i,j)\in E_A$ satisfies (1) or (2) of Theorem \ref{thm: Bform no arcs}.
The idea is to find a matrix $A'\in Q U_n$ such that $A'\SIM{U_n} A$  and 
\begin{equation}\label{set removing an arc} 
E_{A'}\subseteq (E_{A}\setminus\{(i,j)\})\cup \{(i',j')\in [n]\times [n]: i'<j',\ (i,j)\prec (i',j')\},
\end{equation}
which contradicts \Balg\ to get \Bform\ $A$.
 
\begin{enumerate}
\item Suppose $(i,j)\in E_A$ such that $i^+<j\in S_{h}$. 
By Lemma \ref{thm: ESO necessary condition}, there is $\lambda_1\in\F$ such that the graph of $A_1:=\O_{i^+,j}^{\lambda_1}(A)$ 
contains no arc $(i,j)$. 
By Lemma \ref{thm: QU_n graph} (2), $j\in S_{h}$ is not the tail of any arc of $\wt{G}_A$. 
Thus by \eqref{ESGO}, $E_{A_1}\setminus E_A$ only contains some $(i_1,j)$ in which $(i_1,i^+)\in E_A$ and $i_1\ne i$ so that $i_1\in I$ and $i_1^+<i^+$ by Lemma \ref{thm: QU_n graph} (2). 
\\
\twopage{.5}{.4}{
\quad The changes from $G_A$ to $G_{A_1}$   are illustrated on the right, in which the dashed blue arc is removed and some solid blue arcs are added.
}
{
\begin{center}
\begin{tikzpicture}
\node (1) at (0,1) {$i$};
\node (2) at (1.8,1) {$i^+$};
\node (4) at (2.5,0.3) {$j$};
\node (5) at (0.3,1.7) {$i_1$};
\node (6) at (1.3,1.7) {$i_1^+$};
	\foreach \from/\to in {1/2,5/6}
            \path[edge] (\from) -- (\to);
	\foreach \from/\to in {1/4}
            \draw [Bedge, dashed] (\from) to [bend right = 10] (\to);
	\foreach \from/\to in {5/2}
            \path[Redge] (\from) -- (\to);
	\foreach \from/\to in {5/4}
            \draw [Bedge] (\from) to [bend right = 20] (\to);
\node at (3,0.3)  {\tiny $\in S_{h}$};
\end{tikzpicture}
\end{center}
} 

Similarly, for each $(i_1,j)\in E_{A_1}\setminus E_A$, an appropriate $\O_{i_1^+,j}$-operation will remove the arc $(i_1,j)$ from the graph and add the arcs $(i_2,j)$ in which $(i_2,i_1^+)\in E_{A_1}$ and $i_2\ne i_1$
so that $i_2\in I$ and $i_2^+<i_1^+$. Repeating the process results in $i^+>i_1^+>i_2^+>\cdots$. However, the process cannot go on forever. 
Hence by a finite steps of ESOs we can remove $(i,j)$ from $G_A$ as well as all arcs created by these ESOs. In other words,
we get $A'=A-a_{ij}E_{ij}$ such that $A'\SIM{U_n} A$. This contradicts  the assumption that $A$ is a \bform. 
Therefore, $(i,j)\not\in E_A$.

\item Suppose $(i,j)\in E_A$ such that $j\not\in S_{t}$  and $i<j^-$. 
By Lemma \ref{thm: ESO necessary condition}, there is $\lambda\in\F$ such that the graph of $A':=\O_{i,j^-}^{\lambda}(A)$ 
contains no arc $(i,j)$. By \eqref{ESGO},  $E_{A'}\setminus E_{A}$ only contains the following possible arcs:
\begin{center}
\begin{tikzpicture}
\node (1) at (0,1) {$i$};
\node (2) at (1.5,1) {$i^+$};
\node (3) at (1,0) {$j^-$};
\node (4) at (2.5,0) {$j$};
\node (5) at (-1,0.5) {$h$};
\node (6) at (3,0.5) {$k$};
	\foreach \from/\to in {1/2,3/4}
            \path[edge] (\from) -- (\to);
	\foreach \from/\to in {1/4}
            \path[Bedge, dashed] (\from) -- (\to);
	\foreach \from/\to in {5/1, 3/6}
            \path[Redge] (\from) -- (\to);
	\foreach \from/\to in {5/3, 1/6}
            \path[Bedge] (\from) -- (\to);
\end{tikzpicture}
\end{center}
\begin{enumerate}
\item $(h,j^-)\in E_{A'}\setminus E_A$ in which $(h,i)\in E_A$. In such a case, $h<i$ so that $(h,j^-)\succ (i,j)$ in \Border. 

\item $(i,k)\in E_{A'}\setminus E_A$ in which $(j^-,k)\in E_A$ and $k\ne j$. In such a case, $j^-<j<k$ by Lemma \ref{thm: QU_n graph} (2) so that $(i,k)\succ (i,j)$ in \Border.
\end{enumerate}

Overall, we get  $A'\SIM{U_n} A$ in which $E_{A'}$ satisfies \eqref{set removing an arc}. It contradicts  the assumption that $A$ is a \bform. 
Therefore, $(i,j)\not\in E_A$. \qedhere
\end{enumerate}
\end{proof}

The   less intuitive matrix version of Theorem \ref{thm: Bform no arcs} is as follows.

\begin{thm}
Let  $A=[a_{ij}]\in QU_n$ be a \bform\ where $Q=\sum_{i\in I} E_{i,\sigma(i)}\in N_n$ is a subpermutation.
Then $a_{ij}=0$ whenever: 
\begin{enumerate}
\item $j\in [n]\setminus (I\cup\{\sigma(i)\})$, or 
\item $j\in\sigma(I)$ and $i<\sigma^{-1}(j)$. 
\end{enumerate} 
In particular, $a_{ij}=0$ if $j=\sigma^{k}(i)$ for some integer $k>1$.
\end{thm}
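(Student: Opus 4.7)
The plan is to deduce this statement as a direct matrix translation of Theorem \ref{thm: Bform no arcs}, using the dictionary between the subpermutation data $(I,\sigma)$ and the chain structure of $G_Q$. Specifically, $v\in I$ iff $v$ has a chain successor (iff $v\notin S_h$), and $v\in\sigma(I)$ iff $v$ has a chain predecessor (iff $v\notin S_t$), with $i^+=\sigma(i)$ and $j^-=\sigma^{-1}(j)$. Once this dictionary is in place, both conditions (1) and (2) of the matrix statement should turn directly into the corresponding graph-theoretic conditions already handled in Theorem \ref{thm: Bform no arcs}.

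First I would dispose of the trivial case $i\notin I$: by Lemma \ref{thm: subperm coset}(1), the $i$th row of $A$ is identically zero, so $a_{ij}=0$ for every $j$ and both claims hold vacuously. From here on I would assume $i\in I$.

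For condition (1), I would rewrite $j\in[n]\setminus(I\cup\{\sigma(i)\})$ as the two assertions ``$j$ is a chain head'' and ``$j\ne i^+$''. The only subtlety is that this hypothesis does not compare $j$ with $i^+$, so I would split on $j<i^+$ versus $j>i^+$. In the first case, Lemma \ref{thm: QU_n graph}(2) prohibits an extra arc $(i,j)$ because an extra arc from $i$ requires $i^+<j$, and $j\ne i^+$ rules out $(i,j)\in E_Q$; hence $a_{ij}=0$. In the second case I would simply invoke Theorem \ref{thm: Bform no arcs}(1).

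For condition (2), the hypothesis $i<j^-$ forces $j\ne i^+$, so $(i,j)\notin E_Q$, and $j\in\sigma(I)$ is exactly $j\notin S_t$; this places us in the hypothesis of Theorem \ref{thm: Bform no arcs}(2), giving $a_{ij}=0$. For the ``in particular'' consequence $j=\sigma^k(i)$ with $k\ge 2$, the strict monotonicity $i<\sigma(i)<\sigma^2(i)<\cdots$ along a chain yields $j^-=\sigma^{k-1}(i)>i$ and $j\in\sigma(I)$, so condition (2) applies. The main obstacle will not be conceptual at all: the ESO-based hard work already lives in Theorem \ref{thm: Bform no arcs}, and the only real task here is keeping the chain-theoretic and algebraic notations synchronized while dispatching the corner cases above.
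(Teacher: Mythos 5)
Your proposal is correct and matches the paper's intent exactly: the paper offers no separate argument for this statement, presenting it as the matrix rephrasing of Theorem \ref{thm: Bform no arcs}, which is precisely the dictionary-based translation you carry out. Your extra care with the corner cases ($i\notin I$ via Lemma \ref{thm: subperm coset}(1), and $j<i^+$ via the leading-entry condition in Lemma \ref{thm: QU_n graph}(2)) is exactly what is needed to make the translation complete, so there is nothing to add.
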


Theorem \ref{thm: Bform no arcs} is equivalent to the following result  which gives a characterization of the possible arcs in a \bform.

\begin{thm}\label{thm: Bform arcs}
Let $A=[a_{ij}]\in QU_n$ be a \bform\  in which $Q=\sum_{i\in I} E_{i,\sigma(i)}\in N_n$ is  a subpermutation. 
Then $(i,j)\in E_A$ (i.e. $a_{ij}\ne 0$) implies  $i\in I=[n]\setminus S_{h}$  and one of the following:
\begin{enumerate}
\item 
\twopage{.45}{.5}
{$j=i^+$ (where $a_{ij}=1$).}
{\begin{tikzpicture}
\node (1) at (1,0) {$i$};
\node (2) at (2.5,0) {$i^+=j$};
		\foreach \from/\to in {1/2}
	\path[edge] (\from) -- (\to);
\end{tikzpicture}
}
\item 
\twopage{.45}{.5}
{$j\in S_{t}\setminus S_{h}$ and $i^+<j$.}
{\begin{tikzpicture}
\node (1) at (0,0.7) {$i$};
\node (2) at (1.5,0.7) {$i^+$};
\node (3) at (2.5,0) {$j$};
\node (4) at (4,0) {$j^+$};
	\foreach \from/\to in {1/2,3/4}
            \path[edge] (\from) -- (\to);
	\foreach \from/\to in {1/3}
            \path[Redge] (\from) -- (\to);
\node at (2.5,-0.3)  {\tiny $\in S_{t}\setminus S_{h}$};
\node at (2,0.35)  {\tiny $<$};
\end{tikzpicture}
}
\item
\twopage{.45}{.5}
{$j\not\in S_{t}\cup S_{h}$ and $j^-<i<i^+<j$.}
{\begin{tikzpicture}
\node (1) at (0,0.7) {$i$};
\node (2) at (1.5,0.7) {$i^+$};
\node (3) at (-1,0) {$j^-$};
\node (4) at (2.5,0) {$j$};
	\foreach \from/\to in {1/2,3/4}
            \path[edge] (\from) -- (\to);
	\foreach \from/\to in {1/4}
            \path[Redge] (\from) -- (\to);
\node at (-0.5,0.35)  {\tiny $<$};
\node at (2,0.35)  {\tiny $<$};
\node at (3.2,0) {\tiny $\not\in S_{t}\cup S_{h}$};
\end{tikzpicture}
}
\end{enumerate}
In particular, given $i\in I$, there is at most one vertex $j$ in each chain of $G_Q$ such that
$(i,j)\in E_A$. 
\end{thm}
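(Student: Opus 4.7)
The plan is to derive Theorem~\ref{thm: Bform arcs} as a direct repackaging of Lemma~\ref{thm: QU_n graph} and Theorem~\ref{thm: Bform no arcs}, by classifying the location of $j$ with respect to the partition data $I,\sigma(I),S_h,S_t$ of $Q$. First I would fix an arc $(i,j)\in E_A$ and split into two possibilities via Lemma~\ref{thm: QU_n graph}: either $(i,j)\in E_Q$, in which case $i\in I$, $j=i^+$, and $a_{ij}=1$, which is exactly case (1); or $(i,j)$ is an extra arc, in which case Lemma~\ref{thm: QU_n graph}(2) already gives $i\in I$ and $i^+<j$. All remaining analysis concerns the extra-arc situation.

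For an extra arc I would apply Theorem~\ref{thm: Bform no arcs} in two steps. Part (1) of that theorem rules out $j\in S_h$ since $i^+<j$, so $j\notin S_h$. I then split on whether $j\in S_t$. If yes, then combined with $j\notin S_h$ and $i^+<j$ I land in case (2). If no, then $j\in\sigma(I)$, so $j^-$ is defined, and part (2) of Theorem~\ref{thm: Bform no arcs} forbids $i<j^-$, giving $j^-\le i$. Because $(i,j)$ is an extra arc we have $j\ne i^+$, hence $j^-\ne i$, so $j^-<i$; together with $i^+<j$ and $j\notin S_t\cup S_h$ this is case (3). That exhausts all possibilities.

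For the \emph{at most one $j$ per chain} statement I would handle $i$'s own chain and any other chain $C$ separately. On $i$'s own chain, the final clause of Theorem~\ref{thm: Bform no arcs} restricts $j$ to $i^+$, a single vertex. On a different chain $C$, case (1) is irrelevant; case (2) identifies $j$ as the unique chain tail of $C$, while case (3) forces $j$ to be interior in $C$ with $j^-<i<i^+<j$. Two interior candidates $j<j'$ in $C$ both satisfying case (3) would yield, by the ascending order along $C$, $(j')^-\ge j>i^+>i$, contradicting $(j')^-<i$. A case-(2) vertex $j_2$ (the minimum of $C$) together with a case-(3) vertex $j_3\in C$ would give $j_3>j_2$ and hence $j_3^-\ge j_2>i^+$, contradicting $j_3^-<i$. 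The only genuine wrinkle I expect is this last chain-ordering argument; everything else is an immediate bookkeeping from the prior results.
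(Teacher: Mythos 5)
Your proof is correct and takes essentially the same route as the paper: case (1) is just an arc of $E_Q$, cases (2)--(3) are precisely the extra arcs permitted by Lemma \ref{thm: QU_n graph} after excluding those forbidden by Theorem \ref{thm: Bform no arcs}(1)--(2), and the at-most-one-per-chain claim rests on the same ascending-chain observation (the paper states it directly as ``$j$ is the lowest vertex of the chain exceeding $i^+$,'' while you derive it by pairwise contradiction). No gaps.
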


\begin{proof}
The case (1) is $(i,j)\in E_Q$. 
The cases (2) and (3) cover those extra arcs $(i,j)$  not included
in Theorem \ref{thm: Bform no arcs} (1) and (2). 

It remains to prove the last claim. 
Suppose $(i,j)\in E_A$ and the vertex $j$ is in a chain $G'$ of $G_Q$. 
If the chain $G'$ contains the vertex $i$, then $j=i^+$.
Otherwise, $j$ is the lowest vertex number in the chain $G'$ such that $j>i^+$.
\end{proof}

\begin{thm}\label{thm: long chain deletion}
Let $A=[a_{ij}]\in QU_n$ be a \bform\ in which $Q=\sum_{i\in I} E_{i,\sigma(i)}\in N_n$ is  a subpermutation. 
Given $(i,j)\in [n]\times [n]$ with $i<j$,  
suppose there exist $m\in\N$ and sequences $i_0=i, i_1,\ldots,i_m\in [n]$ and $j_0=j, j_1,\ldots,j_m\in [n]$ such that all of the following conditions hold for $p\in[m]$: 
\begin{center}
\begin{tikzpicture}
\node (1) at (0,1) {$i_0$};  
	\node at (-0.5, 1) {$i=$};  
\node (2) at (1.5,1) {$i_1$};
\node (3) at (3,1) {$i_2$};
\node (4) at (5,1) {$i_{m-1}$}; 
\node (5) at (6.5,1) {$i_m$};
\node (6) at (2,0.3) {$j_0$};
	\node at (1.5,0.3) {$j=$};  
\node (7) at (3.5,0.3) {$j_1$};
\node (8) at (5,0.3) {$j_2$};
\node (9) at (7,0.3) {$j_{m-1}$};
\node (10) at (8.5,0.3) {$j_m$};
	\foreach \from/\to in {1/2,2/3,4/5, 6/7,7/8,9/10}
            \path[edge] (\from) -- (\to);
	\foreach \from/\to in {1/6, 2/7, 3/8, 4/9,5/10}
            \path[edge, dashed] (\from) -- (\to);
\node at (4,1)  {\bf $\cdots\cdots$};
\node at (6,0.3)  {$\cdots\cdots$};
\end{tikzpicture}
\end{center}
\begin{enumerate}
\item 
$(i_{p-1},i_{p})$ is the only arc in $G_A$ whose head is $i_{p}$.  
\item 
$(j_{p-1},j_{p})=(j_{p-1},j_{p-1}^+)$ is the only arc in $G_A$ whose tail is $j_{p-1}$. 
\item $i_p<j_{p-1}$. 
\item $i_m\not\in S_{h}$ but $j_m\in S_{h}$. 
\end{enumerate}
Then $(i,j)\not\in E_A$. 
\end{thm}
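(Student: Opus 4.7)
I argue by contradiction: suppose $(i,j)=(i_0,j_0)\in E_A$. The strategy is to exhibit a $U_n$-similarity whose net effect on $A$ is to subtract $a_{i_0j_0}E_{i_0j_0}$, contradicting $A$ being a \bform.

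First I observe that $(i_m,j_m)\notin E_A$. Since $j_m\in S_h$, Theorem~\ref{thm: Bform arcs} permits $(i_m,j_m)\in E_A$ only when $j_m=i_m^+$; but then $j_m=i_m^+=j_{m-1}^+$, and injectivity of $\sigma$ forces $i_m=j_{m-1}$, contradicting $i_m<j_{m-1}$ from condition~(3). Hence $a_{i_m j_m}=0$ in $A$.

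Next I form the forward chain $\Phi:=\O_{i_m,j_{m-1}}^{\lambda_m}\circ\cdots\circ\O_{i_1,j_0}^{\lambda_1}$, taking $\lambda_1:=a_{i_0j_0}/a_{i_0i_1}$ and recursively $\lambda_p:=\lambda_{p-1}a_{j_{p-2}j_{p-1}}/a_{i_{p-1}i_p}$ for $p\ge 2$. Conditions~(1) and (2) combined with formula~\eqref{elementary similarity} show that $\O_{i_p,j_{p-1}}^{\lambda_p}$ alters only the two entries $(i_{p-1},j_{p-1})$ and $(i_p,j_p)$: the unique arc $(i_{p-1},i_p)$ into $i_p$ kills side effects in column $j_{p-1}$, and the unique arc $(j_{p-1},j_p)$ out of $j_{p-1}$ kills side effects in row $i_p$. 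The telescoping choice of $\lambda_p$ leaves the entries $(i_p,j_p)$ for $1\le p\le m-1$ unchanged, sends $(i_0,j_0)$ to $0$, and sends $(i_m,j_m)$ to $\lambda_m a_{j_{m-1}j_m}\ne 0$.

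Finally I neutralize the residual arc at $(i_m,j_m)$ by the argument of Theorem~\ref{thm: Bform no arcs}(1). Since $j_m\in S_h$ is the tail of no arc in $\Phi(A)$, an ESO $\O_{i_m^+,j_m}^{\mu}$ removes $(i_m,j_m)$ while possibly creating arcs $(h,j_m)$ with $h\in I$ and $h^+<i_m^+$; cascading $\O_{h^+,j_m}$-operations with strictly decreasing successor indices terminate, and when composed they effect only the single change of removing $(i_m,j_m)$. Concatenating with $\Phi$ yields a $U_n$-similarity $A\mapsto A-a_{i_0j_0}E_{i_0j_0}$ inside $QU_n$, contradicting the Belitski\u{\i} normalization. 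The main obstacle is the careful bookkeeping: verifying the telescoping cancellation in the forward chain and the analogous cancellation in the backward cascade, and checking that each intermediate ESO stabilizes $QU_n$ in the sense of Theorem~\ref{thm: ESO on coset}.
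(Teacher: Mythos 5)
Your argument is essentially the paper's: push the offending arc along the two chains with the operations $\O_{i_1,j_0},\O_{i_2,j_1},\ldots,\O_{i_m,j_{m-1}}$ (conditions (1)--(3) guaranteeing that each ESO touches only the two entries $(i_{p-1},j_{p-1})$ and $(i_p,j_p)$), then kill the terminal arc $(i_m,j_m)$ by the cascade from the proof of Theorem~\ref{thm: Bform no arcs}(1), and contradict the Belitski\u{\i} normalization. Your two refinements are pleasant but cosmetic: the observation that $a_{i_mj_m}=0$ in $A$, and the telescoping choice of the $\lambda_p$ that restores the intermediate entries $(i_p,j_p)$ to their original values (the paper instead zeroes them), so that the net effect is exactly $A\mapsto A-a_{i_0j_0}E_{i_0j_0}$, which makes the final contradiction with \Balg\ cleaner.

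There is, however, one step you should flag, and it is inherited from the paper rather than introduced by you: the terminal ESO $\O_{i_m^+,j_m}^{\mu}$ is only defined when $i_m^+<j_m$, and nothing in hypotheses (1)--(4) guarantees this; your first observation only rules out $i_m^+=j_m$. When $i_m^+>j_m$ the arc $(i_m,j_m)$ created by the forward chain need not be removable at all, and then the conclusion itself can fail. Concretely, take ${\mc P}_Q=125|34$ in $N_5$ and $(i,j)=(1,3)$ with $m=1$, $i_1=2$, $j_1=4$: conditions (1)--(4) hold ($2\notin S_h$, $4\in S_h$, $2<3$), yet $2^+=5>4$, the forward chain produces the arc $(2,4)$ which can only be altered by $\O_{2,3}$ (recreating $(1,3)$), and indeed $125|34{:}13$ appears as a \bform\ in the paper's own list in Theorem~\ref{thm: Bform n le 6}. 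So to make your proof (and the theorem) airtight you must add, or verify in each application, the condition $i_m^+<j_m$; it does hold in all the uses the paper makes of this result, e.g.\ in the ${\mc P}_Q=12368|457$ example. Apart from this shared implicit assumption, your bookkeeping claims are sound: the overlaps $i_p=j_l$ that could spoil the ``only two entries change'' statement are excluded by conditions (1)--(3) once $(i_0,j_0)\in E_A$ is assumed, and stabilization of $QU_n$ by the intermediate ESOs is not actually needed for the contradiction, since your net transformation fixes every entry of $A$ except the $(i,j)$ entry.
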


\begin{proof} 
Suppose on the contrary $(i,j)=(i_0,j_0)\in E_A$. There exists $\lambda_1\in\F$ such that the graph of $A_1:=\O_{i_1,j_0}^{\lambda_1}(A)$ does not contain the arc $(i,j)$.
Then either $E_{A_1}=E_{A}\setminus \{(i,j)\}$ or $E_{A_1}=(E_{A}\setminus \{(i,j)\})\cup\{(i_1,j_1)\}$. 
However, $E_{A_1}=E_{A}\setminus \{(i,j)\}$  is impossible since $A$ is a \bform. 
So  $E_{A_1}=(E_{A}\setminus \{(i,j)\})\cup\{(i_1,j_1)\}$.  
Similarly, applying a sequence of appropriate $ \O_{i_2,j_1}, \ldots,\O_{i_m,j_{m-1}}$ operations to $A_1$,
we will get $A_m\SIM{U_n}A_1\SIM{U_n}A$ such that 
$$E_{A_m}=(E_{A}\setminus \{(i,j),(i_1,j_1),\ldots,(i_{m-1},j_{m-1})\})\cup\{(i_m,j_m)\}.$$ 
Since $i_m\not\in S_{h}$ and $j_m\in S_{h}$, 
the proof of Theorem \ref{thm: Bform no arcs} indicates that there is $A_{m+1}\SIM{U_n} A_m\SIM{U_n} A$ such that
$$E_{A_{m+1}}=E_{A_m}\setminus \{(i_m,j_m)\}=E_{A}\setminus \{(i,j),(i_1,j_1),\ldots,(i_{m},j_{m})\}.$$
It  contradicts the assumption that $A$ is a \bform.  
\end{proof}

\begin{ex} Let $Q\in N_8$ be the subpermutation with ${\mc P}_Q=12368|457$.
Consider the possible \bform s  $A\in QU_8$.  
 Theorem \ref{thm: Bform arcs} implies that the possible extra arcs in $E_A\setminus E_Q$ are $(1,4)$, $(2,4)$, and $(4,6)$. 
By Theorem \ref{thm: long chain deletion}, neither $(4,6)$ nor $(1,4)$ can be in a \bform. Therefore, the only indecomposable
\bform\ in $QU_8$ is of the graph form $12368|457:24$. \\
\twopage{.4}{.6}
{$$A=\smtx{0&1&0&0&0&0&0&0\\&0&1&1&0&0&0&0\\&&0&0&0&1&0&0\\&&&0&1&0&0&0\\&&&&0&0&1&0\\&&&&&0&0&1\\&&&&&&0&0\\&&&&&&&0
}$$
}
{
\begin{tikzpicture}
\node (1) at (0,0.8) {$1$};
\node (2) at (0.8,0.8) {$2$};
\node (3) at (1.6,0.8) {$3$};
\node (4) at (2.4,0) {$4$};
\node (5) at (3.2,0) {$5$};
\node (6) at (4,0.8) {$6$};
\node (7) at (4.8,0) {$7$};
\node (8) at (5.6,0.8) {$8$};
	\foreach \from/\to in {1/2,2/3,3/6,6/8,4/5,5/7}
            \draw[edge] (\from) -- (\to);
	\foreach \from/\to in {2/4}
            \path[Redge] (\from) -- (\to);
\end{tikzpicture}
}
\end{ex}

\subsection{Possible numbers of parameters in a \bform}

In \cite[Theorem 2.4]{Chen2016}, Chen et al showed that for $n\ge 6$, there exists an indecomposable \bform\ in $N_n$ which admits
at least $\lfloor\frac{n}{2}\rfloor-2$ parameters. Note that a matrix in $N_n$ has up to $\frac{n(n-1)}{2}$   nonzero entries. 
We show below the existence of indecomposable $3$-nilpotent \bform s with arbitrary number of parameters up to $\O(n^2)$. 

\begin{theorem}\label{thm: Bform parameters}
Let $n, r\in\N$ such that $n\ge 6$ and
\beq\label{Bform parameters}
r\le \begin{cases}
\frac{1}{2} \lfloor\frac{n-2}{3}\rfloor(\lfloor\frac{n-2}{3}\rfloor-1) &\text{if \ } n\equiv 0, 2 \mod 3,
\\
\frac{1}{2} \lfloor\frac{n-2}{3}\rfloor(\lfloor\frac{n-2}{3}\rfloor-1)-1 &\text{if \ } n\equiv 1 \mod 3.
\end{cases}
\eeq
Then there exists an  indecomposable  \bform \ $A\in N_n$ with $r$ parameters, and $A$ has the minimal polynomial $x^3$.  
\end{theorem}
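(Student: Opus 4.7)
The plan is to prove the theorem by exhibiting, for each admissible $r$, an explicit indecomposable $3$-nilpotent \bform\ $A\in N_n$ with exactly $r$ parameters. Write $n = 3k + c$ with $k = \lfloor (n-2)/3\rfloor$ and $c\in\{2,3,4\}$.

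First, I would fix a subpermutation $Q\in N_n$ whose graph $G_Q$ is a disjoint union of short chains, each of length at most $3$ (so that $Q^3 = 0$), organized into a ``lower layer'' of length-$2$ chains whose tails lie in an initial segment of $[n]$ and an ``upper layer'' of length-$2$ chains whose tails sit after all heads of the lower layer, together with one or two auxiliary chains absorbing the residue $c$.  The sizes of the two layers are chosen so that the count of admissible extra arcs compatible with the constraint $A^3 = 0$ matches the bound in \eqref{Bform parameters}; the $-1$ correction when $n\equiv 1 \pmod 3$ reflects the loss of one candidate arc in the case $c=4$, because the forced residual chain creates an unavoidable length-$3$ path whenever that arc is present.

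Second, using Theorem \ref{thm: Bform arcs} I would enumerate the set $T$ of arcs $(i,j)$ admissible as extras in $QU_n$: in the template these are exactly the pairs with $i$ a lower tail, $j$ an upper tail, and $i^+ < j$, together with a controlled handful coming from case~(3) of Theorem \ref{thm: Bform arcs}.  For a given $r$, choose a subset $S\subseteq T$ of size $r + (m-1)$, where $m$ is the number of chains in $G_Q$, so that $E_Q \cup S$ yields a connected graph (using $m-1$ arcs of $S$ as a spanning structure) whose cyclomatic number equals $r$ (the remaining $r$ arcs each closing an independent undirected cycle); the bipartite-like shape of $T$ makes both selections readily available.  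Setting $A := Q + \sum_{(i,j)\in S} E_{ij}$, Theorem \ref{thm: Bform number of parameters} and Corollary \ref{thm: indecomposable Bform parameters} confirm that $A$ is indecomposable with exactly $r$ parameters, while $A^3 = 0$ is immediate from the length-$2$ restriction on directed paths in $G_A$, and $A^2 \neq 0$ follows from any extra arc combining with the subsequent $Q$-arc to form a length-$2$ path.

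The main obstacle is certifying that the resulting $A$ is genuinely a \bform, not merely an element of $QU_n$ whose graph looks canonical.  By Theorem \ref{thm: Bform arcs} every arc in $E_A$ is already of a permitted type; what remains is to rule out Theorem \ref{thm: long chain deletion}, namely, to show that for each chosen $(i,j)\in S$ there is no sequence $i_0 = i, i_1,\ldots,i_m$ and $j_0 = j, j_1,\ldots,j_m$ satisfying its four conditions in $G_A$.  The template is designed so that every lower tail $i$ appearing as the tail of an arc in $S$ is in fact the tail of several arcs in $S$, which forces the uniqueness clause of condition~(1) to fail at $p=1$; a symmetric argument on the upper-tail side handles the $j$-direction, so Theorem \ref{thm: long chain deletion} is vacuously inapplicable to any arc in $S$.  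This case-by-case inspection is the technical heart of the argument.  Once it is in place, letting $|S|$ range over the admissible values produces the desired family of \bform s and completes the proof.
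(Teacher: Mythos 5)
There is a genuine gap at what you yourself call the technical heart of the argument: certifying that your candidate $A$ is a \bform. You propose to do this by checking that every arc of $E_A$ is of a type permitted by Theorem \ref{thm: Bform arcs} and that the hypotheses of Theorem \ref{thm: long chain deletion} fail for each chosen extra arc. But those two theorems are only \emph{necessary} conditions on the arcs of a \bform; they are a filter, not a certificate. The paper's own algorithm (Section \ref{sect: search Bforms}, step (3a)) and Example \ref{ex: Bform classification} show exactly this: arcs such as $(1,4)$ and $(1,5)$ there survive both filters and are nevertheless removed by a \emph{composition} of ESOs stabilizing $QU_n$. So after your case-by-case inspection you would still not know that no composition of stabilizing ESOs eliminates an arc of $S$ while fixing all earlier entries in \Border, which is what Belitski\u{\i}'s algorithm actually requires (via Theorems \ref{thm: ESO on coset} and \ref{thm: U_n similar to Bform}). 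This is precisely where the paper does something different: it chooses the subpermutation with the middle vertices interleaved in \emph{reverse} order (chain $i\to 2m{+}1{-}i\to 2m{+}i$), computes the stabilizer index set $S_Q$ of \eqref{index set of U_n stabilizing QU_n}, and observes that every stabilizing ESO only alters entries in columns $2m{+}1,\ldots,3m$, hence cannot touch any extra-arc entry at all. No appeal to Theorem \ref{thm: long chain deletion} is needed, and none would suffice.

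The gap is not merely formal, because your layered template appears to lack the reverse interleaving that makes the paper's stabilizer argument work. If the lower-layer chains are ``parallel'' (two tails $a<a'$ with $a^+<a'^+$), then $(a,a')$ lies in $S_Q$ by \eqref{index set of U_n stabilizing QU_n}, so $\O_{a,a'}^{\lambda}$ stabilizes $QU_n$ and adds $\lambda$ times row $a'$ to row $a$, changing the weights of the arcs $(a,b)$ whenever $(a',b)\in S$; the spurious entry it creates at $(a,a'^+)$ can be cancelled by a further stabilizing ESO $\O_{a^+,a'^+}$. So in your template extra-arc entries \emph{can} be modified by stabilizing compositions, and whether a given arc survives to the canonical form becomes a delicate question that your proposal never engages. (A smaller slip: in dismissing Theorem \ref{thm: long chain deletion} you argue that a tail carrying several arcs of $S$ violates its condition (1), but condition (1) is a uniqueness requirement on arcs \emph{into} $i_p$ and condition (2) on arcs \emph{out of} $j_{p-1}$, so multiplicity of outgoing arcs at $i$ is not what that theorem tests.) To repair the proof you would either have to adopt the paper's reverse-interleaved chains and its stabilizer computation, or supply a genuinely new argument that no admissible composition of stabilizing ESOs, respecting \Border, can annihilate any chosen extra arc in your template; the exact matching of the arc count with \eqref{Bform parameters}, which you leave as an unspecified choice of layer sizes, would also need to be carried out explicitly.
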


\begin{proof}
We construct the desired \bform s for $n\ge 6$ according to $n\mod 3$:
\begin{enumerate}
\item
When $n=3m$,  choose the subpermutation $Q\in N_{3m}$ with
\beq
{\mc P}_Q=1\  2m\  2m+1\ |\ 2\ 2m-1\ 2m+2\ |\ \cdots\ |\ m\ m+1\ 3m.
\eeq
Let  $G=([3m],E)$ be the graph containing  $G_Q$ as a subgraph and
\beq
E\setminus E_Q=\{(i,j)\in [3m]\times[3m]: 2\le i\le m,\ 2m+2-i\le j\le 2m\}. 
\eeq
The graph $G$ is illustrated as below ($1\le i_1<i_2<i_3<\ldots\le n$):
\begin{center}
\begin{tikzpicture}
\node (1) at (0,2.1) {$i_1$};
\node (2) at (0.5,1.4) {$i_2$};
\node (3) at (1,0.7) {$i_3$};
\node (4) at (1.5,0) {$i_4$};
\node (5) at (3,0) {$i_5$};
\node (6) at (3.5,0.7) {$i_6$};
\node (7) at (4,1.4) {$i_7$};
\node (8) at (4.5,2.1) {$i_{8}$};
\node (9) at (6,2.1) {$i_{9}$};
\node (10) at (6.5,1.4) {$i_{10}$};
\node (11) at (7,0.7) {$i_{11}$};
\node (12) at (7.5,0) {$i_{12}$};
	\foreach \from/\to in {1/8,8/9,2/7,7/10,3/6,6/11,4/5,5/12}
            \path[edge] (\from) -- (\to);
	\foreach \from/\to in {2/8,3/8,3/7,4/8,4/7,4/6}
            \path[Redge] (\from) -- (\to);
\end{tikzpicture}
\end{center}
Let $A\in QU_n$ such that $G_A=G$ and the places of parameters in $A$ are given by Theorem \ref{thm: Bform places of parameters}. 
We claim that  
$A$ is a \bform. 

By \eqref{index set of U_n stabilizing QU_n}, the ESOs stabilizing $QU_n$ are those
$\O_{p,q}^{\lambda}$ in which   either 
\begin{enumerate}
\item $(p,q)\in  [m]\times ([2m]\setminus[m])$, or
\item
 $p<q$ and $q\in [3m]\setminus [2m]$.
\end{enumerate}
In both cases, these ESOs only changes the weights of  $(i,j)$ such that $i<j$ and $j\in [3m]\setminus [2m]$. None of
the weights of extra arcs in $E\setminus E_Q$ can be modified by the ESOs stabilizing $QU_n$. So $A$ is a \bform. 

The number of extra arcs of $A$ is $1+2+\cdots+(m-1)=\frac{1}{2}(m-1)m$. By Theorem \ref{thm: Bform number of parameters},
The number of parameters in $A$ is
$$
 \frac{1}{2}(m-1)m+2m-3m+1= \frac{1}{2}(m-1) (m-2).
$$

Given $r\in\{0,1,\ldots, \frac{1}{2}(m-1) (m-2)-1\}$, we can remove $\frac{1}{2}(m-1) (m-2)-r$  extra arcs from the graph $G=G_A$ and keep the remaining graph connected. 
The resulting graph is the graph of an indecomposable \bform\ with $r$ parameters.  So \eqref{Bform parameters} is true for $n\equiv 0\mod 3$. 

\item
When $n=3m-1$, let $G$ be the subgraph of the graph in $n=3m$ case,  obtained by removing the vetex $3m$ and the arc $(m+1,3m)$.  
See the illustrated graph below. 
Similar argument shows that there is a \bform\ $A$ with $G_A=G$, and   \eqref{Bform parameters} is true for $n\equiv 2\mod 3$.
\begin{center}
\begin{tikzpicture}
\node (1) at (0,2.1) {$i_1$};
\node (2) at (0.5,1.4) {$i_2$};
\node (3) at (1,0.7) {$i_3$};
\node (4) at (1.5,0) {$i_4$};
\node (5) at (3,0) {$i_5$};
\node (6) at (3.5,0.7) {$i_6$};
\node (7) at (4,1.4) {$i_7$};
\node (8) at (4.5,2.1) {$i_{8}$};
\node (9) at (6,2.1) {$i_{9}$};
\node (10) at (6.5,1.4) {$i_{10}$};
\node (11) at (7,0.7) {$i_{11}$};
	\foreach \from/\to in {1/8,8/9,2/7,7/10,3/6,6/11,4/5}
            \path[edge] (\from) -- (\to);
	\foreach \from/\to in {2/8,3/8,3/7,4/8,4/7,4/6}
            \path[Redge] (\from) -- (\to);
\end{tikzpicture}
\end{center}
  
\item
When $n=3m-2$, let $G$ be the subgraph of the graph in $n=3m-1$ case,  obtained by removing the vetex $3m-1$ and the arcs $(m+2,3m-1)$ and $(m,m+2)$.  
See the illustrated graph below. 
Similarly, there is a \bform\ $A$ with $G_A=G$, and   \eqref{Bform parameters} is true for $n\equiv 1\mod 3$.
\begin{center}
\begin{tikzpicture}
\node (1) at (0,2.1) {$i_1$};
\node (2) at (0.5,1.4) {$i_2$};
\node (3) at (1,0.7) {$i_3$};
\node (4) at (1.5,0) {$i_4$};
\node (5) at (3,0) {$i_5$};
\node (6) at (3.5,0.7) {$i_6$};
\node (7) at (4,1.4) {$i_7$};
\node (8) at (4.5,2.1) {$i_{8}$};
\node (9) at (6,2.1) {$i_{9}$};
\node (10) at (6.5,1.4) {$i_{10}$};
	\foreach \from/\to in {1/8,8/9,2/7,7/10,3/6,4/5}
            \path[edge] (\from) -- (\to);
	\foreach \from/\to in {2/8,3/8,3/7,4/8,4/7}
            \path[Redge] (\from) -- (\to);
\end{tikzpicture}
\end{center}
\end{enumerate}

The graphs of the above \bform s show that the minimal polynomials of these \bform s are $x^3$. 
\end{proof}

\section{Searches of \Bform s}\label{sect: search Bforms}  


\subsection{Algorithms to search for \Bform s}

We apply the results in Sections 2-4 to get the following efficient  algorithm to obtain the  \bform s under the $B_n$-similarity for a given $n$.

{\bf Algorithm:}
\begin{enumerate}
\item List all   subpermutations $Q$ in $N_n$ (by the set partitions ${\mc P}_Q$ of $[n]$).
\item For each subpermutation $Q$, apply Theorems \ref{thm: Bform arcs} and \ref{thm: long chain deletion}
to filter out a set $S$ of possible extra arcs  of  \Bform s in $QU_n$ and list them in \Border, say, $S:=\{(i_1,j_1)\prec\cdots\prec (i_m,j_m)\}$. 
\item Explore all possible combinations of the above extra arcs that produce \bform s. To do this, let $S_0:=\emptyset$ and start at $p=1$: 
  \begin{enumerate}
      \item 
      Determine whether there exists an $U_n$-similarity operation $g$ composed by ESOs  stabilizing $QU_n$ such that $g$ changes the graph $([n], E_Q\cup S_{p-1}\cup\{(i_p,j_p)\})$ to a graph $([n], E')$ where 
      $$ 
      E'\subseteq E_Q\cup S_{p-1}\cup \{(i,j)\in S: (i,j)\succ (i_p,j_p)\}.$$ 
      If such a $g$ exists, then the arc $(i_p,j_p)$ can be removed by the $g$ operation from the graph of any $A\in QU_n$ whose set of  extra arcs no greater than $(i_p,j_p)$ is $S_{p-1}\cup\{(i_p,j_p)\}$; we let $S_p:=S_{p-1}$.
      Otherwise,  divide the upcoming process into two cases $S_p:=S_{p-1}\cup\{(i_p,j_p)\}$ and $S_p:=S_{p-1}$. 
      \item Increase   $p$   by $1$. If $p\le m$, repeat the preceding process. 
      \item The outcoming sets $S_m$ are the sets of extra arcs of all \bform s in $QU_n$. Apply Theorem \ref{thm: Bform places of parameters} to determine the places of parameters   for each graph type of \Bform s.
  \end{enumerate}
\end{enumerate}

The above algorithm can be restricted to search for only the indecomposable \bform s. Moreover, the algorithm can  be slightly modified to obtain \Bform\ of a given matrix 
$A\in N_n$ after finding $A'\in QU_n$ such that $A\SIM{U_n} A'$ by steps (1) and (2) of the simplification process at the end of Section \ref{sect: preliminary}. 
It is much more efficient than \Balg.

\begin{ex}\label{ex: Bform classification}
Let us search \Bform s in $QU_8$ in which ${\mc P}_Q=123|478|56$. By Theorems \ref{thm: Bform arcs} and \ref{thm: long chain deletion},
the possible extra arcs of a \bform\ in $QU_8$ are listed in \Border\ as follows: \\
\twopage{.5}{.5}
{
$$
(5,7)\prec (2,4)\prec (2,5)\prec (1,4)\prec (1,5).
$$
Let $S$ be the set of these arcs. 
The graph $G_Q$ (in solid arcs) and the  set $S$ (in dashed arcs) are shown on the  right. 
}
{
\begin{tikzpicture}
\node (1) at (0,1) {$1$};
\node (2) at (0.8,1) {$2$};
\node (3) at (1.6,1) {$3$};
\node (4) at (2.4,0.3) {$4$};
\node (5) at (3.2,-0.6) {$5$};
\node (6) at (4,-0.6) {$6$};
\node (7) at (4.8,0.3) {$7$};
\node (8) at (5.6,0.3) {$8$};
	\foreach \from/\to in {1/2,2/3,4/7,7/8,5/6}
            \draw[edge] (\from) -- (\to);
	\foreach \from/\to in {5/7,2/4}
            \path[Redge, dashed] (\from) -- (\to);
	\foreach \from/\to in {1/4}
           \draw [Redge, dashed] (\from) to [bend right = 10] (\to);
	\foreach \from/\to in {2/5,1/5}
           \draw [Redge, dashed] (\from) to [bend right = 20] (\to);
\end{tikzpicture}
}

Let $S_0:=\emptyset$. 
The arc $(5,7)$ cannot be removed  by a composition of ESOs  stabilizing $QU_8$,
since the only type of ESOs that changes the weight of the arc $(5,7)$ is $\O_{6,7}$ which does not stabilize $QU_8$.
 There are two cases $S_1:=\{(5,7)\}$ and $S_1:=\emptyset$. Consider the case $S_1:=\{(5,7)\}$. Similarly, we can reach one of the outcoming cases $S_2:=\{(5,7),(2,4)\}$ and 
$S_3:=\{(5,7),(2,4),(2,5)\}$. The next   arc in consideration is $(1,4)$, which can be removed as illustrated below (cf. \eqref{to get Bform}): 
$$
\begin{matrix} 
&\O_{12} &\O_{23} &
\\
{\mc P}_Q:57|24|25|14|\cdots 
&-14+13+15 &-13 &=57|24|25|15.
\end{matrix}
$$
Note that  $(1,5)$ created  in the above process satisfies $(1,5)\succ (1,4)$. Now $S_4:= \{(5,7),(2,4),(2,5)\}$.
The next arc $(1,5)$ can be removed as below:
$$
\begin{matrix} 
&\O_{25} &\O_{36} &\O_{47} &\O_{78} &\O_{68} &
\\
{\mc P}_Q:57|24|25|15 
&-15+26+27 &-26 &-27+48 &-48+58 &-58 &=57|24|25.
\end{matrix}
$$
So $S_5:=\{(5,7),(2,4),(2,5)\}$. We get a \bform\ of the type ${\mc P_Q}:57|24|\ul{25}$ in which the underline indicates the place of a parameter. Explicitly,
the $8\times 8$ \bform\ is:
$$
E_{12}+E_{23}+E_{47}+E_{78}+E_{56}+E_{57}+E_{24}+\lambda E_{25}.
$$
\twopage{.38}{.6}
{
The table  on the right lists the forms of \bform s in $QU_8$ for ${\mc P}_Q=123|478|56$. We use ``Y'' (resp. ``N'') to 
mark the presence (resp. absence) of an extra arc, and
``--'' to indicate that the arc can be removed given the combination 
of preceding extra arcs. Totally there are 10 forms of \bform s in the double coset $B_8QB_8$, and 5 of them are indecomposable.
}
{\begin{center}
The \bform s  for ${\mc P}_Q=123|478|56$
\end{center}
\begin{flushright}
\begin{tabular}{|ccccc|l|c|}
\hline 
$57$ &$24$ &$25$ &$14$ &$15$ &type & indecomp.
\\ \hline
Y &Y &Y &-- &-- &$57|24|\ul{25}$ &Yes
\\ \hline
Y&Y &N &--&-- &$57|24$	&Yes 
\\ \hline
Y&N&Y&--&-- &$57|25$	&Yes 
\\ \hline
Y&N&N&--&Y &$57|15$	&Yes 
\\ \hline
Y&N&N&--&N &$57$	&No
\\ \hline
N&Y&Y&--&-- &$24|25$	&Yes
\\ \hline
N&Y&N&--&-- &$24$	&No
\\ \hline
N&N&Y&--&-- &$25$	&No
\\ \hline
N&N&N&Y&-- &$14$	&No
\\ \hline
N&N&N&N&-- &$\emptyset$	&No
\\ \hline
\end{tabular}
\end{flushright}
}
\end{ex}

\subsection{The indecomposable \bform s for $n\le 8$}

In this subsection, we describe the indecomposable \bform s in $N_n$ under the $B_n$-similarity  for $n\le 8$ using their graph types together with underlines indicating nonzero parameters (see Example \ref{ex: Bform classification}). The classifications for $n\le 6$ have been done by Kobal \cite{Kobal2005} and Chen et al \cite{Chen2016} (see Theorem \ref{thm: Bform n le 6}).  We apply MAPLE programs to filter out possible extra arcs using the algorithm in the preceding subsection and obtain all classifications for $n\le 8$. 

\begin{thm}[Kobal \cite{Kobal2005}, Chen et al \cite{Chen2016}]\label{thm: Bform n le 6} 
The indecomposable \bform s in $N_n$ under the $B_n$-similarity for $n\le 6$ are listed by their  graph types together with underlines indicating nonzero parameters
as follows (29 forms, separated by commas):
\begin{multicols}{3}
\noindent
$1: \ \emptyset,$ \\
$12: \ \emptyset,$ \\
$123: \ \emptyset,$ \\
$ 1 2 3 4: \ \emptyset,$ \\
$ 1 2|3 4: \ 13,$ \\
$ 1 2 3 4 5: \ \emptyset,$ \\
$ 1 2 3|4 5: \ 24,$ \\
$ 1 2 5|3 4: \ 13,$ \\
$ 1 4 5| 2 3: \ 24,$ \\
$ 1 2|3 4 5: \ 13,$ \\
$ 1 2 3 4 5 6: \ \emptyset,$ \\
$ 1 2 3 4|5 6: \ 35,$ \\
$ 1 2 3 6|4 5: \ 24,$ \\
$ 1 2 5 6| 3 4: \ 35 | \ul{13}, 35,  13,$ \\
$ 1 4 5 6| 2 3 : \ 24,$ \\
$ 1 3 4| 2 5 6: \ 35,$ \\
$ 1 4 5| 2 3 6: \ 24,$ \\
$ 1 5 6| 2 3 4: \ 35,$ \\
$1 2 3|4 5 6: \ 24, \ 14,$ \\
$1 2 4| 3 5 6: \ 13,$ \\
$1 2 5| 3 4 6: \ 13,$ \\
$1 2 6| 3 4 5: \ 13,$ \\
$1 2 |3 4 5 6: \ 13,$ \\
$1 2| 3 4| 5 6 : \ 35 | 13,$ \\
$1 2| 3 6| 4 5 : \ 13 | 14,$ \\
$ 1 4| 2 3| 5 6 : \ 25 | 15.$
\end{multicols}
\end{thm}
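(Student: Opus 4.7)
The plan is to apply the algorithm of Section \ref{sect: search Bforms} case by case, organized by the value of $n$ and then by the set partition ${\mc P}_Q$ of $[n]$. For each $n\le 6$, I would first enumerate the subpermutations $Q\in N_n$ via their partitions ${\mc P}_Q$ (Lemma \ref{thm: graph partition subpermutation}); up to $n=6$ this is short enough to list by hand. For each such $Q$, since we want only indecomposable forms, I restrict to partitions whose eventual graph $G_A$ has a chance of being connected on all of $[n]$.

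For a fixed ${\mc P}_Q$, the next step is to read off the allowed set $S$ of extra arcs using Theorem \ref{thm: Bform arcs}: each extra arc must go from some $i\in I$ to some $j$ that either lies in $S_t\setminus S_h$ with $i^+<j$, or lies outside $S_t\cup S_h$ with $j^-<i<i^+<j$. Then I apply Theorem \ref{thm: long chain deletion} to prune arcs that cannot survive a \bform. This cuts $S$ down to a short list sorted in \Border. For $n\le 6$ the resulting list has at most a handful of arcs per partition.

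The core step is to sweep through the arcs of $S$ in \Border\ as in step (3) of the algorithm, maintaining a running subset $S_p$ of ``arcs kept so far.'' At each arc $(i_p,j_p)$, I check whether there exists a composition of ESOs stabilizing $QU_n$ (which are exactly those $\O_{p,q}^\lambda$ whose index pairs lie in the set $S_Q$ of \eqref{index set of U_n stabilizing QU_n}) that rewrites the current graph so as to remove $(i_p,j_p)$ while introducing only arcs strictly later than $(i_p,j_p)$ in \Border. This is a finite, purely combinatorial check of the form illustrated in \eqref{to get Bform} and in Example \ref{ex: Bform classification}. When no such composition exists I branch into the two cases ``keep'' and ``drop.'' The surviving leaves of this branching tree are exactly the graph types of \bform s in $QU_n$; the connected ones give the indecomposable \bform s, and the places of parameters are read off by Theorem \ref{thm: Bform places of parameters} (each extra arc that closes a previously parameter-free undirected cycle becomes a parameter).

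Finally, I compare the resulting list to the 29 entries in the statement. The main obstacle is not a deep mathematical point but careful bookkeeping: one must verify, for each partition and each branch, that the elimination sequences really do stabilize $QU_n$ and do not secretly create arcs outside the allowed set $S$. This is where ${\mc P}_Q=1256|34$ (graph type $35|\ul{13}$) is subtle, because two distinct non-removable extra-arc configurations coexist and one of them yields a parameter; going through exactly the kind of table produced in Example \ref{ex: Bform classification} for each of the partitions of $[n]$, $n\le 6$, and then discarding decomposable outputs, reproduces the listed 29 forms.
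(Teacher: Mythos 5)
Your proposal is correct and follows essentially the same route as the paper: the paper obtains this list precisely by applying the Section~\ref{sect: search Bforms} algorithm (filtering candidate extra arcs via Theorems~\ref{thm: Bform arcs} and~\ref{thm: long chain deletion}, then placing parameters by Theorem~\ref{thm: Bform places of parameters}) to every set partition of $[n]$ for $n\le 6$, delegating the finite bookkeeping to MAPLE and to the prior classifications of Kobal and Chen et al.\ to which the theorem is attributed. Since your write-up likewise defers the actual case sweep (note there are $203$ partitions of $[6]$ alone), it has the same computational-verification character as the paper's treatment, and no further idea is missing.
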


\begin{rem}
For $n=6$, Theorem \ref{thm: Bform n le 6} lists 19 forms instead of 18 forms shown in \cite[Theorem 2.2]{Chen2016}, since we use nonzero parameters in our classifications. 
\end{rem}

The results for $n=7$ are as follows, including the $8$ forms with a parameter  discovered in \cite[Theorem 2.3]{Chen2016}.

\begin{thm}\label{thm: Bform n=7} 
The indecomposable \bform s in $N_n$ under the $B_n$-similarity for $n=7$ are of the graph types (85 forms in 58 subpermutations, separated by commas): 
\begin{multicols}{3}
\noindent
$1 2 3 4 5 6 7:$   $\emptyset ,$ \\
$1 4 5 6 7| 2 3 :$   $ 24 ,$ \\
$1 2 5 6 7| 3 4:$   $ 35  |  \underline{13},$  $35,$  $13 ,$ \\
$1 2 3 6 7|4 5:$   $ 46  |  \underline{24},$  $46,$  $ 24 ,$ \\
$1 2 3 4 7|5 6:$   $ 35  ,$ \\
$1 2 3 4 5|6 7:$   $ 46  ,$ \\
$1 5 6 7| 2 3 4  :$   $ 35,$  $ 25 ,$ \\
$1 4 6 7| 2 3 5 :$   $ 24 ,$ \\
$1 4 5 7| 2 3 6 :$   $ 24 ,$ \\
$1 4 5 6| 2 3 7 :$   $ 24 ,$ \\
$1 3 6 7|2 4 5 :$   $ 46 ,$ \\
$1 3 4 7|2 5 6 :$   $ 35 ,$ \\
$1 3 4 5|2 6 7 :$   $ 46 ,$ \\
$1 2 6 7|3 4 5 :$   $ 46  |  \underline{13},$  $    46,$  $13 ,$ \\
$1 2 5 7| 3 4 6:$   $ 13 ,$ \\
$1 2 5 6| 3 4 7:$   $ 35  |  \underline{13},$  $    35,$  $ 13 ,$ \\
$1 2 4 7| 3 5 6:$   $ 13 ,$ \\
$1 2 4 5| 3 6 7:$   $ 46 ,$ \\
$1 2 3 7| 4 5 6:$   $ 24,$  $    14 ,$ \\
$1 2 3 6|4 5 7:$   $ 24 ,$ \\
$1 2 3 5|4 6 7:$   $ 24 ,$ \\
$1 2 3 4|5 6 7:$   $ 35,$  $    25 ,$ \\
$1 2 3|4 5 6 7:$   $ 24,$  $    14  ,$ \\
$1 2 4| 3 5 6 7:$   $ 13 ,$ \\
$1 2 5| 3 4 6 7:$   $ 13 ,$ \\
$1 2 6| 3 4 5 7 :$   $ 13 ,$ \\
$1 2 7| 3 4 5 6 :$   $ 13 ,$ \\
$1 3 4| 2 5 6 7 :$   $ 35 ,$ \\
$1 4 5| 2 3 6 7 :$   $ 46  |  \underline{24},$  $    46,$  $    24 ,$ \\
$1 5 6| 2 3 4 7 :$   $ 35 ,$ \\
$1 6 7| 2 3 4 5 :$   $ 46 ,$ \\
$1 2 3|4 5 | 6 7:$   $ 46  |  24,$  $    46  |  14 ,$ \\
$1 2 3|4 7 | 5 6:$   $ 24 | 25 ,$ \\
$1 2 4| 3 5| 6 7:$   $ 36  |  13 ,$ \\
$1 2 5| 3 4| 6 7:$   $ 36  |  26  |  \underline{13},$    $ 36  |  26,$   $ 36  |  13,$   $ 26  |  13 ,$ \\
$1 2 6 | 3 4 | 5 7 :$   $ 35  |  13 ,$ \\
$1 2 7| 3 4| 5 6 :$   $ 35  |  13 ,$ \\
$1 2 7| 3 6| 4 5 :$   $ 13  |  14 ,$ \\
$1 3 4| 2 5| 6 7 :$   $ 36  |  26 ,$ \\
$1 4 5| 2 3| 6 7 :$   $ 46  |  24,$  $    24  |  16 ,$ \\
$1 4 6| 2 3| 5 7 :$   $ 24  |  15 ,$ \\
$1 4 7| 2 3| 5 6 :$   $ 24  |  25  |  \underline{15},$  $    24  |  25,$  $    24  |  15,$  $    25  |  15 ,$ \\
$1 5 6| 2 3| 4 7 :$   $ 24  |  25 ,$ \\
$1 6 7| 2 3| 4 5 :$   $ 46  |  24 ,$ \\  
$1 6 7| 2 5| 3 4 :$   $ 36  |  26 ,$ \\
$1 2 |3 4 5 6 7:$   $ 13  ,$ \\
$1 5 | 2 3 4|6 7 :$   $ 36  |  16 ,$ \\
$1 4 | 2 3 7|5 6 :$   $ 25  |  15 ,$ \\
$1 3 | 2 4 7|5 6 :$   $ 25  |  15 ,$ \\
$1 3 | 2 6 7|4 5 :$   $ 46  |  14 ,$ \\
$1 2 | 3 4 5|6 7 :$   $ 46  |  13 ,$ \\
$1 2 | 3 4 7|5 6 :$   $ 35  |  13,$  $    35  |  15 ,$ \\
$1 2 | 3 6 7|4 5 :$   $ 46  |  13  | \underline{14},$  $  46  |  13,$  $  46  |  14,$  $  13  |  14 ,$ \\
$1 2 | 3 7|4 5 6 :$   $ 13  |  14 ,$ \\
$1 2 | 3 6|4 5 7 :$   $ 13  |  14 ,$ \\
$1 2 | 3 5|4 6 7 :$   $ 13  |  14 ,$ \\
$1 2 | 3 4|5 6 7 :$   $ 35  |  13,$  $    13  |  15 ,$ \\
$1 4 | 2 3|5 6 7 :$   $ 25  |  15.$
\end{multicols}
\end{thm}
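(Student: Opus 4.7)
The plan is to apply the graphical algorithm from Section \ref{sect: search Bforms} systematically to every subpermutation in $N_7$, using the structural theorems of Section \ref{sect: Bform properties} to drastically cut down the search, and then certify each surviving graph type as a genuine indecomposable \bform\ by exhibiting (or excluding) reducing ESO sequences that stabilize $QU_7$.

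First I would enumerate all subpermutations $Q\in N_7$ via their partitions ${\mc P}_Q$ of $[7]$ (by Lemma \ref{thm: graph partition subpermutation}), restricting attention to those for which an indecomposable \bform\ in $QU_7$ is even possible, namely those admitting extra arcs that can connect all chains of $G_Q$ into a single connected graph. For each such $Q$, I would apply Theorem \ref{thm: Bform arcs} to produce the initial list $S$ of \emph{candidate} extra arcs (those whose tail $i$ lies in $I$ and whose head $j$ either equals $i^+$, lies in $S_t\setminus S_h$ with $i^+<j$, or lies outside $S_t\cup S_h$ with $j^-<i<i^+<j$), and then prune $S$ further with Theorem \ref{thm: long chain deletion} to eliminate arcs forced to vanish by long-chain reductions. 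Order the resulting set in \Border.

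Next, for each ordered pruned set I would walk through the branching process of step (3) of the algorithm: at each candidate $(i_p,j_p)$, either exhibit a composition of $QU_7$-stabilizing ESOs that removes $(i_p,j_p)$ using only arcs of $S_{p-1}$ and higher-order arcs of $S$ (so the arc is dropped), or prove no such composition exists (in which case the branch splits into ``present'' and ``absent''). The computation of which ESOs stabilize $QU_7$ is dictated by \eqref{index set of U_n stabilizing QU_n}, and the test can be performed graphically exactly as in Example \ref{ex: Bform classification} and \eqref{to get Bform}. Collect every surviving graph type, keep the connected ones (the indecomposable \bform s by Corollary \ref{thm: indecomposable Bform parameters}), and apply Theorem \ref{thm: Bform places of parameters} to label parameter positions with underlines.

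Finally, to guarantee completeness I would cross-check the outcome against the known cases: the indecomposable \bform s for $n\le 6$ together with Theorem \ref{thm: combine Bforms} recover many subpermutations of the form ${\mc P}_Q$ containing a singleton chain $\{7\}$ attached to an $n=6$ pattern, and the eight one-parameter $n=7$ forms from \cite[Theorem 2.3]{Chen2016} must appear in the output. The main obstacle is the sheer combinatorial bulk: for subpermutations such as those with ${\mc P}_Q$ of type $124|37|56$ or $1\,2\,3|4\,7|5\,6$, the candidate set $S$ already has five or more arcs and the branching tree for step (3) is nontrivial, requiring careful bookkeeping of which ESOs remain available at each stage. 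This is precisely why the authors delegate the enumeration to MAPLE; the role of Theorems \ref{thm: Bform arcs} and \ref{thm: long chain deletion} is to keep the search tree small enough that the verification is feasible subpermutation by subpermutation, and the proof reduces to listing the 58 cases and, for each, recording the reducing ESO sequences that justify the ``--'' entries and the failed reductions that justify the surviving extra arcs.
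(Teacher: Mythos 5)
Your proposal matches the paper's own treatment: the authors prove Theorem \ref{thm: Bform n=7} by exactly this route, enumerating the subpermutations $Q\in N_7$, pruning candidate extra arcs with Theorems \ref{thm: Bform arcs} and \ref{thm: long chain deletion}, running the branching ESO-reduction of the algorithm in Section \ref{sect: search Bforms} (with parameter placement via Theorem \ref{thm: Bform places of parameters}), and delegating the case-by-case bookkeeping to MAPLE. Since you propose the same algorithm, the same pruning theorems, and the same verification strategy, there is nothing substantive to distinguish your argument from theirs.
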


\begin{thm}\label{thm: Bform n=8} 
For $n=8$, there are 481 forms (in 245 subpermutations) of  indecomposable \bform s in $N_n$ under the $B_n$-similarity. The graph types of these forms will be listed  in the Appendix section. 
\end{thm}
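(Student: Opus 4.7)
The plan is to carry out the algorithm from Section \ref{sect: search Bforms} exhaustively for $n=8$, restricted to indecomposable \bform s, and tally the resulting forms. First, I would enumerate all subpermutations $Q \in N_8$; by Lemma \ref{thm: graph partition subpermutation} these are in bijection with the set partitions of $[8]$, of which there are $B_8 = 4140$. For the indecomposable classification I only care about those $Q$ that can be completed to an indecomposable \bform\ in $QU_8$, so I would immediately discard any $Q$ whose chain structure (together with the candidate extra arcs allowed by Theorem \ref{thm: Bform arcs}) cannot produce a connected graph on $[8]$. This pruning is what cuts the $4140$ subpermutations down to the claimed $245$ that actually contribute.

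For each surviving subpermutation $Q$, I would then apply Theorem \ref{thm: Bform arcs} to generate the finite set $S_Q$ of \emph{candidate} extra arcs (those pairs $(i,j)$ with $i \in I$ and $j$ satisfying condition (1), (2) or (3) of that theorem), and further filter $S_Q$ by Theorem \ref{thm: long chain deletion} to eliminate any arc that is forced to be zero in every \bform. Ordering $S_Q$ by \Border\ \eqref{Border}, I run the branching loop described in Section \ref{sect: search Bforms}: at arc $(i_p, j_p)$ either there is a composition of ESOs stabilizing $QU_8$ (i.e., $U_{S_Q}$-conjugations of the type in \eqref{index set of U_n stabilizing QU_n}) that eliminates $(i_p, j_p)$ using only arcs succeeding it in \Border, in which case this arc is forced out, or no such reduction exists and the branching splits into ``include'' and ``exclude.'' Each leaf of this tree that produces a connected graph on $[8]$ corresponds to a graph type of an indecomposable \bform; the parameter positions on that leaf are then read off by Theorem \ref{thm: Bform places of parameters}, and Theorem \ref{thm: Bform number of parameters} gives a sanity check on the parameter count.

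The branching-step reducibility check is the technical bottleneck. At each node I need to decide whether a target arc $(i_p, j_p)$ lies in the image of an ESO cascade stabilizing $QU_8$ and using only arcs in $E_Q \cup S_{p-1} \cup \{(i,j) \succ (i_p,j_p)\}$. I would implement this as the symbolic \Balg\ step of \eqref{to get Bform}: track which arcs an $\O_{p,q}$-operation inserts or removes via \eqref{ESGO}, and search (greedily, then by backtracking if necessary) for a sequence that zeroes $(i_p, j_p)$ without touching previously fixed weights. For ambiguous cases where the generic reduction fails only because of accidental weight cancellations, I would double-check by directly working with symbolic weights on the undirected cycles governed by \eqref{D_n similarity relation}. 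I would encode the entire procedure as a MAPLE script, just as the authors describe, and verify consistency with Theorem \ref{thm: Bform n=7} on the restriction to subpermutations ${\mc P}_Q$ containing the singleton $\{8\}$ (whose indecomposable \bform s embed those for $n=7$ padded with an isolated vertex $8$, after adjustment).

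The hard part, beyond trusting a machine enumeration, is guaranteeing that no indecomposable form is double-counted and none is missed. For double-counting, distinct graph-type-with-underlines outputs correspond to distinct $B_8$-orbits because parameter positions are uniquely determined by Theorem \ref{thm: Bform places of parameters} once the nonzero-entry pattern is fixed, and distinct patterns give distinct invariants $\{r^{i,j}(A)\}$ via Lemma \ref{thm: double coset}. For completeness, the key is that the filtering by Theorems \ref{thm: Bform arcs} and \ref{thm: long chain deletion} is necessary but not sufficient, so every candidate pattern surviving the filter must still be individually certified as a \bform\ by exhibiting that no further ESO-reduction stabilizing $QU_8$ succeeds; this certification is exactly what \Balg\ provides and is what the algorithm executes at each branching node. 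After running the procedure, the aggregate count across all $245$ contributing subpermutations yields $481$ indecomposable graph-type forms, which are then catalogued in the Appendix.
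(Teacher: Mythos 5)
Your proposal is essentially the paper's own proof: Theorem \ref{thm: Bform n=8} is established exactly by running the algorithm of Section \ref{sect: search Bforms} (candidate extra arcs filtered by Theorems \ref{thm: Bform arcs} and \ref{thm: long chain deletion}, branching on whether each arc can be eliminated by ESOs stabilizing $QU_8$, parameters placed by Theorem \ref{thm: Bform places of parameters}) as a MAPLE computation whose output is the Appendix list, so your plan reproduces the paper's argument. One small correction to your no-double-counting remark: two distinct graph types over the \emph{same} subpermutation $Q$ lie in the same $(B_8,B_8)$ double coset and therefore have identical invariants $r^{i,j}$, so their lying in distinct $B_8$-orbits follows from the uniqueness of the \bform\ per orbit (\Balg), not from Lemma \ref{thm: double coset}, which only separates forms belonging to different subpermutations.
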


The number of subpermutations in $N_n$ equals the number of partitions of $[n]$, which  is called a Bell  or exponential number. 
The first few Bell numbers starting at $n=1$ are: 
$$1, 2, 5, 15, 52, 203, 877, 4140, 21147, \ldots$$ 
Many properties of the Bell numbers have been studied (cf. \href{http://oeis.org}{http://oeis.org}). 
Both Theorem \ref{thm: combine Bforms} and the Bell numbers imply that
the numbers of indecomposable \bform s in $N_n$  grow in a rate greater than any exponential function of $n$.

\subsection{Create new \bform s}

Theorem \ref{thm: combine Bforms} can be used to obtain new \bform s. Let $A_1\in Q_1U_p$ and $A_2\in Q_2U_q$ be \bform s.
Theorem \ref{thm: combine Bforms} claims that if $\mtx{Q_1 &Q_{12}\\0 &Q_2}\in N_{p+q}$ is a subpermutation, then
$\mtx{A_1 &Q_{12}\\0 &A_2}$ is a \bform\ in $N_{p+q}$. 
Here we rephrase Theorem \ref{thm: combine Bforms} in the language of graphs.
Let $G_{A_2}+p$ denote the graph with the vertex set $\{1+p,\ldots,q+p\}$ and the edge set $\{(i+p,j+p): (i,j)\in E_{A_2}\}$.
If we add some arcs from chain heads of $G_{A_1}$ to chain tails of $G_{A_2}+p$  such that each involving vertex is on at most one such arc, then the resulting graph represents a \bform\ in $N_{p+q}$. 
 The following are two examples. 

\begin{ex}\label{ex: combine bforms}
Let $A_1=A_2$  be \Bform s of the graph type $12|34:13$. So ${\mc P}_{Q_1}={\mc P}_{Q_2}=12|34$. 
By Theorem \ref{thm: combine Bforms}, we can obtain \bform s by adding to the graph $G_{A_1}\cup (G_{A_2}+4)$ 
some arcs from the chain heads $2$ and  $4$ of $G_{A_1}$ to
the chain tails $5$ and $7$ of $G_{A_2}+4$ such that each vertex is on at most one arc. The illustrated graph is as below, in which dashed arcs are possible arcs added to the graph $G_{A_1}\cup (G_{A_2}+4)$:
\begin{center}
\begin{tikzpicture}
\node (1) at (0,0.8) {$1$};  
\node (2) at (0.8,0.8) {$2$};
\node (3) at (1.6,0) {$3$};
\node (4) at (2.4,0) {$4$}; 
\node (5) at (3.2,0.8) {$5$};
\node (6) at (4.0,0.8) {$6$};
\node (7) at (4.8,0) {$7$};
\node (8) at (5.6,0) {$8$};
	\foreach \from/\to in {1/2,5/6,3/4,7/8}
            \path[edge] (\from) -- (\to);
	\foreach \from/\to in {1/3,5/7}
            \path[Redge] (\from) -- (\to);
	\foreach \from/\to in {2/5, 2/7, 4/5, 4/7}
            \path[Bedge, dashed] (\from) -- (\to);
\end{tikzpicture}
\end{center}
There are 6 indecomposable \bform s obtained from this way:
$1256|34|78:57|13$, $1278|34|56:57|13$, $12|3456|78:57|13$, $12|3478|56:57|13$, $1256|3478:57|\ul{13}$, $1278|3456:57|\ul{13}$. 
All of them can be found in Theorem \ref{thm: Bform n=8}.  Note that a parameter is added in each of the last two cases for the general forms. 
The direct sum $A_1\oplus A_2$ is the only non-indecomposable \bform\ given by Theorem \ref{thm: combine Bforms} here.
\end{ex}

\begin{ex}
Let $A_1\in Q_1U_6$ and  $A_2\in Q_2U_3$  be \Bform s of the graph types  $1 2| 3 6| 4 5 :  13 | 14$ and $13|2:\emptyset$, respectively.
The \bform s $\mtx{A_1 &Q_{12}\\ &A_2}$ constructed in Theorem \ref{thm: combine Bforms} are constructed by adding the following possible dashed arcs to
the graph $G_{A_1}\cup (G_{A_2}+6)$ such that each vertex is on at most one such arc:
\begin{center}
\begin{tikzpicture}
\node (1) at (0,1.6) {$1$};  
\node (2) at (0.8,1.6) {$2$};
\node (3) at (1.6,0.8) {$3$};
\node (4) at (2.4,0) {$4$}; 
\node (5) at (3.2,0) {$5$};
\node (6) at (4.0,0.8) {$6$};
\node (7) at (5.2,1.2) {$7$};
\node (8) at (6,0) {$8$};
\node (9) at (6.8,1.2) {$9$};
	\foreach \from/\to in {1/2,3/6,4/5,7/9} 
            \path[edge] (\from) -- (\to);
	\foreach \from/\to in {1/3,1/4} 
            \path[Redge] (\from) -- (\to);
	\foreach \from/\to in {2/7, 2/8, 6/7, 6/8, 5/7,5/8} 
            \path[Bedge, dashed] (\from) -- (\to);
\end{tikzpicture}
\end{center}
There are 6 indecomposable \bform s constructed in this way: $1279|368|45:13|14$, $1279|36|458:13|14$, 
$128|3679|45:13|14$, $128|36|4579:13|14$, $12|3679|458:13|14$, $12|368|4579:13|14$. 

Similarly, the \bform s $\mtx{A_2 &Q_{12}'\\ &A_1}$ constructed in Theorem \ref{thm: combine Bforms} are constructed by adding the following possible dashed arcs to
the graph $G_{A_2}\cup (G_{A_1}+3)$ such that each vertex is on at most one such arc:
\begin{center}
\begin{tikzpicture}
\node (1) at (-3.2,1.2) {$1$};
\node (2) at (-2.4,0) {$2$};
\node (3) at (-1.6,1.2) {$3$};
\node (4) at (0,1.6) {$4$};  
\node (5) at (0.8,1.6) {$5$};
\node (6) at (1.6,0.8) {$6$};
\node (7) at (2.4,0) {$7$}; 
\node (8) at (3.2,0) {$8$};
\node (9) at (4.0,0.8) {$9$};
	\foreach \from/\to in {1/3,4/5,6/9,7/8} 
            \path[edge] (\from) -- (\to);
	\foreach \from/\to in {4/6,4/7} 
            \path[Redge] (\from) -- (\to);
	\foreach \from/\to in {2/4,2/6,2/7,3/4,3/6,3/7} 
            \path[Bedge, dashed] (\from) -- (\to);
\end{tikzpicture}
\end{center}
There are also 6 indecomposable \bform s constructed in this way:
$1345|269|78:46|47$, $1345|278|69:46|47$, 
$1369|245|78:46|47$, $1369|278|45:46|47$, 
$1378|245|69:46|47$, $1378|269|45:46|47$.
\end{ex}

\subsection{The $B_n$-similarity of upper triangular matrices} 

Sylveseter's theorem (cf. \cite[Theorem 2.4.4.1]{Horn2013}) says that if 
$M\in M_p$ and $N\in M_q$ have no eigenvalue in common, then
 the equation $MX-XN=R$ has a unique solution $X\in M_{p,q}$ for each $R\in M_{p,q}$, that is,
\beq
\mtx{M &R\\0 &N}=\mtx{I_p &X\\0 &I_q}^{-1}\mtx{M &0\\0 &N}\mtx{I_p &X\\0 &I_q}\SIM{U_{p+q}}\mtx{M &0\\0 &N}.
\eeq
Similarly, every $n\times n$ upper triangular matrix $A=[a_{ij}]$ is $B_n$-similar to the matrix $C=[c_{ij}]$ such that
$c_{ij}=a_{ij}$ if $a_{ii}=a_{jj}$, and $c_{ij}=0$ otherwise;  $C$ is permutation similar to a direct sum of  matrices of the form $\lambda I_{k}+C'$ for $k\in [n]$ and
$C'\in N_{k}$. 
See \cite{Roitman1978} or   \cite[Section 1]{Thijsse1997} for more details.
The $B_n$-similarity problem of  upper triangular $A$ is transformed to the  upper triangular similarity problems of nilpotent upper triangular  matrices. 

In \cite[Theorem 1.5]{Thijsse1997}, Thijsse showed that if an $n\times n$ upper triangular matrix $A$ satisfies one of the following two conditions:
\begin{enumerate}
\item $A$ is nonderogatory;
\item $\dim \ker(A-\lambda I)^2=\dim \ker(A-\lambda I)^3$ for each $\lambda\in\C$.
\end{enumerate}
Then   $A$ is $B_n$-similar to a matrix which is permutation similar to a direct sum of Jordan blocks. 
We provide a new proof here. 
By the argument in the preceding paragraph, it suffices to consider
the case $A\in N_n$: 
\begin{itemize}
\item Condition (1) means that $A\in JU_n$ where $J$ is the nilpotent Jordan block of size $n$. The only \bform \   in $JU_n$ is $J$.
\item
Condition (2) means that each Jordan block of $A$ has size no more than 2. So $A\SIM{B_n} A'\in QU_n$ in which $Q\in N_n$ is a subpermutation, $A^2=(A')^2=0$, 
and each extra arc $(i,j)\in E_{A'}\setminus E_{Q}$ of $A'$ has $j$ a chain head of a chain of $G_Q$. So $(i,j)$ is not in \Bform \ of $A$. Therefore, \Bform\ of $A$ is exactly $Q$, which is permutation similar to a direct sum of nilpotent Jordan blocks of sizes one or two.
\end{itemize}

Another observation is \cite[2.5.P49]{Horn2013} which states that: an  $n\times n$ upper triangular matrix $A$ is similar to a diagonal matrix if and only if it is $B_n$-similar to a diagonal matrix.  
In particular,  if $A$ has distinct diagonal entries, then $A$ is $B_n$-similar to its diagonal. 
However,  the result is not quite useful for  nilpotent upper triangular matrices since the only diagonalizable nilpotent upper triangular matrix is the zero matrix.

\section*{Appendix}

As a complement to Theorem \ref{thm: Bform n=8}, the list of indecomposable \bform s in $N_n$ under the $B_n$-simlarity for $n=8$  is as follows (481 forms in 245 subpermutations):
\begin{multicols}{3}
\noindent
$1 2 3 4 5 6 7 8:$ $\emptyset, $ \newline
             $1 2 | 3 4 5 6 7 8:$ $1 3,$ \newline
             $1 4 5 6 7 8 | 2 3:$ $2 4,$ \newline
         $1 2 5 6 7 8 | 3 4:$ $35|\ul{13}, 35, 13,$ \newline
     $1 2 3 6 7 8 | 4 5:$ $46|\ul{24}|\ul{14},$ $46|\ul{24},$ $46,$ $24,$ \newline
 $1 2 3 4 7 8|5 6:$ $57 | \ul{35}, 57, 35,$ \newline
             $1 2 3 4 5 8 | 6 7:$ $4 6,$ \newline
             $1 2 3 4 5 6 | 7 8:$ $5 7,$ \newline
         $1 2 3 | 4 5 6 7 8:$ $2 4, 1 4,$ \newline
             $1 2 4 | 3 5 6 7 8:1 3,$ \newline
             $1 2 5 | 3 4 6 7 8:$ $1 3,$ \newline
             $1 2 6 | 3 4 5 7 8:$ $1 3,$ \newline
             $1 2 7 | 3 4 5 6 8:$ $1 3,$ \newline
             $1 2 8 | 3 4 5 6 7:$ $1 3,$ \newline
             $1 3 4 | 2 5 6 7 8:$ $3 5,$ \newline
         $1 4 5 | 2 3 6 7 8:$ $46|\ul{24},$ $46,$ $24,$ \newline
     $1 5 6 | 2 3 4 7 8:$ $57|\ul{35}, 57, 35,$ \newline
             $1 6 7 | 2 3 4 5 8:$ $4 6,$ \newline
             $1 7 8 | 2 3 4 5 6:$ $5 7,$ \newline
         $1 5 6 7 8| 2 3 4:$ $3 5, 25,$ \newline
             $1 4 6 7 8 | 2 3 5 :$ $ 2 4,$ \newline
             $1 4 5 7 8 | 2 3 6 :$ $2 4,$ \newline
             $1 4 5 6 8 | 2 3 7:$ $2 4,$ \newline
             $1 4 5 6 7 | 2 3 8:$ $2 4,$ \newline
             $1 3 6 7 8 | 2 4 5:$ $4 6,$ \newline
         $1 3 4 7 8 | 2 5 6:$ $57|\ul{35},$ $57,$ $35,$ \newline
             $ 1 3 4 5 8 | 2 6 7 :$ $4 6,$ \newline
             $ 1 3 4 5 6 | 2 7 8 :$ $5 7,$ \newline
     $1 2 6 7 8 | 3 4 5:$ $46|\ul{13},$ $46,$ $36|\ul{13},$ $36,$ $13,$ \newline
         $1 2 5 7 8 | 3 4 6:$ $35|\ul{13}, 35, 13,$ \newline
         $1 2 5 6 8 | 3 4 7:$ $35|\ul{13}, 35, 13,$ \newline
         $1 2 5 6 7 | 3 4 8:$ $35|\ul{13}, 35, 13,$ \newline
         $1 2 4 7 8 | 3 5 6:$ $57|\ul{13}, 57, 13,$ \newline
             $1 2 4 5 8 | 3 6 7:$ $4 6,$ \newline
             $1 2 4 5 6 | 3 7 8:$ $5 7,$ \newline
     $1 2 3 7 8 | 4 5 6:$ $57|\ul{24},$ $57|\ul{14},$ $57,$ $24,$ $14,$ \newline
     $1 2 3 6 8 | 4 5 7:$ $2 4,$ \newline
     $1 2 3 6 7 | 4 5 8:$ $46|\ul{24}, 46, 24,$ \newline
             $1 2 3 5 8 | 4 6 7:$ $2 4,$ \newline
             $1 2 3 5 6 | 4 7 8:$ $5 7,$ \newline
         $1 2 3 4 8 | 5 6 7:$ $3 5, 2 5,$ \newline
             $1 2 3 4 7 | 5 6 8:$ $3 5,$ \newline
             $1 2 3 4 6 | 5 7 8:$ $3 5,$ \newline
         $1 2 3 4 5 | 6 7 8:$ $4 6, 3 6,$ \newline
     $1 2 3 4 | 5 6 7 8:$ $35, 25, 15,$ \newline
         $1 2 3 5 | 4 6 7 8:$ $24, 14,$ \newline
         $1 2 3 6 | 4 5 7 8:$ $24, 14,$ \newline
         $1 2 3 7 | 4 5 6 8:$ $24, 14,$ \newline
         $1 2 3 8 | 4 5 6 7:$ $24, 14,$ \newline
         $1 2 4 5 | 3 6 7 8:$ $4 6, 13,$ \newline
             $1 2 4 6 | 3 5 7 8:$ $1 3,$ \newline
             $1 2 4 7 | 3 5 6 8:$ $1 3,$ \newline
             $1 2 4 8 | 3 5 6 7:$ $1 3,$ \newline
     $1 2 5 6 | 3 4 7 8:$ $57|\ul{35}|\ul{13},$ $57|\ul{35},$ $57|\ul{13},$ $57,$ $35|\ul{13},$  $35,$ $13,$ \newline
             $1 2 5 7 | 3 4 6 8:$ $1 3,$ \newline
             $1 2 5 8 | 3 4 6 7:$ $1 3,$ \newline
         $1 2 6 7 | 3 4 5 8:$ $4 6|\ul{13}, 46, 13,$ \newline
             $1 2 6 8 | 3 4 5 7:$ $1 3,$ \newline
         $1 2 7 8 | 3 4 5 6:$ $57|\ul{13}, 57, 13,$ \newline
         $1 3 4 5 | 2 6 7 8:$ $4 6,  3 6,$ \newline
             $1 3 4 6 | 2 5 7 8:$ $3 5,$ \newline
             $1 3 4 7 | 2 5 6 8:$ $3 5,$ \newline
             $1 3 4 8 | 2 5 6 7:$ $3 5,$ \newline
             $1 3 5 6 | 2 4 7 8:$ $5 7,$ \newline
             $1 3 6 7 | 2 4 5 8:$ $4 6,$ \newline
             $1 3 7 8 | 2 4 5 6:$ $5 7,$ \newline
         $1 4 5 6 | 2 3 7 8:$ $57|\ul{24}, 5 7, 24,$ \newline
             $1 4 5 7 | 2 3 6 8:$ $2 4,$ \newline
         $1 4 5 8 | 2 3 6 7:$ $4 6|\ul{24}, 46, 24,$ \newline
             $1 4 6 7 | 2 3 5 8:$ $2 4,$ \newline
             $1 4 7 8 | 2 3 5 6:$ $5 7,$ \newline
         $1 5 6 7 | 2 3 4 8:$ $3 5, 25,$ \newline
             $1 5 6 8 | 2 3 4 7:$ $3 5,$ \newline
             $1 5 7 8 | 2 3 4 6:$ $3 5,$ \newline
         $1 6 7 8 | 2 3 4 5:$ $4 6, 36,$ \newline
    $1 2 | 3 4 | 5 6 7 8:$ $35|13, 13|15,$ \newline
        $1 2 | 3 5 | 4 6 7 8:$ $1 3 | 1 4,$ \newline
        $1 2 | 3 6 | 4 5 7 8:$ $1 3 | 1 4,$ \newline
        $1 2 | 3 7 | 4 5 6 8:$ $1 3 | 1 4,$ \newline
        $1 2 | 3 8 | 4 5 6 7:$ $1 3 | 1 4,$ \newline
    $1 2  | 3 6 7 8 | 4 5:$ $46|13|\ul{14},$ $46|13,$ $46|14,$ $13|14,$ \newline
$1 2 | 3 4 7 8 | 5 6:$ $57|\ul{35}|13,$ $57|\ul{35}|15,$ $57|13,$ $57|15,$ $35|13,$ $35|15,$ \newline
    $1 2 | 3 4 5 8 | 6 7:$ $46|13, 46|16,$ \newline
    $1 2 | 3 4 5 6 | 7 8:$ $5 7|13,$ \newline
$1 2 | 3 4 5 | 6 7 8:$ $46|13, 36|13, 36|16,$ \newline
    $1 2 | 3 4 6 | 5 7 8:$ $35|13, 35|15,$ \newline
    $1 2 | 3 4 7 | 5 6 8:$ $35|13, 35|15,$ \newline
    $1 2 | 3 4 8 | 5 6 7:$ $35|13, 35|15, 13|15,$ \newline
    $1 2 | 3 5 6 | 4 7 8:$ $57|13, 57|14,$ \newline
        $1 2 | 3 5 8 | 4 6 7:$ $1 3 | 1 4,$ \newline
    $1 2 | 3 6 7 | 4 5 8:$ $46|13|\ul{14},$ $46|13,$ $46|14,$ $13|14,$ \newline
        $1 2 | 3 6 8 | 4 5 7:$ $1 3 | 1 4,$ \newline
    $1 2 | 3 7 8 | 4 5 6:$ $57|13|\ul{14},$ $57|13,$ $57|14,$ $13|14,$ \newline
        $1 3 | 2 6 7 8 | 4 5:$ $4 6| 14, $ \newline
    $1 3 | 2 4 7 8 | 5 6:$ $57|\ul{25}|15,$ $57|15,$ $25|15,$ \newline
        $1 3 | 2 4 5 8 | 6 7:$ $4 6|1 6,$ \newline
    $1 3 | 2 4 5 | 6 7 8:$ $2 6|16,$ \newline
        $1 3 | 2 4 6 | 5 7 8:$ $2 5 | 1 5,$ \newline
        $1 3 | 2 4 7 | 5 6 8:$ $2 5 | 1 5,$ \newline
        $1 3 | 2 4 8 | 5 6 7:$ $2 5 | 1 5,$ \newline
        $1 3 | 2 5 6 | 4 7 8:$ $5 7 | 1 4,$ \newline
        $1 3 | 2 6 7 | 4 5 8:$ $4 6 | 1 4,$ \newline
        $1 3 | 2 7 8 | 4 5 6:$ $5 7 | 1 4,$ \newline
        $1 4 | 2 3 | 5 6 7 8:$ $2 5 | 1 5,$ \newline
    $1 4 | 2 3 7 8 | 5 6:$ $57|\ul{25}|15,$ $57|15,$ $25|15,$ \newline
        $1 4 | 2 3 5 8 | 6 7:$ $3 6 | 1 6,$ \newline
    $1 4 | 2 3 5 | 6 7 8:$ $2 6| 1 6,$ \newline
        $1 4 | 2 3 6 | 5 7 8:$ $2 5 | 1 5,$ \newline
        $1 4 | 2 3 7 | 5 6 8:$ $2 5 | 1 5,$ \newline
        $1 4 | 2 3 8 | 5 6 7:$ $2 5 | 1 5,$ \newline
        $1 5 | 2 3 4 8 | 6 7:$ $3 6 | 1 6,$ \newline
    $1 5 | 2 3 4 | 6 7 8:$ $3 6|16, 26|16,$ \newline
        $1 6 | 2 3 4 5 | 7 8:$ $4 7 | 17,$ \newline
    $1 6 7 8 | 2 3 | 4 5:$ $46|2 4, 2 6|24,$ \newline
        $1 5 7 8 | 2 3 | 4 6:$ $2 4 | 2 5,$ \newline
        $1 5 6 8 | 2 3 | 4 7:$ $2 4 | 2 5,$ \newline
        $1 5 6 7 | 2 3 | 4 8:$ $2 4 | 2 5,$ \newline
$1 4 7 8 | 2 3 | 5 6:$ $57|24|\ul{25}|\ul{15},$ $57|24|\ul{25},$ $57|24|\ul{15},$ $57|24,$ $57|25|\ul{15},$ $57|25,$ $24|25|\ul{15},$ $24|25,$ $24|15,$ $25|15,$ \newline
    $1 4 6 8 | 2 3 | 5 7:$ $24|15,$ \newline
    $1 4 6 7 | 2 3 | 5 8:$ $2 4| 1 5,$ \newline
$1 4 5 8 | 2 3 | 6 7:$ $46|24|\ul{16},$ $46|24,$ $46|26,$ $24|16,$ \newline
    $1 4 5 7 | 2 3 | 6 8:$ $2 4 | 1 6,$ \newline
$1 4 5 6 | 2 3 | 7 8:$ $57|24, 24|17,$ \newline
$1 4 5 | 2 3 | 6 7 8:$ $4 6|24,$ $24|26|\ul{16},$ $24|26,$ $24|16,$ $26|16,$ \newline
    $1 4 6 | 2 3 | 5 7 8:$ $24|25|\ul{15},$ $24|25,$ $24|15,$ $25|15,$ \newline
    $1 4 7 | 2 3 | 5 6 8:$ $24|25|\ul{15},$ $24|25,$ $24|15,$ $25|15,$ \newline
    $1 4 8 | 2 3 | 5 6 7:$ $24|25|\ul{15},$ $24|25,$ $24|15,$ $25|15,$ \newline
    $1 5 6 | 2 3 | 4 7 8:$ $57|24|\ul{25},$ $57|24,$ $57|25,$ $24|25,$ \newline
    $1 6 7 | 2 3 | 4 5 8:$ $46|24, 46|26,$ \newline
    $1 7 8 | 2 3 | 4 5 6:$ $57|24,$ \newline
    $1 3 7 8 | 2 4 | 5 6:$ $57|25,$ \newline
        $1 3 5 8 | 2 4 | 6 7:$ $36|26,$ \newline
        $1 5 6 | 2 4 | 3 7 8:$ $57|25,$ \newline
        $1 6 7 | 2 4 | 3 5 8:$ $36|26,$ \newline
        $1 6 7 8 | 2 5 | 3 4:$ $3 6 | 2 6,$ \newline
        $1 3 4 8 | 2 5 | 6 7:$ $3 6 | 2 6,$ \newline
    $1 3 4 | 2 5 | 6 7 8:$ $36|26,$ \newline
        $1 6 7 | 2 5 | 3 4 8:$ $36|26,$ \newline
        $1 3 4 5 | 2 6 | 7 8:$ $4 7 | 2 7,$ \newline
        $1 7 8 | 2 6 | 3 4 5:$ $4 7 | 2 7,$ \newline
           $1 2 7 8 | 3 4 | 5 6:$ $57|35|\ul{13},$ $57|35,$ $57|13|\ul{15},$ $57|13,$ $35|13,$ $37|15,$ \newline
$1 2 6 8 | 3 4 | 5 7:$ $3 5 | 1 3,$ \newline
$1 2 6 7 | 3 4 | 5 8:$ $35|36|\ul{13},$ $35|36,$ $35|13,$  \newline
           $1 2 5 8 | 3 4 | 6 7:$ $35|36|\ul{26}|\ul{13},$ $35|36|\ul{26},$ $35|36|\ul{13},$ $35|36,$ $35|26|\ul{13},$ $35|26,$ $36|26|\ul{13},$ $36|26,$ $36|13,$ $26|13,$ \newline
           $1 2 5 7 | 3 4 | 6 8:$ $35|26|\ul{13},$ $35|26,$ $36|13,$  \newline
       $1 2 5 6 | 3 4 | 7 8:$ $57|35|\ul{13},$ $57|35,$ $57|13,$ $35|27|\ul{13},$ $35|27,$ $37|13,$ \newline
$1 2 5 | 3 4 | 6 7 8:$ $36|26|\ul{13},$ $36|26,$ $36|13,$ $26|13,$ $13|16,$  \newline
    $1 2 6 | 3 4 | 5 7 8:$ $35|13,$ $13|15,$ \newline
    $1 2 7 | 3 4 | 5 6 8:$ $35|13,$ $13|15,$ \newline
    $1 2 8 | 3 4 | 5 6 7:$ $35|13,$ $13|15,$ \newline
    $1 5 6 | 2 7 8 | 3 4:$ $5 7 | 3 5,$ \newline
        $1 5 8 | 2 6 7 | 3 4:$ $3 5 | 3 6,$ \newline
    $1 6 7 | 2 5 8 | 3 4:$ $35|36|\ul{26},$ $35|36,$ $35|26,$ $36|26,$  \newline
    $1 6 8 | 2 5 7 | 3 4:$ $3 5 | 2 6,$ \newline
$1 7 8 | 2 5 6 | 3 4:$ $57|35,$ $35|27,$ \newline
$1 2 7 8 | 3 5 | 4 6:$ $47|13|\ul{14},$ $47|13,$ $37|14,$  \newline
    $1 2 4 8 | 3 5 | 6 7:$ $36|13,$ \newline
$1 2 4 | 3 5 | 6 7 8:$ $36|13,$ \newline
        $1 2 6 | 3 5 | 4 7 8:$ $1 3 | 1 4,$ \newline
        $3 5 | 1 2 7 | 4 6 8:$ $1 3 | 1 4,$ \newline
        $3 5 | 1 2 8 | 4 6 7:$ $1 3 | 1 4,$ \newline
$1 2 7 8 | 3 6 | 4 5:$ $47|37|\ul{13}|\ul{14},$ $47|37|\ul{13},$ $47|37|\ul{14},$ $47|37,$ $47|13|\ul{14},$ $47|13,$ $37|\ul{13}|14,$ $37|14,$ $13|14,$ \newline
    $1 2 4 5 | 3 6 | 7 8:$ $4 7 | 3 7,$ \newline
        $1 2 7 | 3 6 | 4 5 8:$ $1 3 | 1 4,$ \newline
        $1 2 8 | 3 6 | 4 5 7:$ $1 3 | 1 4,$ \newline
        $1 4 5 | 2 7 8 | 3 6:$ $4 7 | 3 7,$ \newline
        $1 7 8 | 2 4 5 | 3 6:$ $4 7 | 3 7,$ \newline
        $1 2 8 | 3 7 | 4 5 6:$ $1 3 | 1 4,$ \newline
$1 2 3 8 | 4 5 | 6 7:$ $46|24,$ $46|14,$ \newline
    $ 1 2 3 7 | 4 5 | 6 8:$ $4 6 | 2 4,$ \newline
$1 2 3 6 | 4 5 | 7 8:$ $47|37|\ul{24},$ $47|37,$ $47|24,$ $37|24,$ \newline
           $1 2 3 | 4 5 | 6 7 8:$ $46|24,$ $46|14,$ $24|26,$ \newline
    $1 2 6 | 3 7 8 | 4 5:$ $47|13|\ul{14},$ $47|13,$ $47|14,$ $13|14,$ \newline
    $1 2 7 | 3 6 8 | 4 5:$ $46|13|\ul{14},$ $46|13,$ $46|14,$ $13|14,$ \newline
    $1 2 8 | 3 6 7 | 4 5:$ $46|13|\ul{14},$ $46|13,$ $46|14,$ $13|14,$ \newline
    $1 3 6 | 2 7 8 | 4 5:$ $47|37|\ul{14},$ $47|37,$ $47|14,$ $37|14,$ \newline
        $1 3 7 | 2 6 8 | 4 5:$ $4 6 | 1 4,$ \newline
        $1 3 8 | 2 6 7 | 4 5:$ $4 6 | 1 4,$ \newline
    $1 6 7 | 2 3 8 | 4 5:$ $4 6 | 2 4,$ \newline
        $1 6 8 | 2 3 7 | 4 5:$ $4 6 | 2 4,$ \newline
    $1 7 8 | 2 3 6 | 4 5:$ $47|37|\ul{24},$ $47|37,$ $47|24,$ $37|24,$ \newline
$1 2 3 5 | 4 6 | 7 8:$ $47|24,$ \newline
    $1 2 3 | 4 6 | 5 7 8:$ $24|25,$ \newline
    $1 2 5 | 3 7 8 | 4 6:$ $47|14,$ \newline
    $1 3 5 | 2 7 8 | 4 6:$ $47|14,$ \newline
    $1 7 8 | 2 3 5 | 4 6:$ $47|24,$ \newline
        $1 2 3 8 | 4 7 | 5 6:$ $2 4 | 2 5,$ \newline
    $1 2 3 | 5 6 8 | 4 7:$ $2 4 | 2 5,$ \newline
        $1 5 6 | 2 3 8 | 4 7:$ $2 4 | 2 5,$ \newline
    $1 2 3 | 4 8 | 5 6 7:$ $2 4 | 2 5,$ $24|15,$ \newline
           $1 2 3 4 | 5 6 | 7 8:$ $57|35,$ $57|25,$ \newline
           $1 2 3 | 4 7 8 | 5 6:$ $57|24|\ul{25},$ $57|24,$ $57|25,$ $57|15,$ $24|25,$ \newline
$1 2 4 | 3 7 8 | 5 6:$ $57|25,$ $57|15,$ \newline
    $1 2 7 | 3 4 8 | 5 6:$ $35|13,$ \newline
    $1 2 8 | 3 4 7 | 5 6:$ $35|13,$ $35|15,$ \newline
$1 3 4 | 2 7 8 | 5 6:$ $57|35,$ $57|15,$ \newline
        $1 3 8 | 2 4 7 | 5 6:$ $25|15,$ \newline
    $1 4 7 | 2 3 8 | 5 6:$ $24|25|\ul{15},$ $24|25,$ $24|15,$ $25|15,$ \newline
        $1 4 8 | 2 3 7 | 5 6:$ $2 5 | 1 5,$ \newline
$1 7 8 | 2 3 4 | 5 6:$ $57|35,$ $57|25,$ \newline
        $1 2 6 | 3 4 8 | 5 7:$ $3 5 | 1 3,$ \newline
    $1 4 6 | 2 3 8 | 5 7:$ $24|15,$ \newline
        $1 2 3 4 | 5 8 | 6 7:$ $3 5 | 3 6,$ \newline
    $1 2 3 | 4 6 7 | 5 8:$ $2 5 | 1 4,$ \newline
        $1 2 4 | 3 6 7 | 5 8:$ $25|13,$ \newline
        $1 3 4 | 2 6 7 | 5 8:$ $3 5 | 3 6,$ \newline
        $1 6 7 | 2 3 4 | 5 8:$ $3 5 | 3 6,$ \newline
$1 2 3 | 4 5 8 | 6 7:$ $46|24,$ $46|26|\ul{14},$ $46|26,$ $46|14,$ $26|14,$ \newline
    $1 2 4 | 3 5 8 | 6 7:$ $36|26|\ul{13},$ $36|26,$ $36|13,$ $26|13,$ \newline
    $1 2 5 | 3 4 8 | 6 7:$ $36|26|\ul{13},$ $36|26,$ $36|13,$ $26|13,$ \newline
    $1 2 8 | 3 4 5 | 6 7:$ $46|13,$ \newline
    $1 3 4 | 2 5 8 | 6 7:$ $35|36|\ul{26},$ $35|36,$ $35|26,$ $36|26,$ \newline
$1 4 5 | 2 3 8 | 6 7:$ $46|24,$ $24|16,$ \newline
    $1 5 8 | 2 3 4 | 6 7:$ $35|36|\ul{16},$ $35|36,$ $35|16,$ $36|16,$ \newline
    $1 2 3 | 4 5 7 | 6 8:$ $2 6 | 1 4,$ \newline
        $1 2 4 | 3 5 7 | 6 8:$ $2 6 | 1 3,$ \newline
        $1 2 5 | 3 4 7 | 6 8:$ $2 6 | 1 3,$ \newline
        $1 2 7 | 3 4 5 | 6 8:$ $4 6 | 1 3,$ \newline
    $1 3 4 | 2 5 7 | 6 8:$ $35|26,$ \newline
    $1 4 5 | 2 3 7 | 6 8:$ $4 6 | 2 4,$ \newline
    $1 5 7 | 2 3 4 | 6 8:$ $3 5 | 1 6,$ \newline
$1 2 3 | 4 5 6 | 7 8:$ $57|24,$ $57|14,$ $27|14,$ \newline
    $1 2 4 | 3 5 6 | 7 8:$ $57|13,$ $27|13,$ \newline
    $1 2 5 | 3 4 6 | 7 8:$ $47|13,$ $27|13,$ \newline
    $1 2 6 | 3 4 5 | 7 8:$ $47|27|\ul{13},$ $47|27,$ $47|13,$ $27|13,$ \newline
$1 3 4 | 2 5 6 | 7 8:$ $57|35,$ $35|27,$ \newline
        $1 3 6 | 2 4 5 | 7 8:$ $47|37,$ \newline
$1 4 5 | 2 3 6 | 7 8:$ $47|37|\ul{24},$ $47|37,$ $47|24,$ $37|24,$ $24|17,$ \newline
$1 4 6 | 2 3 5 | 7 8:$ $47|24,$ \newline
           $1 5 6 | 2 3 4 | 7 8:$ $57|35,$ $57|25,$ $35|17,$ \newline
       $1 2 | 3 4 | 5 6 | 7 8:$ $57|35|13,$ \newline
           $1 2 | 3 4 | 5 8 | 6 7:$ $35|36|13,$ $35|36|16,$ \newline
           $1 2 | 3 5 | 4 6 | 7 8:$ $4 7 | 1 3 | 1 4$ \newline
           $1 2 | 3 6 | 4 5 | 7 8:$ $47|37|13|\ul{14},$ $47|37|13,$ $47|37|14,$ $47|13|14,$ $37|13|14,$ \newline
$1 2 | 3 7 | 4 5 | 6 8:$ $46|13|14,$ \newline
$1 2 | 3 8 | 4 5 | 6 7:$ $46|13|14,$ \newline
   $1 2 | 3 8 | 4 7 | 5 6:$ $13|14|15,$  \newline
$1 3 | 2 4 | 5 8 | 6 7:$ $25|26|16,$ \newline
$1 3 | 2 6 | 4 5 | 7 8:$ $47|27|14,$ \newline
           $1 4 | 2 3 | 5 6 | 7 8:$ $57|25|15,$ $25|15|17,$  \newline
$1 4 | 2 3 | 5 7 | 6 8:$ $25|15|16,$ \newline
$1 4 | 2 3 | 5 8 | 6 7:$ $25|26|15|\ul{16},$ $25|26|15,$ $25|26|16,$ $25|15|16,$ $26|15|16,$  \newline
   $1 5 | 2 3 | 4 8 | 6 7:$ $24|26|16,$ \newline
$1 6 | 2 3 | 4 5 | 7 8:$ $47|24|17,$ \newline
   $1 6 | 2 5 | 3 4 | 7 8:$ $37|27|17.$ \newline
\end{multicols}

\bibliographystyle{plain} 
\bibliography{upptrisim}

\end{document}